\setlist[enumerate]{parsep=\smallskipamount}
\newtheorem{theorem}{Theorem}%  meant for continuous numbers
\newtheorem{lemma}{Lemma} 
\newtheorem{proposition}{Proposition}% to get separate numbers for theorem and proposition etc.
\newtheorem{example}{Example}%
\newtheorem{remark}{Remark}%
\newtheorem*{remarknot}{Remark}%
\newtheorem{definition}{Definition}%
\renewcommand{\epsilon}{\varepsilon}
\newcommand{\R}{\mathcal{R}}
\newcommand{\Rb}{\mathbb{R}}
\newcommand{\nR}{\nabla \mathcal{R}}
\newcommand{\critic}{\mathcal{C}}
\newcommand{\statio}{\mathcal{E}}
\newcommand{\zerodot}{\mathcal{Z}}
\newcommand{\accum}{\mathcal{A}}
\newcommand{\voisi}{\mathcal{V}}
\newcommand{\globalset}{\mathcal{G}}
\newcommand{\Frac}[2]{\displaystyle \frac{#1}{#2}\otimes }
\begin{document}

\title[An Abstract Lyapunov Control Optimizer: Local Stabilization and Global Convergence]{An Abstract Lyapunov Control Optimizer: Local Stabilization and Global Convergence}

%%=============================================================%%
%% Prefix	-> \pfx{Dr}
%% GivenName	-> \fnm{Joergen W.}
%% Particle	-> \spfx{van der} -> surname prefix
%% FamilyName	-> \sur{Ploeg}
%% Suffix	-> \sfx{IV}
%% NatureName	-> \tanm{Poet Laureate} -> Title after name
%% Degrees	-> \dgr{MSc, PhD}
%% \author*[1,2]{\pfx{Dr} \fnm{Joergen W.} \spfx{van der} \sur{Ploeg} \sfx{IV} \tanm{Poet Laureate} 
%%                 \dgr{MSc, PhD}}\email{iauthor@gmail.com}
%%=============================================================%%

\author*[1,2]{\fnm{Bilel} \sur{Bensaid}}\email{bilel.bensaid@u-bordeaux.fr}

\author[1]{\fnm{Ga\"el} \sur{Po\"ette}}\email{gael.poette@cea.fr}
\equalcont{These authors contributed equally to this work.}

\author[2]{\fnm{Rodolphe} \sur{Turpault}}\email{rodolphe.turpault@u-bordeaux.fr}
\equalcont{These authors contributed equally to this work.}

\affil*[1]{\orgdiv{CESTA-DAM}\orgname{/CEA}, \orgaddress{\city{Le Barp}, \postcode{33114},\country{France}}}

\affil[2]{\orgdiv{Institut de Mathématiques de Bordeaux, Université de Bordeaux}, \orgname{CNRS,Bordeaux INP}, \orgaddress{\city{Talence}, \postcode{33400}, \country{France}}}

%%==================================%%
%% sample for unstructured abstract %%
%%==================================%%

\abstract{Recently, many machine learning optimizers have been analysed considering them as the asymptotic limit of some differential equations\cite{variational_perspective} when the step size goes to zero. In other words, the optimizers can be seen as a finite difference scheme applied to a continuous dynamical system. But the major part of the results in the literature concerns constant step size algorithms. The main aim of this paper is to investigate the guarantees of the adaptive step size counterpart. In fact, this dynamical point of view can be used to design step size update rules, by choosing a discretization of the continuous equation that preserves its most relevant features \cite{Bilel, control1}. In this work, we analyse this kind of adaptive optimizers and prove their Lyapunov stability and convergence properties for any choice of hyperparameters. At the best of our knowledge, this paper introduces for the first time the use of continuous selection theory from general topology to overcome some of the intrinsic difficulties due to the non constant and non regular step size policies. The general framework developed gives many new results on adaptive and constant step size Momentum/Heavy-Ball \cite{Polyak} and p-GD\cite{cv_qflow} algorithms.}

\keywords{non-convex unconstrained optimization, odes, lyapunov stability, adaptive scheme, energy preservation, selection theory}

%%\pacs[JEL Classification]{D8, H51}

\pacs[MSC Classification]{90C26,39A30,34D20,65L05,54C65}

\maketitle

\section{Introduction}\label{intro}

Many Machine Learning tasks require to minimize a differentiable non-convex objective function $\R(\theta)$ defined on $\Rb^N$ where $N$ is usually the number of parameters of the model. The literature is strewn with algorithms aiming at reaching the previous goal with a range of
complexity strongly depending on the operational constraints we face (is the gradient easily available? The Hessian? The quasi-Hessian? etc.). In the following lines, we assume that $\R$ and $\nabla \mathcal{R}$ are available. This is common for many applications, notably in Neural Networks learning \cite{backpropagation}. 
Certainly, the most simple algorithm to minimize $\mathcal{R}$ is the \textbf{gradient descent (GD)} with a constant step size $\eta$ called also the learning rate (which is an hyperparameter to tune):
\begin{equation*}
\theta_{n+1} = \theta_n -\eta \nR(\theta_n).
\end{equation*}
To enhance its speed, more complex algorithms have been designed: for example, a first family of algorithms is based on adding a memory on the gradient. This gives the \textbf{Momentum algorithm also known as the Heavy-Ball optimizer} \cite{Polyak}:
\begin{equation*}
    \left\{
    \begin{array}{l}
        v_{n+1} = (1-\beta_1)v_{n} + \beta_1 \nabla \mathcal{R}(\theta_n), \\
        \theta_{n+1} = \theta_n - \eta v_{n+1},
    \end{array}
    \right.
    \label{defMomentum}
\end{equation*}
where $v_0=0$ and $\beta_1$ is a hyperparameter lying in $]0,1[$ (which may need to be tuned).
Other algorithms keep in memory the square of the gradient, like \textbf{RMSProp} \cite{RMSProp} which is widely used in Deep Learning, and whose implementation is given by:
\begin{equation*}
    \left\{
    \begin{array}{l}
        s_{n+1} = (1-\beta_2)s_n + \beta_2 \left(\nabla \mathcal{R}(\theta_n)\right)^{{\otimes} 2}, \\
        \theta_{n+1} = \theta_n -\eta \Frac{\nR(\theta_n)}{\sqrt{{s}_{n+1}+\varepsilon_a}},
    \end{array}
    \right.
    \label{defRMSProp}
\end{equation*}
where $\beta_2$ is a hyperparameter, like $\beta_1$ for Momentum. In \eqref{defRMSProp}, the operations must be understood componentwise (for the square and the division). The small parameter $\varepsilon_a$, generally set between $10^{-7}$ and $10^{-15}$, prevents division by zero.\\
Momentum and RMSProp are part of the general family of inertial algorithms like AdaGrad \cite{Adagrad} and Adam \cite{Adam}. This family is also called accelerated gradients because they are faster for convex functions and achieve the optimal convergence rate in this class of functions \cite{Nesterov,Nesterov_acceleration_ODE}. In addition to inertial algorithms, a second family based on re-scaling strategies, aims at tackling the problem of exploding gradients. Exploding gradients refer to situations in which the gradients stiffen along the training, making the iterative process unstable \cite{RNN_difficult}. For such type of strategy, the gradient is replaced by its normalization $\dfrac{\nR(\theta)}{\|\nR(\theta)\|}$ when it exceeds a certain threshold. This simple technique is called {\em gradient clipping or normalized gradient} and is widely used for training Recurrent Neural Networks \cite{RNN_difficult}. Recently a more general class of rescaled gradient descent have been suggested in \cite{variational_perspective} (theorem 3.4). The iterative process is given by, for $p>1$:
\begin{equation}
    \theta_{n+1} = \theta_n - \eta \dfrac{\nR(\theta_n)}{\|\nR(\theta_n)\|^{\frac{p-1}{p-2}}} \text{ if } \nR(\theta_n) \neq 0.
    \label{p_GD}
\end{equation}
It is called \textbf{p-gradient flow or p-gradient descent (pGD)}. The normalized gradient is obtained when $p=+\infty$. \\

The previous algorithms are often called Machine Learning(ML) {\em optimizers} in the literature. The short list above is not exhaustive but is sufficient to illustrate the results of this paper. For these optimizers, the convergence results, for the iterative process to end in a vicinity of a minimum of $\R$, are mainly limited to the convex case or to Lipshitz gradient function \cite{Nesterov, Lyapunov_Nesterov,Nesterov_acceleration_ODE}. Even in this last configuration, the results for GD hold for a time step satisfying $\eta \leq \frac{1}{L}$ where $L$ is the Lipshitz constant of $\nR$. In many optimization problems particularly when Neural Networks are involved, even if $\nR$ is Lipshitz continuous, the constant $L$ is not available \cite{Lipshitz_constant} and the limitation on the time step is of little practical use. On another hand, adaptive time step strategies are more practical and represent a third family of strategies. These consist in looking for time step $\eta_n$ satisfying the so-called descent condition or Armijo condition \cite{armijo}:
\begin{equation*}
    \R(\theta_n-\eta_n\nR(\theta_n))-\R(\theta_n) \leq -\lambda \eta_n \|\nR(\theta_n)\|^2.
\end{equation*}
There are some recent works about the convergence of the gradient algorithm (or more generally descent algorithms) under the Armijo condition for analytical functions \cite{Absil,Rondepierre}.

It is not easy to identify which of the previous algorithms (GD, Momentum, RMSProp, GD under Armijo conditions...)  is superior to the others: some are more complex, some require more computations, some need to store more quantities into memory, etc. Their capabilities can be, in a way, ranked from the properties one can expect from them. For this, we need to thoroughly analyse them. Interestingly, many optimizers can be analysed as the
discretization of an ordinary differential equation (ODE):
\begin{equation*}
y'(t) = F(y(t)),
\label{ODE}
\end{equation*}
where $F:\Rb^m \mapsto \Rb^m$ is generally considered continuous. For instance, GD can be interpreted as the explicit Euler discretization of the following flow/ODE equation:
\begin{equation*}
        \left\{
        \begin{array}{l}
        \theta(0)=\theta_0, \\
        \theta'(t) = -\nabla \mathcal{R}(\theta(t)).
        \end{array}
        \right.
        \label{ODE_GD}
\end{equation*}
Here, $m=N$, $y=\theta$ and $F(\theta)=-\nR(\theta)$.
In a similar manner, Momentum asymptotically solves the following damping system of ODEs \cite{Polyak, momentum_lyapunov, momentum_odes}:
\begin{equation*}
\left\{
  \begin{array}{ll}
       v'(t)=-\bar{\beta_1}v(t)-\bar{\beta_1}\nR(\theta(t)),  \\
       \theta'(t) = v(t),
  \end{array}
\right.
  \label{ODE_Momentum_order2}
\end{equation*}
with initial conditions $\theta(0)=\theta_0$ and $\theta'(0)=0$ where $\bar{\beta_{1}} = \frac{\beta_1}{\eta}$. For Momentum, $m=2N$, $y=(v,\theta)^T$ and:
\begin{equation*}
    F(v,\theta) = 
    \begin{pmatrix}
          -\bar{\beta_1}v-\bar{\beta_1}\nR(\theta) \\
          v
    \end{pmatrix}
    .
\end{equation*}

A powerful tool for the qualitative analysis of the ODE is Lyapunov theory \cite{Lyapunov}. Let us recall that a Lyapunov function is a functional $V:\Rb^m \mapsto \Rb^+$ that decreases along the trajectories of the ODE. More formally, its time derivative is negative:
\begin{equation*}
        \dot{V}(y) \coloneqq \nabla V (y)^T F (y)\leq 0.
\end{equation*}
This time derivative is often considered as a dissipation rate since $V$ can be seen as the total energy of the system when the ODE has a physical interpretation. The existence of such a Lyapunov function $V$ for a flow of interest gives many insights on what can be \textit{asymptotically} expected from the optimizers (i.e. as $\eta \rightarrow 0$): local stability and convergence of the trajectories (see \cite{Nesterov, variational_perspective, CV_Adam,cv_qflow} for a non-exhaustive list of examples). Indeed, one does not expect more properties for the discrete flow than for the continuous one.\\
In a recent work \cite{Bilel}, it has been noted that these \textit{asymptotical} properties are not enough in practice: these algorithms, with some common choices of hyperparameters, can generate surprising instabilities (which are not supposed to occur asymptotically). 
The risks, when coping with these instabilities, can be summed up as divergences of the optimization process, jumps in the vicinities of global/local minima and so on, for an overall loss of computational time or of performance/accuracy.
In \cite{Bilel}, Lyapunov functions $V$ of several continuous flows of some optimizers have been identified and used during the discrete process: by selecting the time step {$\eta$} in order to decrease $V$ in the discrete field, i.e. such that $V(y_{n+1})-V(y_n) \leq 0$, the authors have empirically observed a stabilization of the trajectories. One of the aims of this paper is provide some theoretical justification to go beyond these observations and the use of the Lyapunov function $V$ during the (discrete) optimization. 

In \cite{Bilel}, the cornerstone of the analysis is the decreasing of $V$ but it seems natural to also preserve the dissipation rate/speed of the continuous equation in order to deduce more qualitative properties. So we may want to enforce the equality $V(y_{n+1})-V(y_n) = \eta_n \dot{V}(y_n)$ which can be viewed as a first order approximation of $\dot{V}$:
\begin{equation*}
    \displaystyle{\lim_{\eta \to 0}} \dfrac{V\bigl(y+\eta F(y)\bigr)-V(y)}{\eta} = \dot{V}(y).
\end{equation*}
Enforcing the equality is however difficult in practice and leads to the resolution of many non-linear systems: see for example the projection methods in \cite{projective_lyapunov}. 
Therefore we here choose to preserve a weak form of the dissipation rate up to a constant: 
\begin{equation}
V(y_{n+1})-V(y_n) \leq \lambda \eta_n \dot{V}(y_n),
\label{dissipation_inequality}
\end{equation}
where $\lambda$ is a real hyperparameter in $]0,1[$.
This can be seen as a \textbf{generalization to arbitrary optimizers (with an identified Lyapunov function) of the Armijo condition} defined for descent methods like GD. \\
 %From now on, we have an iterative formula for the parameter $y$ (Euler discretization for simplification), $y_{n+1}=y_n+\eta_n F(y_n)$ and a constraint on the time step $\eta_n$. It remains to build a learning rate that satisfies the condition \eqref{dissipation_inequality}. 
From now on, it remains to discretize the ODE \eqref{ODE} while respecting the qualitative (or physical) constraint \eqref{dissipation_inequality}. Let us present two practical implementations aiming at enforcing this inequality with algorithms \ref{algo_LCR} (called LCR as Lyapunov Control Restart) and \ref{algo_LCM} (called LCM as Lyapunov Control Memory).

\begin{algorithm}[ht]
   \caption{Optimization by Lyapunov Control with time step restart (LCR)}
   \begin{algorithmic}
       \REQUIRE initial values $y_0$, $\eta_{init}$, $f_1>1$ and $\epsilon>0$.
       
 \STATE $\dot{V} \leftarrow \dot{V}(y_0)$
       
       \WHILE{$|\dot{V}|>\epsilon$}
       \STATE $V_0 \leftarrow V(y)$
        \STATE $\dot{V} \leftarrow \dot{V}(y)$
        \STATE $y_0 \leftarrow y$
        \REPEAT
            \STATE $y \leftarrow y + \eta F(y)$
            \STATE $V \leftarrow V(y)$
            \IF{$V-V_0>\lambda \eta \dot{V}$}
                \STATE $\eta \leftarrow \frac{\eta}{f_1}$
                \STATE $y \leftarrow y_0$
            \ENDIF
        \UNTIL{$V-V_0\leq \lambda \eta \dot{V}$}
        \STATE $\eta \leftarrow \eta_{init}$
        \STATE $n \leftarrow n+1$
       \ENDWHILE
   \end{algorithmic}
   \label{algo_LCR}
\end{algorithm}

\begin{algorithm}[ht]
   \caption{Optimization by Lyapunov Control with time step memory (LCM)}
   \begin{algorithmic}
       \REQUIRE initial values $y_0$, $\eta_{init}$, $f_1>1$, $f_2>1$ and 
       $\epsilon>0$.

 \STATE $\dot{V} \leftarrow \dot{V}(y_0)$
       
       \WHILE{$|\dot{V}|>\epsilon$}
       \STATE $V_0 \leftarrow V(y)$
        \STATE $\dot{V} \leftarrow \dot{V}(y)$
        \STATE $y_0 \leftarrow y$
        \REPEAT
            \STATE $y \leftarrow y + \eta F(y)$
            \STATE $V \leftarrow V(y)$
            \IF{$V-V_0>\lambda \eta \dot{V}$}
                \STATE $\eta \leftarrow \frac{\eta}{f_1}$
                \STATE $y \leftarrow y_0$
            \ENDIF
        \UNTIL{$V-V_0 \leq \lambda \eta \dot{V}$}
        \STATE $\eta \leftarrow f_2 \eta$
        \STATE $n \leftarrow n+1$
       \ENDWHILE
   \end{algorithmic}
   \label{algo_LCM}
\end{algorithm}

In both algorithms, we use an explicit Euler scheme to discretize the equation (for simplification) and we are looking for a time step that decreases the Lyapunov function at an appropriate rate. The reduction is done in practice thanks to the constant factor $f_1$. The algorithms can be summed up by the following constraint equation:
\begin{equation*}
        y_{n+1} = y_n + \eta_n F(y_n),
        \label{Euler}
\end{equation*}
where $\eta_n$ is chosen to verify \eqref{dissipation_inequality}.
The differences between the two algorithms may seem negligible: in algorithm LCR (algorithm\#\ref{algo_LCR}) the time step takes a fixed value $\eta_{init}$ before proceeding to a linesearch whereas in algorithm LCM (algorithm\#\ref{algo_LCM}), the previous time step is used multiplied by a factor $f_2$. Although this change seems insignificant, the LCM version seems more efficient in practice \cite{Rondepierre} but its analysis is much more challenging, this will be developped in the following sections.  
Similar backtracking algorithms were suggested for GD in \cite{Rondepierre}. More recently, the LCM version of the algorithm in its general form was proposed in the control field in \cite{control2} to discretize an ODE with one global equilibrium in order to preserve its asymptotic behavior. Contrary to constant time step algorithms, these optimizers with updating strategies (LCR and LCM) look for a time step lying in the set (resolution of an inequality at each iteration):
\begin{equation}
    I(y) = \{\eta>0, f(y,\eta)\leq 0\}.
    \label{I}
\end{equation}
where the function $f$ is defined on $\Rb^m \times \Rb_+$ as follows:
\begin{equation}
    f(y,\eta) = V(y+\eta F(y))-V(y)-\lambda \eta \dot{V}(y).
    \label{f}
\end{equation}

The {\bf aim of this paper is to provide guarantees for such updating strategies in the non-convex setting (multiple equilibriums)} and to show that the fundamental condition \eqref{dissipation_inequality} makes it possible to preserve several good features of the continuous time equation (ODE). \\
The paper is organized as follows. First, section \ref{limit_points} deals with the localization of the accumulation points of the sequence $(y_n)_{n \in \mathbb{N}}$ generated by algorithms LCR and LCM. In particular, we prove that they satisfy a weak version of the LaSalle's ODE principle \cite{LaSalle}. In this section, a fundamental and new tool for analysing adaptive optimizers is presented by applying \textbf{selection theory} for multi-applications. Then, in section \ref{section_stability}, we prove a discrete stability theorem with the same hypothesis as the classical Lyapunov theorem \cite{Lyapunov} for ODEs. These hypotheses are weakened compared to stability results for this kind of algorithm, see \cite{Absil}. Finally, section \ref{section_cv} presents a general convergence framework for these updating strategies with an interesting application to rescaled gradients. Through the different sections, the theoretical results are illustrated on some classical machine learning optimizers.  

\section{The difficulty of the limit points}
\label{limit_points}

In this section, we investigate on the set in which the limit of the sequence $(y_n)_{n\in\mathbb{N}}$ produced by algorithms LCR or LCM belongs, when it exists. More generally, the question is: where are the accumulation points (limits of subsequences of $(y_n)_{n\in\mathbb{N}}$) located ? We introduce the set of stationary points for the general ODE:
\begin{equation*}
    \statio \coloneqq \{y \in \Rb^m, F(y)=0\},
\end{equation*}
 and the points that cancel $\dot{V}$
\begin{equation*}
    \zerodot \coloneqq \{y \in \Rb^m, \dot{V}(y)=0\},
\end{equation*}
for which we have $\statio\subset\zerodot$.
Besides, the set of critical points of $\R$ is the set 
\begin{equation*}
    \critic_\R \coloneqq \{\theta \in \Rb^N, \nR(\theta)=0\}.
\end{equation*}
Finally, in the discrete setting let us introduce the set of accumulation points of a sequence $(y_n)_{n\in \mathbb{N}}$: %starting in $y_0$:
\begin{equation*}
    \accum \coloneqq \displaystyle{\bigcap_{p\in \mathbb{N}}} \overline{\{y_n, n\geq p\}}.
\end{equation*}
Depending on the optimizer, it is not always obvious that the accumulation points of $(y_n)_{n\in\mathbb{N}}$ intersects with $\statio$ or $\zerodot$. The object of this section is to study the inclusions of the different sets for the optimizers LCR and LCM.\\

Let us present some difficulties encountered when studying the accumulation points, first in the particular case of GD. In continuous time, the Lasalle invariance's principle \cite{LaSalle}, applied with $V=\R$, gives in particular that for each initial value $\theta_0 \in \Rb^N$, if $\theta(t)$ converges as $t$ goes to infinity then $\displaystyle{\lim_{t \to +\infty}} \theta(t) \in \zerodot=\statio$. In the discrete setting, when the time step is constant, the same property holds, that is to say, if $(\theta_n)_{n\in\mathbb{N}}$ converges then $\displaystyle{\lim_{n\to +\infty}}\theta_n \in \statio$: indeed,  taking the limit in \eqref{Euler} with $\eta_n=\eta>0$ leads to $\eta \nR(\theta_\infty)=0$, hence $\nR(\theta_\infty)=0$ {so that $\theta_{\infty} \in \critic_\R = \statio$}. In the general case where $(\eta_n)_{n\in\mathbb{N}}$ is not constant, this is much less straightforward. In the same way, we get $\eta_n \nR(\theta_n) \to 0$ but it is possible that $\eta_n \to 0$ and $\nR(\theta_n) \nrightarrow 0$. In \cite{Rondepierre}, this problem is solved, for GD, by assuming the gradient is globally Lipschitz for LCM and is only continuously differentiable for LCR. In the next lines, we begin by generalizing this result for a locally Lipschitz gradient for LCM.\\

\begin{proposition}[GD limit set]
    Let $\R$ be differentiable and assume its gradient is locally Lipschitz. Consider the sequence $(\theta_n)_{n\in\mathbb{N}}$ generated by the algorithm LCM with $F=\nR$ and $V=\R$ and assume that $(\theta_n)_{n\in\mathbb{N}}$ is bounded. Then the set of accumulation points $\accum$ of the sequence $(\theta_n)_{n\in\mathbb{N}}$ (limits of subsequences) is included in $\mathcal{E}=\critic_\R$. 
    \label{LCEGD_accumulation}
\end{proposition}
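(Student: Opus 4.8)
The plan is to exploit that, for GD, the condition \eqref{dissipation_inequality} is nothing but the Armijo condition. With $V=\R$ one has $\dot V(\theta)=-\|\nR(\theta)\|^2$ and the update reads $\theta_{n+1}=\theta_n-\eta_n\nR(\theta_n)$, so \eqref{dissipation_inequality} becomes $\R(\theta_{n+1})-\R(\theta_n)\le-\lambda\eta_n\|\nR(\theta_n)\|^2$. First I would record the easy consequences. The sequence $(\R(\theta_n))_n$ is non-increasing; since $(\theta_n)_n$ is bounded it lies in a compact set $K$ on which $\R$ is bounded below, so $\R(\theta_n)$ converges. Telescoping the Armijo inequality then gives $\lambda\sum_n\eta_n\|\nR(\theta_n)\|^2\le\R(\theta_0)-\lim_n\R(\theta_n)<+\infty$, whence $\eta_n\|\nR(\theta_n)\|^2\to 0$. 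Because $\nR$ is locally Lipschitz and $K$ compact, $\nR$ is Lipschitz with a single constant $L$ on a closed $\delta$-neighborhood $K_\delta$ of $K$; this uniform constant is what lets me invoke the descent lemma $\R(\theta-s\nR(\theta))-\R(\theta)\le -s(1-\tfrac{Ls}{2})\|\nR(\theta)\|^2$, valid as long as the segment $[\theta,\theta-s\nR(\theta)]$ stays in $K_\delta$.

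Next I argue by contradiction. Let $\theta^\star\in\accum$ with $\theta_{n_k}\to\theta^\star$ and suppose $\nR(\theta^\star)\ne 0$. By continuity $\|\nR(\theta_{n_k})\|\ge c>0$ for large $k$, so $\eta_n\|\nR(\theta_n)\|^2\to 0$ forces $\eta_{n_k}\to 0$. Here lies the whole difficulty of LCM: a small accepted step need not come from a local backtracking at $\theta_{n_k}$, since it is inherited from the previous iteration and merely rescaled by $f_2$. The key structural remark is that, between two successive backtrackings, the accepted step is multiplied by $f_2>1$, hence strictly grows. I would therefore let $m_k\le n_k$ be the last index up to $n_k$ at which the inner loop actually reduced the step (such an index exists for large $k$, since otherwise $\eta_{n_k}=f_2^{n_k}\eta_{init}\to+\infty$). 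By the growth-between-backtrackings remark, $\eta_{m_k}\le\eta_{n_k}\to 0$. On the other hand, at $m_k$ the trial step $f_1\eta_{m_k}$ was rejected at the base point $\theta_{m_k}\in K$, i.e. $\R(\theta_{m_k}-f_1\eta_{m_k}\nR(\theta_{m_k}))-\R(\theta_{m_k})>-\lambda f_1\eta_{m_k}\|\nR(\theta_{m_k})\|^2$. Since $\eta_{m_k}\to 0$ and $\|\nR\|$ is bounded on $K$, the corresponding segment eventually lies in $K_\delta$; comparing the rejected inequality with the descent lemma and dividing by $f_1\eta_{m_k}\|\nR(\theta_{m_k})\|^2>0$ (licit, because a reduction forces $\nR(\theta_{m_k})\ne 0$) yields the uniform lower bound $\eta_{m_k}>\frac{2(1-\lambda)}{f_1 L}$. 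This contradicts $\eta_{m_k}\to 0$, so $\nR(\theta^\star)=0$ and $\accum\subset\critic_\R=\statio$.

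The main obstacle is precisely the step-size memory of LCM, which breaks the clean one-iteration argument available for LCR (where the search restarts from $\eta_{init}$ and the accepted step is bounded below directly). The resolution pairs two facts: the combinatorial monotonicity of the step between backtrackings, which transports the smallness of $\eta_{n_k}$ back to a genuine backtracking index $m_k$; and the fact that the uniform Lipschitz bound from compactness holds at \emph{every} point of $K$, so it applies at $\theta_{m_k}$ even though $\theta_{m_k}$ need not be close to $\theta^\star$. I would take care throughout that the descent lemma is only used on segments contained in $K_\delta$, which is guaranteed once $\eta_{m_k}$ is small and $\nR$ is bounded on $K$.
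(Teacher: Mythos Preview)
Your argument is correct, but it is organized differently from the paper's proof. Both proofs rest on the same descent-lemma observation that any trial step $\eta\le \eta^\ast\coloneqq \tfrac{2(1-\lambda)}{L}$ satisfies the Armijo condition. From there the paper proceeds \emph{forward}: applying the descent lemma to the accepted iterates $\theta_n,\theta_{n+1}\in conv(K)$, it shows that at every iteration either the initial trial $f_2\eta_{n-1}$ is already below $\eta^\ast$ (hence accepted) or the backtracking stops no lower than $\eta^\ast/f_1$; an induction then yields the uniform bound $\inf_n\eta_n>0$, from which $\eta_n\|\nR(\theta_n)\|^2\to 0$ forces $\nR(\theta_{\phi(n)})\to 0$. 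You proceed \emph{backward} by contradiction: from $\eta_{n_k}\to 0$ you use the monotone growth of the step between backtrackings to locate a genuine backtracking index $m_k$ with $\eta_{m_k}\to 0$, and then apply the descent lemma at the \emph{rejected} trial $f_1\eta_{m_k}$ to obtain $\eta_{m_k}>\tfrac{2(1-\lambda)}{f_1L}$, a contradiction.

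The paper's route is slightly cleaner in that the descent lemma is only invoked between actual iterates (both in $conv(K)$), whereas your rejected trial point need not lie in $K$, which is why you need the $K_\delta$ device and the smallness of $\eta_{m_k}$ before the lemma applies. On the other hand, the paper's argument delivers the stronger intermediate conclusion $\inf_n\eta_n>0$, which is useful later (e.g.\ for showing $\sum_n\eta_n=+\infty$ in rate estimates); your approach does not produce this, but it is a perfectly valid way to reach $\accum\subset\critic_\R$.
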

\begin{proof}
    Let us consider the compact set: 
    \begin{equation*}
        K = \overline{\{ \theta_n, n \in \mathbb{N}\}}.
    \end{equation*}
    Take an accumulation point $\theta^*$ and consider a subsequence $\theta_{\phi(n)}$ that converges to $\theta^*$. Denote by $L_K$ the Lipschitz constant of $\nR$ on $conv(K)$ where $conv(K)$ denotes the convex hull of $K$. Remember that in finite dimension, the convex hull of a compact set is compact. We have this classical inequality:
    \begin{equation}
    \forall y_1,y_2 \in conv(K), \R(y_2)-\R(y_1) \leq \nR(y_1)^T (y_2-y_1) + \frac{L_K}{2}\|y_2-y_1\|^2.
    \label{convex_inequality}
    \end{equation}
    Indeed we can write:
    \begin{multline*}
        \R(y_2)=\R(y_1)+\int_{0}^{1}\left(\nR(y_1+t(y_2-y_1))-\nR(y_1)+\nR(y_1)\right)^T(y_2-y_1)dt, \\
        = \R(y_1)+\nR(y_1)^T(y_2-y_1) + \int_0^1 \left(\nR(y_1+t(y_2-y_1))-\nR(y_1)\right)^T(y_2-y_1)dt, \\
        \leq \R(y_1)+\nR(y_1)^T(y_2-y_1) + \int_0^1 \|\nR(y_1+t(y_2-y_1))-\nR(y_1)\|\|y_2-y_1\|dt, \\
        \leq \R(y_1)+\nR(y_1)^T(y_2-y_1) + \int_0^1 L_Kt\|y_2-y_1\|^2dt,
    \end{multline*}
    by using Cauchy-Scharwtz and the definition of Lipshitz continuity.
    Applying this inequality to $y_1=\theta_n$ and $y_2=\theta_{n+1}$ it comes:
    \begin{equation*}
        \R(\theta_{n+1})-\R(\theta_n) \leq -\eta_n\left(1-\frac{L_K\eta_n}{2}\right)\|\nR(\theta_n)\|^2.
    \end{equation*}
    Therefore for {$\eta_n \leq \eta^* \coloneqq \frac{2}{L_K}(1-\lambda)$ the inequality \eqref{dissipation_inequality} is satisfied.}\\
    Now, take a look at the time step update. The algorithm starts the first iteration with the time step $\eta_{init}$, and at the iteration $n\geq 1$ we begin with a time step $f_2 \eta_{n-1}$. We have two complementary cases that may occur:
    \begin{enumerate}
        \item We begin with a time step $f_2 \eta_{n-1}$ smaller than $\eta^*$. So the inequality \eqref{dissipation_inequality} is already satisfied and supplementary computations are not needed to escape the repeat loop. Therefore $\eta_n=f_2 \eta_{n-1}$.
        \item If  $f_2 \eta_{n-1} \geq \eta^*$, we will reduce $f_2 \eta_{n-1}$ by $f_1$ several times. In the worst case, the algorithm has not found any solution greater than $\eta^*$ and we have to divide it one more time by $f_1$ so that $\eta_n \geq \frac{\eta^*}{f_1}$.
    \end{enumerate}
    As a result, the loop finishes with a time step $\eta_n \geq \min(\tilde{\eta}_n,\frac{\eta^*}{f_1})$ where $\tilde{\eta}_0 = \eta_{init}$ and $\tilde{\eta}_n  = f_2 \eta_{n-1}$ if $n > 0$. 
    By induction we have for $n\geq 0$:
    \begin{equation*}
         \eta_n \geq \min\left(f_2^n \eta_{init},\frac{\eta^*}{f_1}\right).
    \end{equation*}
    As $f_2>1$ there exists $n_1 \geq 0$ such that $ \forall n\geq n_1, f_2^n \eta_{init} \geq \frac{\eta^*}{f_1}$. Therefore:
    \begin{equation*}
        \forall n \geq 0, \eta_n \geq \min\left( \min_{0 \leq k < n_1}f_2^k \eta_{init}, \frac{\eta^*}{f_1}\right).
    \end{equation*}
    We can finally write the following inequality:
    \begin{equation*}
        \inf \eta_n \geq \min\left(\min_{0 \leq k < n_1}f_2^k \eta_{init}, \frac{\eta^*}{f_1}\right) > 0.
    \end{equation*}
    Assume by contradiction that $\theta^* \notin \statio$. Let us write the fundamental descent inequality for the subsequence $(\theta_{\phi_n})_{n\in\mathbb{N}}$:
    \begin{equation*}
        \R(\theta_{\phi(n+1)})-\R(\theta_{\phi(n)}) \leq -\lambda \eta_{\phi(n)} \|\nR(\theta_{\phi(n)})\|^2 \leq 0. 
    \end{equation*}
    So the sequence $(\R(\theta_{\phi(n)}))_{n\in\mathbb{N}}$ is a decreasing sequence bounded by below by 0 and therefore it converges. Then we can deduce that $\displaystyle{\lim_{n\to +\infty}} \eta_{\phi(n)} \|\nR(\theta_{\phi(n)})\|^2 = 0$. As $\inf \eta_{\phi(n)}>0$ and by the continuity of $\nR$ we deduce $\theta^* \in \statio$ which is a contradiction.\\
    Therefore we have proved that $\theta^* \in \statio$ where $\theta^*$ is any accumulation point of $(\theta_n)_{n\in \mathbb{N}}$..
\end{proof}

In the general case, we cannot expect convergence to $\statio$ since the continuous LaSalle's principle \citep{LaSalle} gives that for each initial value $y_0 \in \Rb^m$, $\omega(y_0) \subset \mathcal{Z}$ where:
\begin{equation*}
    \omega(y_0) = \{y^* \in \Rb^m, \exists t_n \to +\infty \text{ such that } y(0)=y_0 \text{ and } y(t_n) \to y^*\},
\end{equation*}
is called the limit set in ODE theory. It is the continuous equivalent of the set of accumulation points $\accum$ for sequence $(y_n)_{n\in\mathbb{N}}$. We want to extend the inclusion $\omega(y_0) \subset \mathcal{Z}$ to the discrete case: $\accum \subset \mathcal{Z}$. The most natural approach would be to apply the convex inequality \eqref{convex_inequality} to $V$ instead of $\R$. This inequality leads to:
\begin{equation*}
    V(y_{n+1})-V(y_n) \leq \nabla V(y_n)^T(y_{n+1}-y_n)+\frac{L_K}{2}\|y_{n+1}-y_n\|^2 = \eta_n\left(\dot{V}(y_n)+\frac{L_K\eta_n}{2}\|F(y_n)\|^2\right).
\end{equation*}
%What it is expected is to get just $\dot{V}$ in the right part of the inequality to obtain the
%\newG{But the same trick for the dissipation rate inequality \eqref{inequality} can not work here. The existence of a $\eta^*$ such that $\dot{V} + \frac{L_K\eta^*}{2}\|F\|^2 \leq 0, \forall n\in\mathbb{N}$ is far from being straightforward, contrary to the particular case of GD treated in the proof of proposition \ref{LCEGD_accumulation}}. 
It is expected to only have $\dot{V}$ in the right part of the inequality to obtain the same form as the inequality \eqref{dissipation_inequality}. If we have $\|F(y)\|^2 \leq -\dot{V}(y)$ for all $y\in \Rb^m$ ({it is an equality} for GD) the previous inequality becomes:
\begin{equation*}
    V(y_{n+1})-V(y_n) \leq \eta_n\left(1-\frac{L_K\eta_n}{2}\right)\dot{V}(y_n).
\end{equation*}
Therefore we can proceed as in proposition \ref{LCEGD_accumulation}. But in the general case there is no reason that this inequality holds (see examples \ref{ex_Momentum_point} and \ref{ex_RMSProp_point}). In \cite{control2}, the authors \textbf{assume the existence of a continuous policy on $\Rb^m$ for the time step that satisfies inequality \eqref{dissipation_inequality} to deduce that the limit point of the sequence lies in $\mathcal{Z}$}. More formally they assume that there exists a continuous map $\tilde{s}: \Rb^m \mapsto \Rb_+^*$ such that:
\begin{equation}
    V(y_n+\tilde{s}(y_n)F(y_n))-V(y_n) \leq \lambda \tilde{s}(y_n)\dot{V}(y_n).
    \label{s_tilde}
\end{equation}
Here we will prove the existence of such an application but only continuous on $\Rb^m \setminus \mathcal{Z}$. This is the central tool to solve the problem of the limit points which is used later to deal with convergence properties of the algorithms. 
%Moreover, it is used again in the next section \ref{section_stability} about Lyapunov stability of the algorithms. 
Note that although it is an abstract result of existence (the continuous function involved in the theorem is not explicited in this paper),
%can not be computed in practice), 
it is sufficient to 
%get information 
obtain several properties of
%about 
the optimizers.\\ %of section \ref{intro} combined to} 
%LCR \ref{algo_LCR} and LCM \ref{algo_LCM}. 

\begin{theorem}[Selection Theorem]
    Assume that $V \in \mathcal{C}^2(\Rb^m)$. Then, there exists a continuous function $s:\Rb^m\setminus \mathcal{Z} \mapsto \Rb_+^*$ such that:
    \begin{equation*}
        \forall y \in \Rb^m\setminus \zerodot, \forall \eta \in ]0,s(y)]: f(y,\eta) \leq 0.
    \end{equation*}
    \label{selection_theorem}
\end{theorem}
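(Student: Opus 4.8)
The plan is to build $s$ locally from a second-order Taylor estimate and then glue the local constructions into a single continuous function by a partition of unity; this is precisely a continuous-selection argument, the paracompactness of the open set $\Rb^m\setminus\zerodot$ playing the role required by Michael's theorem. Start from the pointwise picture. Writing $g(\eta)=f(y,\eta)$, one has $g(0)=0$ and $g'(0)=\nabla V(y)^T F(y)-\lambda\dot V(y)=(1-\lambda)\dot V(y)$. Since $\lambda\in\,]0,1[$ and $\dot V(y)<0$ for $y\notin\zerodot$ (the standing Lyapunov hypothesis $\dot V\le 0$ is what forces this sign — and is also what makes the statement true, for if $\dot V(y)>0$ then $f(y,\cdot)>0$ for small $\eta$ and no admissible $s(y)$ could exist), $f(y,\cdot)$ is strictly decreasing at the origin, hence negative for small $\eta>0$. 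To control \emph{how} small uniformly in $y$ I use $V\in\mathcal{C}^2$: Taylor with integral remainder gives, for some $\xi$ on the segment $[y,y+\eta F(y)]$,
\[
f(y,\eta)=(1-\lambda)\eta\,\dot V(y)+\tfrac{\eta^2}{2}\,F(y)^T\nabla^2 V(\xi)F(y).
\]

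Next, the local uniform bound. Fix $y_0\notin\zerodot$ and a closed ball $B=\bar B(y_0,r)\subset\Rb^m\setminus\zerodot$ (possible since $\zerodot=\dot V^{-1}(\{0\})$ is closed). As $F$ is continuous, $\|F\|\le R$ on $B$; fixing any $\eta_{\max}>0$, every segment $[y,y+\eta F(y)]$ with $y\in B,\ \eta\le\eta_{\max}$ stays in a compact set on which $\|\nabla^2 V\|\le M$. With $d:=\max_B \dot V<0$ (a continuous strictly negative function attains a negative maximum on the compact $B$), the identity above yields, for all $y\in B$ and $\eta\in\,]0,\eta_{\max}]$,
\[
f(y,\eta)\le (1-\lambda)\eta\, d+\tfrac{M R^2}{2}\eta^2,
\]
whose right-hand side is $\le 0$ once $\eta\le \tfrac{2(1-\lambda)|d|}{MR^2}$. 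Thus the \emph{constant} $s_{y_0}:=\min\!\bigl(\eta_{\max},\,\tfrac{2(1-\lambda)|d|}{MR^2}\bigr)>0$ is valid uniformly on the neighbourhood $W_{y_0}:=B(y_0,r)$: $f(y,\eta)\le 0$ for every $y\in W_{y_0}$ and every $\eta\in\,]0,s_{y_0}]$.

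Finally, the gluing. The $W_{y_0}$ cover the metric, hence paracompact, space $\Rb^m\setminus\zerodot$; take a locally finite partition of unity $(\phi_i)$ subordinate to this cover, with $\operatorname{supp}\phi_i\subset W_{y_i}$, and set $s(y)=\sum_i\phi_i(y)\,s_{y_i}$. This is continuous (locally finite sum) and strictly positive (at each $y$ some $\phi_i(y)>0$ and all $s_{y_i}>0$). For a given $y$, $s(y)$ is a convex combination of the $s_{y_i}$ over the active indices, so $s(y)\le s_{y_{i^*}}$ for some active $i^*$; since $\phi_{i^*}(y)>0$ forces $y\in W_{y_{i^*}}$, every $\eta\in\,]0,s(y)]\subset\,]0,s_{y_{i^*}}]$ satisfies $f(y,\eta)\le 0$. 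This $s$ meets the requirement.

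The main obstacle — and the reason a selection argument is needed rather than simply taking the maximal admissible step $\tau(y)=\sup\{s:\ f(y,\cdot)\le 0\text{ on }]0,s]\}$ — is that $\tau$ is in general neither lower nor upper semicontinuous: at a point $y_0$ where $f(y_0,\cdot)$ touches $0$ tangentially at an interior $\eta_1<\tau(y_0)$, an arbitrarily small perturbation of $y$ can lift $f$ above $0$ near $\eta_1$ and collapse $\tau(y)$ down to $\eta_1$. Working instead with the strictly sub-maximal uniform thresholds $s_{y_i}$ (available thanks to the quadratic $\mathcal{C}^2$ remainder) is exactly what lets the convex-combination/partition-of-unity patching preserve the inequality. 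The two technical points to get right are the $\eta$-uniformity of the Hessian bound on each patch and the elementary but crucial observation that a convex combination never exceeds the largest active threshold.
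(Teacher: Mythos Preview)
Your proof is correct. Both you and the paper start from the same second-order Taylor identity
\[
f(y,\eta)=(1-\lambda)\eta\,\dot V(y)+\tfrac{\eta^2}{2}\,F(y)^T\nabla^2 V(\xi)F(y),
\]
which on any compact $K\subset\Rb^m\setminus\zerodot$ yields a uniform threshold $\eta^*_K>0$ below which $f\le 0$. The difference is in how the passage from local to global is organised. The paper packages this estimate into a set-valued map $T(y)=\{\eta>0:\ q(y,\eta)\le 0\}$, where $q$ is built so that $T(y)$ is always a nonempty interval with left endpoint $0$; it then checks that $T$ has closed convex values and is lower hemicontinuous (via an increasing implicit-function lemma), and invokes an abstract Michael-type selection theorem to extract the continuous $s$. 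You instead carry out the selection by hand: locally constant thresholds $s_{y_0}$ on balls $W_{y_0}$, then a partition of unity to define $s=\sum_i\phi_i\,s_{y_i}$, with the one-line convex-combination observation $s(y)\le s_{y_{i^*}}$ for the maximal active index doing the work. Your route is more elementary and self-contained (no black-box selection theorem, no lower-hemicontinuity check), while the paper's route is more modular and makes the connection to selection theory explicit, which is one of its advertised contributions. Your closing remark on why the naive choice $\tau(y)=\sup\{s:\ f(y,\cdot)\le 0\text{ on }]0,s]\}$ fails is exactly the obstruction the paper addresses by replacing $I(y)$ with the convex submap $T(y)$. One cosmetic point: your formula $s_{y_0}=\min(\eta_{\max},\tfrac{2(1-\lambda)|d|}{MR^2})$ tacitly assumes $MR^2>0$; when $M=0$ the bound is vacuously satisfied on $]0,\eta_{\max}]$ and you can just take $s_{y_0}=\eta_{\max}$.
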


The idea is to see the object $I$ defined in \eqref{I} as a multi-application (or set value map) 
$$
I:\left\{\begin{array}{l}
\Rb^m \rightarrow \mathcal{P}(\Rb_+^*),\\
y \rightarrow I(y),
\end{array}\right.
$$
where $\mathcal{P}(\Rb_+^*)$ denotes the set of subsets of $\Rb_+^*$. 
In other words, a multi-application matches a vector to a set. Under this point of view $\tilde{s}$ defined in \eqref{s_tilde} can be seen as a continuous selection of $I$, that is to say a continuous map satisfying:
\begin{equation*}
    \forall y \in \Rb^m \text{ , } \tilde{s}(y) \in I(y).
\end{equation*}
Here instead of assuming the existence of this map, we will prove the existence of a slightly weaker continuous selection $s$ on $\Rb^m \setminus \mathcal{Z}$:
\begin{equation*}
    \forall y \in \Rb^m\setminus \zerodot \text{ , } s(y) \in I(y).
\end{equation*}

The construction of such continuous map is known as the selection theory (see \cite{infinite_dimensional_analysis} for an introduction). Thanks to the multi-application point of view we are reducing the accumulation points localization problem to a topological problem well studied in the literature. 
%trop bien la phrase du dessus! Pour un mec qui ne connait cette théorie, c'est super. 
Nevertheless, the vast majority of results (see \cite{infinite_dimensional_analysis, selection_book} and theorem \ref{theorem_used} of the appendix \ref{selection_theory} for the theorem applied in this section) assume that the \textbf{value maps are convex sets}. Unfortunately, in our case, there is no reason for $I(y)$ to be convex for all $y$ in $\Rb^m$. In order to apply one of these results, we have to find a convex subset $T(y)\subset I(y)$ and build a continuous selection $s$ restricted to $T$. The Taylor-Lagrange formula applied to $f$, defined in \eqref{f}, helps us find a natural candidate for $T(y)$ (see the proof of lemma \ref{inclusion_bounded}). Let us first introduce some notations:
\begin{itemize}
    \item For $y \in \Rb^m$ and $x \in \Rb_+$: $g(y,x) \coloneqq \left\lvert F(y)^T \nabla^2V(y+xF(y)) F(y) \right\rvert$.
    \item For $y \in \Rb^m$ and $\eta \in \Rb_+$
    \begin{equation*}
    q(y,\eta) \coloneqq \eta \left(\displaystyle{\max_{x \in [0,\eta]}} g(y,x)+1 \right) + 2(1-\lambda)\dot{V}(y).
    \end{equation*}
\begin{remarknot}
    The constant $1$ added to the max in the definition of $q$ may seem arbitrary. In fact it is possible to take any positive value. The role of this constant is to enforce the strict increasing monotony of $q$ (without this constant $q$ is just increasing) which constitutes an important feature in the proofs.
\end{remarknot}
    \item The multi-valued application is denoted by 
    \begin{equation*}
        T:
        \left\{
        \begin{array}{l}
            \Rb^m \setminus \zerodot \mapsto \mathcal{P}(\Rb_+^*) \\\\
            y \longmapsto \{\eta>0, q(y,\eta)\leq 0\}.
        \end{array}
        \right.
    \end{equation*}
\end{itemize}

Let us begin by proving the inclusion claimed underneath:
\begin{lemma}
    $\forall y \in \Rb^m \setminus \zerodot$, $T(y) \subset I(y)$ and $I(y)$ is bounded. 
    \label{inclusion_bounded}
\end{lemma}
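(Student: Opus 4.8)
The two assertions are essentially independent, so the plan is to treat them separately, with the inclusion as the heart of the matter. The function $q$ is tailored precisely so that $T(y)\subset I(y)$ becomes a near-immediate consequence of a second-order Taylor expansion, so I would fix $y\in\Rb^m\setminus\zerodot$ and introduce the scalar map $\phi(\eta)\coloneqq V(y+\eta F(y))$, which is $\mathcal{C}^2$ since $V\in\mathcal{C}^2(\Rb^m)$. A direct computation gives $\phi(0)=V(y)$, $\phi'(0)=\nabla V(y)^T F(y)=\dot{V}(y)$ and $\phi''(x)=F(y)^T\nabla^2V(y+xF(y))F(y)$. The Taylor–Lagrange formula then produces, for each $\eta>0$, some $\xi\in\,]0,\eta[$ with
\begin{equation*}
    V(y+\eta F(y)) = V(y)+\eta\dot{V}(y)+\frac{\eta^2}{2}F(y)^T\nabla^2V(y+\xi F(y))F(y).
\end{equation*}

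Substituting this into the definition \eqref{f} of $f$ collapses the linear contributions into $(1-\lambda)\eta\dot{V}(y)$ and leaves the quadratic remainder. Bounding that remainder through $g$, namely $F(y)^T\nabla^2V(y+\xi F(y))F(y)\leq g(y,\xi)\leq\max_{x\in[0,\eta]}g(y,x)$ since $\xi\in[0,\eta]$, yields the key majorization
\begin{equation*}
    f(y,\eta) \leq (1-\lambda)\eta\dot{V}(y)+\frac{\eta^2}{2}\max_{x\in[0,\eta]}g(y,x) = \frac{\eta}{2}\,q(y,\eta)-\frac{\eta^2}{2} \leq \frac{\eta}{2}\,q(y,\eta),
\end{equation*}
where the middle equality is just the definition of $q$ rearranged, the extra constant $1$ in $q$ accounting for the leftover $-\eta^2/2$. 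Hence if $\eta\in T(y)$, i.e. $q(y,\eta)\leq 0$, then $f(y,\eta)\leq 0$ because $\eta>0$, so $\eta\in I(y)$; this proves $T(y)\subset I(y)$.

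For the boundedness of $I(y)$ I would abandon the Taylor argument and instead exploit the Lyapunov structure. Since $V$ takes values in $\Rb^+$ and $\dot{V}\leq 0$ everywhere while $\dot{V}(y)\neq 0$ for $y\notin\zerodot$, one has $\dot{V}(y)<0$. Using only $V(y+\eta F(y))\geq 0$ in \eqref{f} gives $f(y,\eta)\geq -V(y)-\lambda\eta\dot{V}(y)=-V(y)+\lambda\eta|\dot{V}(y)|$, whose right-hand side tends to $+\infty$ as $\eta\to+\infty$. Therefore there exists $\eta_0(y)>0$ beyond which $f(y,\eta)>0$, so that $I(y)\subset\,]0,\eta_0(y)]$ is bounded.

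I do not expect any genuine obstacle here: the Taylor expansion and the bounding through $g$ are routine, and the only point demanding care is the sign bookkeeping, in particular that $\dot{V}(y)<0$ off $\zerodot$, which simultaneously drives the boundedness via $\lambda\eta|\dot{V}(y)|\to+\infty$ and underpins the later nonemptiness of $T(y)$. The substantive difficulty is postponed to the construction of a continuous selection $s$ over $T$ in the Selection Theorem; the present lemma is only the preparatory reduction that replaces the possibly non-convex sets $I(y)$ by the more tractable sublevel sets $T(y)$.
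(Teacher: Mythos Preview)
Your proof is correct and follows essentially the same approach as the paper: a second-order Taylor expansion of $\eta\mapsto V(y+\eta F(y))$ for the inclusion $T(y)\subset I(y)$, and the positivity of $V$ together with $\dot{V}(y)<0$ for the boundedness of $I(y)$. Your packaging is in fact slightly tidier—you obtain the clean inequality $f(y,\eta)\leq\frac{\eta}{2}q(y,\eta)$ directly, whereas the paper routes through the absolute-value form of the Taylor remainder, and your boundedness argument is direct rather than by contradiction—but these are cosmetic differences, not a different route.
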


\begin{proof}
    The inclusion comes from the Taylor-Lagrange formula as $f \in \mathcal{C}^2(\Rb^N)$:
    \begin{equation*}
        \left\lvert f(y,\eta)-f(y,0)-\eta \frac{\partial f}{\partial \eta}(y,0)\right\rvert \leq \frac{\eta^2}{2} \displaystyle{\max_{x\in [0,\eta]}} \left\lvert \frac{\partial^2 f}{\partial \eta^2}(y,x) \right\rvert.
    \end{equation*}
    %Few computations give:
    And since:
    \begin{equation*}
        \begin{array}{lll}
            \frac{\partial f}{\partial \eta}(y,\eta) = F(y)^T \left[ -\lambda \nabla V(y)+\nabla V(y+\eta F(y))\right], \\\\
            \frac{\partial^2 f}{\partial \eta^2}(y,\eta) = F(y)^T \nabla^2 V(y+\eta F(y)) F(y).
        \end{array}
    \end{equation*}
    This implies:
    \begin{equation*}
         \left\lvert f(y,\eta)+\eta (\lambda-1)\dot{V}(y)\right\rvert \leq \frac{\eta^2}{2} \displaystyle{\max_{x\in [0,\eta]}} g(y,x).
    \end{equation*}
    Let $\eta \in T(y)$. We have by definition:
    \begin{equation*}
        \eta \left(\displaystyle{\max_{x \in [0,\eta]}} g(y,x)+1 \right) \leq -2(1-\lambda)\dot{V}(y).
    \end{equation*}
    Then:
       \begin{equation*}
        \eta \displaystyle{\max_{x \in [0,\eta]}} g(y,x) \leq -2(1-\lambda)\dot{V}(y).
    \end{equation*}
    Therefore:
    \begin{multline*}
        f(y,\eta)-\eta (1-\lambda)\dot{V}(y) \leq \left\lvert f(y,\eta)-\eta (1-\lambda)\dot{V}(y)\right\rvert \leq \frac{\eta}{2}\left(\eta \displaystyle{\max_{x \in [0,\eta]}} g(y,x) \right) \\ 
        \leq -\eta(1-\lambda)\dot{V}(y).
    \end{multline*}
    Then $f(y,\eta)\leq 0$ which gives the first part of the lemma.
    \\    
    For the second part, by contradiction assume that $I(y)$ is not bounded. Then we can build a sequence $\eta_n \to +\infty$ such that for all $n\geq 0$: $f(y,\eta_n)\leq 0$. This leads to $\displaystyle{\lim_{n\to +\infty}} \lambda \eta_n \dot{V}(y) = -\infty$ and the inequality:
    \begin{equation*}
        V(y+\eta_nF(y))-V(y) \leq \lambda \eta_n \dot{V}(y)
    \end{equation*}
    gives $\displaystyle{\lim_{n\to +\infty}} V(y+\eta_n F(y)) = -\infty$. This is in contradiction with the positivity of $V$.
\end{proof}

By building the set value map $T$, we have enforced the first condition of theorem \ref{theorem_used} about convex value maps. The second central condition of this theorem (and more generally in selection theory) is \textbf{the lower hemicontinuity} recalled in the appendix \ref{selection_theory}. This is closely related to the existence of a local continuous solution $\eta$ (continuous as a function of $y$) to the equation $q(y,\eta)=v$ for some fixed value $v$. That is why we have to prove a lemma which can be seen as an implicit function theorem: it is very close to the implicit function theorem for strictly monotone functions stated in \citet{implicit_theorem} p.63, but the authors require the continuity respect to the couple $(x,y)$.\\

\begin{lemma}[Increasing implicit function lemma]
    Consider a function $q:\mathcal{O} \times \Rb_+^* \mapsto \Rb$ where $\mathcal{O}$ is an open subset of $\Rb^m$ and such that:
    \begin{enumerate}
        \item For all $y\in \Rb_+^*$, $x \mapsto q(x,y)$ is continuous on $\mathcal{O}$.
        \item For all $x\in \mathcal{O}$, $y \mapsto q(x,y)$ is continuous and strictly increasing on $\Rb_+^*$.
    \end{enumerate}
    Consider $(a,b) \in \mathcal{O} \times \Rb_+^*$ such that $q(a,b)=0$. Then there exists a neighborhood $\mathcal{V}$ of $a$ and a continuous map $\phi: \mathcal{V} \mapsto \Rb_+^*$ such that $q(x,y)=0 \Leftrightarrow \forall x\in \mathcal{V}, y=\phi(x)$.
    \label{increasing_implicit}
\end{lemma}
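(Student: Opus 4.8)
The plan is to exploit the strict monotonicity in the second variable to compensate for the absence of \emph{joint} continuity, which is exactly what distinguishes this statement from the classical monotone implicit function theorem of \citet{implicit_theorem}. Uniqueness is immediate: for fixed $x$ the map $y \mapsto q(x,y)$ is strictly increasing, hence injective, so there can be at most one $y \in \Rb_+^*$ with $q(x,y)=0$. This already forces the equivalence $q(x,y)=0 \Leftrightarrow y=\phi(x)$ to hold for \emph{every} $y \in \Rb_+^*$ (not merely locally), once existence of a zero is established. The central observation is that one never needs to evaluate $q$ with both arguments moving at once: monotonicity lets me trap the zero between two horizontal slices $y=b_-$ and $y=b_+$, and on each such slice only hypothesis (1), continuity in $x$, is required.

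For \textbf{existence on a neighborhood}, I would first use that $q(a,b)=0$ with $y\mapsto q(a,y)$ strictly increasing and continuous to pick $b_-, b_+ \in \Rb_+^*$ with $b_- < b < b_+$ and $q(a,b_-)<0<q(a,b_+)$. By hypothesis (1) the maps $x\mapsto q(x,b_\pm)$ are continuous, so there is an open neighborhood $\mathcal{V} \subset \mathcal{O}$ of $a$ on which $q(x,b_-)<0$ and $q(x,b_+)>0$ simultaneously. For each $x\in\mathcal{V}$, hypothesis (2) makes $y\mapsto q(x,y)$ continuous with opposite signs at $b_-$ and $b_+$, so the intermediate value theorem yields a zero in $(b_-,b_+)$, unique by strict monotonicity. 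This defines $\phi:\mathcal{V}\mapsto (b_-,b_+)\subset\Rb_+^*$ with $q(x,\phi(x))=0$.

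For \textbf{continuity of $\phi$}, I would fix $x_0\in\mathcal{V}$, set $y_0=\phi(x_0)$, and take $\varepsilon>0$ small enough that $(y_0-\varepsilon,y_0+\varepsilon)\subset(b_-,b_+)$. Strict monotonicity gives $q(x_0,y_0-\varepsilon)<0<q(x_0,y_0+\varepsilon)$, and continuity in $x$ (hypothesis (1) again) produces a neighborhood $W\subset\mathcal{V}$ of $x_0$ on which the same strict inequalities persist. For $x\in W$ the unique zero $\phi(x)$ must then lie in $(y_0-\varepsilon,y_0+\varepsilon)$, i.e. $|\phi(x)-\phi(x_0)|<\varepsilon$, proving continuity at $x_0$ and hence on all of $\mathcal{V}$.

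The only genuine difficulty is conceptual rather than computational: one must resist invoking a standard implicit function theorem, which would require $q$ to be jointly continuous (or differentiable), an assumption not granted here. The resolution---and the reason separate continuity suffices---is precisely the slicing argument above, where every sign test is performed at a fixed value of one of the two variables, so only the one-variable continuity hypotheses are ever used. The sole point requiring care is to keep $b_\pm$ and the $\varepsilon$-band strictly inside $\Rb_+^*$, so that $\phi$ genuinely takes values in $\Rb_+^*$ as the statement demands.
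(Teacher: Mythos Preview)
Your proof is correct and follows essentially the same approach as the paper: trap the zero between two horizontal slices using strict monotonicity, use separate continuity in $x$ to propagate the sign conditions to a neighborhood, apply the intermediate value theorem, and repeat the same slicing argument locally for continuity of $\phi$. The paper writes the slices as $b\pm r$ rather than $b_-,b_+$, but the argument is otherwise identical.
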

\begin{proof}
    Consider $r>0$ such that $b-r>0$. %We have: $q(a,b-r)<0$ and $q(a,b+r)>0$ by strict monotonicity of $y \mapsto q(a,y)$.
    Since $y \mapsto q(a,y)$ is strictly increasing on $\Rb_+^*$, we have:
    \begin{equation*}
        q(a,b-r)<0 \text{ and } q(a,b+r)>0.
    \end{equation*}
    %We claim that 
    Moreover, the continuity of $q$ with respect to $x$ gives the existence of $\alpha>0$ satisfying:
    \begin{equation*}
        \forall x \in B(a,\alpha): q(x,b-r)<0 \text{ and } q(x,b+r)>0. 
    \end{equation*}
    Indeed, assume by contradiction that such an $\alpha$ does not exist:
    \begin{equation*}
        \forall \alpha>0, \exists x \in B(a,\alpha), q(x,b-r)\geq 0 \text{ or } q(x,b+r) \leq 0. 
    \end{equation*}
    Taking the sequence $\alpha_n = \frac{1}{n}>0$ for $n\geq 1$, the property above makes it possible to build a sequence $x_n \in B\left(a,\frac{1}{n}\right)$ such that:
    \begin{equation*}
        q(x_n,b-r)\geq 0 \text{ or } q(x_n,b+r)\leq0.
    \end{equation*}
    As $\|x_n-a\|< \frac{1}{n}$ for all $n\geq 1$ we deduce that $x_n$ converges to $a$. By continuity of $x\mapsto q(x,b\pm r)$ and by passing to the limit in the two inequalities above we have: $q(a,b-r)\geq 0$ or $q(a,b+r)\leq0$ which is a contradiction.\\   
    For each $x$ in this ball $\mathcal{V}:=B(a,\alpha)$, we can find $y_0(x) \in ]b-r,b+r[$ satisfying $q(x,y_0(x))=0$ by the intermediate value theorem. It is unique since $y \mapsto q(x,y)$ is a one-to-one map. Let us denote by $\phi(x)$ this number $y_0(x)$ for each $x\in \mathcal{V}$, it remains to prove the continuity of $\phi$. \\
    Let $x_0 \in \mathcal{V}$ and show the continuity in $x_0$. We can write $q(x_0,y_0)=0$ where $y_0=\phi(x_0)$. Let $\epsilon>0$. Once again invoking the fact that $y \mapsto  q(x_0,y)$ is strictly increasing on $\Rb_+^*$ we get $q(x_0,y_0-\epsilon)<0$ and $q(x_0,y_0+\epsilon)>0$. The continuity %in $x$ 
    of $q$ with respect to its first variable gives the existence of $\gamma>0$ satisfying:
    \begin{equation*}
        \forall x \in B(x_0,\gamma): q(x,y_0-\epsilon)<0 \text{ and } q(x,y_0+\epsilon)>0. 
    \end{equation*}
    The intermediate value theorem gives that $\phi(x) \in ]y_0-\epsilon, y_0+\epsilon[$, which concludes this proof.
\end{proof}

Now we have to check that the conditions above are verified by our application $q$ (the constant $1$ enables to have a strictly increasing function instead of a increasing function).
Lemma \ref{increasing_implicit} will be applied to a translation of $q$ in the proof of lemma \ref{lem_map_T} because the lower hemicontinuity of $T$ is closely related to the existence of a continuous solution $\phi(x)$ of the inequality $q(x,\phi(x))\leq 0$.\\

\begin{lemma}
    We have:
    \begin{enumerate}
        \item $\forall \eta \in \Rb_+$, $y \mapsto q(y,\eta)$ is continuous.
        \item $\forall y\in \Rb^m$, $\eta \mapsto q(y,\eta)$ is continuous and strictly increasing.
    \end{enumerate}
    \label{continuity_increasing}
\end{lemma}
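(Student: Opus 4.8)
The plan is to reduce both items to the continuity properties of the parametric maximum
\[
M(y,\eta) \coloneqq \max_{x\in[0,\eta]} g(y,x),
\]
so that $q(y,\eta) = \eta\bigl(M(y,\eta)+1\bigr) + 2(1-\lambda)\dot V(y)$. The building block is that $g(y,x) = \lvert F(y)^T \nabla^2 V(y+xF(y)) F(y)\rvert$ is \emph{jointly} continuous on $\Rb^m \times \Rb_+$ and nonnegative: this follows from the continuity of $F$, the continuity of $\nabla^2 V$ (since $V \in \mathcal{C}^2(\Rb^m)$), the continuity of $(y,x)\mapsto y+xF(y)$, and the stability of continuity under products, composition and the absolute value. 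The remaining term $2(1-\lambda)\dot V(y) = 2(1-\lambda)\nabla V(y)^T F(y)$ is continuous in $y$ and constant in $\eta$, so it never causes trouble.

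For item (1) I would fix $\eta$ and prove $y \mapsto M(y,\eta)$ is continuous. Around any $y_0$, restrict to a closed ball $\bar B$ so that $g$ is uniformly continuous on the compact set $\bar B \times [0,\eta]$. Then for $y,y'\in\bar B$ the elementary estimate $\lvert \max_x g(y,x) - \max_x g(y',x)\rvert \le \max_x \lvert g(y,x)-g(y',x)\rvert$ controls $\lvert M(y,\eta)-M(y',\eta)\rvert$ by the modulus of uniform continuity, giving continuity at $y_0$. Multiplying by the fixed constant $\eta$, adding the constant $1$, and adding the continuous term $2(1-\lambda)\dot V$ then yields continuity of $y\mapsto q(y,\eta)$.

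For item (2) I would fix $y$. Continuity in $\eta$ comes from the fact that $\eta \mapsto M(y,\eta)$ is the running maximum of the continuous function $x\mapsto g(y,x)$: for $\eta'>\eta$ one has $M(y,\eta') = \max\bigl(M(y,\eta),\, \max_{x\in[\eta,\eta']} g(y,x)\bigr)$, and since $\max_{x\in[\eta,\eta']} g(y,x)\to g(y,\eta)\le M(y,\eta)$ as $\eta'\to\eta^+$ (with the symmetric left-hand estimate obtained from uniform continuity of $g(y,\cdot)$ on $[0,\eta]$), $M(y,\cdot)$ is continuous; hence so is $\eta\mapsto \eta\bigl(M(y,\eta)+1\bigr)$ and thus $q(y,\cdot)$. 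Strict monotonicity is exactly where the added constant $1$ earns its keep: $M(y,\cdot)$ is nondecreasing and $g\ge 0$ forces $M(y,\eta)\ge 0$, so for $0\le \eta_1<\eta_2$,
\[
\eta_2\bigl(M(y,\eta_2)+1\bigr) \ge \eta_2\bigl(M(y,\eta_1)+1\bigr) > \eta_1\bigl(M(y,\eta_1)+1\bigr),
\]
the first inequality by monotonicity of $M$ and the second because the factor $M(y,\eta_1)+1 \ge 1 > 0$ is strictly positive; adding the $\eta$-independent constant preserves the strict inequality.

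The main obstacle is the continuity of the parametric maximum $M$; everything else is algebra. The subtlety is twofold: in item (1) the maximization set $[0,\eta]$ is fixed while the maximized function varies with $y$ (handled by uniform continuity on a compact product), whereas in item (2) the maximized function is fixed while the domain $[0,\eta]$ grows with $\eta$ (handled by the running-maximum decomposition). Both are standard instances of the maximum theorem, but I would spell them out directly to keep the argument self-contained.
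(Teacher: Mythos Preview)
Your proposal is correct and follows essentially the same approach as the paper: both arguments reduce everything to the continuity of the parametric maximum $M(y,\eta)=\max_{x\in[0,\eta]}g(y,x)$, handle item~(1) via uniform continuity of $g$ on a compact product $\bar B\times[0,\eta]$, and handle item~(2) via the running-maximum structure together with the ``$+1$'' to force strict monotonicity. Your use of the elementary bound $\lvert\max_x g(y,x)-\max_x g(y',x)\rvert\le\max_x\lvert g(y,x)-g(y',x)\rvert$ is in fact a bit cleaner than the paper's tessellation of $[0,\eta]$, but the underlying idea is identical.
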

\begin{proof}
    Let us prove the second point first. Consider a fixed $y \in \Rb^m$. To prove the continuity of $\eta \mapsto q(y,\eta)$, the challenging part is %it is sufficient 
    to prove the continuity of $\eta \mapsto \displaystyle{\max_{x \in [0,\eta]}} g(y,x)$. Let $\eta \in \Rb^+$ and $\epsilon>0$. As $g(y,\cdot)$ is continuous at $\eta$, there exists $\gamma>0$ (depending of $\eta$ and $y$) such that:
    \begin{equation*}
        \forall \eta' \in \Rb^+, |\eta-\eta'|<\gamma \Rightarrow |g(y,\eta)-g(y,\eta')|<\epsilon.
    \end{equation*}
    So for such a $\eta'$ we have: $g(y,\eta)-\epsilon < g(y,\eta')<g(y,\eta)+\epsilon$. For $\eta'$ satisfying $0<\eta'-\eta<\gamma$ we can deduce:
    \begin{equation*}
        \displaystyle{\max_{x \in [0,\eta']}} g(y,x) < \displaystyle{\max_{x \in [0,\eta]}} g(y,x)+\epsilon.
    \end{equation*}
    By symmetry of the role of $\eta$ and $\eta'$ we conclude about the continuity with respect to $\eta$. Concerning the monotonicity, it is sufficient to notice that $\eta \mapsto q(y,\eta)$ is the sum of the increasing function $\eta \mapsto \displaystyle{\max_{x \in [0,\eta]}} g(y,x)$, the strict increasing (linear) function $\eta \mapsto \eta$ and a rest which is independent on $\eta$.%(the rest is only a function of $y$). \\\\
    
    For the first point, let us consider $\eta \in \Rb_+$ and show the continuity of $q$ at $y \in \Rb^m$. Let $\epsilon>0$. As the function $g$ is continuous it is uniformly continuous on the compact $\bar{B}(y,1) \times [0,\eta]$. Then there exists $\gamma>0$ (depending of $\eta$ and $y$) such that:
    \begin{equation}
        \forall y' \in \bar{B}(y,1), \forall x,x' \in [0,\eta], \|y-y'\|_{\infty}<\gamma \text{ and } |x-x'|<\gamma \Rightarrow |g(y,x)-g(y',x')|<\epsilon.
        \label{uniform_continuity}
    \end{equation}
    Now consider a tessellation of non-overlapping sets of the compact $[0,\eta]$, $x_0=0$, $x_1$, $\dots$, $x_n=\eta$ such that for all $0\leq i \leq n$, $|x_{i+1}-x_i|<\gamma$ (all $x_i$ depend of $\eta$ and $\gamma$). We can then write the following equality:
    \begin{equation*}
        \displaystyle{\max_{x \in [0,\eta]}} g(y',x) = \displaystyle{\max_{0\leq i \leq n}} \left( \displaystyle{\max_{x' \in [x_i,x_{i+1}]}} g(y',x')\right).
    \end{equation*}
    Let $y'$ be such that $\|y-y'\|_{\infty}<\gamma$. By \eqref{uniform_continuity} and by passing to the maximum we claim that for all $0 \leq i \leq n$:
    \begin{equation*}
        \displaystyle{\max_{x' \in [x_i,x_{i+1}]}} g(y,x') -\epsilon \leq \displaystyle{\max_{x' \in [x_i,x_{i+1}]}} g(y',x') \leq \displaystyle{\max_{x' \in [x_i,x_{i+1}]}} g(y,x')+\epsilon. 
    \end{equation*}
    To prove this, assume that it is not the case. By continuity of $x' \mapsto g(y,x')$ on the compact set $[x_i,x_{i+1}]$ it exists $x_1',x_2'\in [x_i,x_{i+1}]$ satisfying $g(y,x_1')=\displaystyle{\max_{x' \in [x_i,x_{i+1}]}} g(y,x')$ and $g(y',x_2')=\displaystyle{\max_{x' \in [x_i,x_{i+1}]}} g(y',x')$. Therefore $|g(y,x_1')-g(y',x_2')|>\epsilon$ which is in contradiction with \eqref{uniform_continuity}.  
    By taking the maximum with respect to the finite number of indices $i$, we have the continuity with respect to $y$.\\
\end{proof}

Now let us verify that $T$ has the nice properties claimed before, mainly convexity
values and lower hemicontinuity \ref{hemicontinuity}.\\

\begin{lemma}
    The map $T$ has non-empty closed convex values in $\Rb_+^*$ and it is lower hemicontinuous.
    \label{lem_map_T}
\end{lemma}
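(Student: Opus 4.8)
The strategy is to first describe the shape of each value $T(y)$, which settles the algebraic properties (non-empty, closed, convex) almost immediately, and then to establish lower hemicontinuity, which is the only genuinely delicate point.

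\textbf{Step 1 (shape of $T(y)$).} Fix $y \in \Rb^m \setminus \zerodot$. By Lemma~\ref{continuity_increasing} the map $\eta \mapsto q(y,\eta)$ is continuous and strictly increasing, so $T(y) = \{\eta > 0 : q(y,\eta)\le 0\}$ is an interval of the form $]0,\eta^*(y)]$. Indeed, its left end is $0$: since $q(y,0) = 2(1-\lambda)\dot{V}(y) < 0$ (here $\lambda \in\,]0,1[$ and $\dot{V}(y)<0$ because $y\notin\zerodot$ and $V$ is a Lyapunov function), continuity in $\eta$ gives $q(y,\eta)<0$ for all small $\eta>0$, so $T(y)$ is non-empty and contains a right neighborhood of $0$. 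It is bounded since $T(y)\subset I(y)$ and $I(y)$ is bounded by Lemma~\ref{inclusion_bounded} (equivalently $q(y,\eta)\ge \eta + 2(1-\lambda)\dot{V}(y)\to +\infty$). As the sublevel set $\{\eta\in\Rb_+^* : q(y,\eta)\le 0\}$ of a continuous function it is closed in $\Rb_+^*$, and being an interval it is convex. Writing $\eta^*(y)$ for the unique root of $q(y,\cdot)$ furnished by the intermediate value theorem, we get $T(y)=\,]0,\eta^*(y)]$.

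\textbf{Step 2 (lower hemicontinuity).} Using the topological formulation recalled in the appendix~\ref{selection_theory}, I must show that for every $y_0\in\Rb^m\setminus\zerodot$ and every open $U\subset\Rb_+^*$ with $T(y_0)\cap U\neq\emptyset$, there is a neighborhood $W$ of $y_0$ such that $T(y)\cap U\neq\emptyset$ for all $y\in W$. Pick $\eta_0\in T(y_0)\cap U$, so $q(y_0,\eta_0)\le 0$. The key observation is that $T(y_0)$ reaches down to $0$ without attaining it, so I may approach $\eta_0$ strictly from below: choose $\eta_1\in\,]0,\eta_0[\,\cap\,U$, which is possible since $U$ is open. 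By strict monotonicity $q(y_0,\eta_1)<q(y_0,\eta_0)\le 0$, so the inequality at $\eta_1$ is \emph{strict}. Strictness is an open condition: by continuity of $y\mapsto q(y,\eta_1)$ (Lemma~\ref{continuity_increasing}) there is a neighborhood $W$ of $y_0$ on which $q(\cdot,\eta_1)<0$, whence $\eta_1\in T(y)\cap U$ for every $y\in W$. This proves lower hemicontinuity.

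\textbf{Main obstacle and an alternative.} The only real difficulty is the boundary case $\eta_0=\eta^*(y_0)$, where $q(y_0,\eta_0)=0$: membership of the \emph{right endpoint} cannot be transported directly to nearby $y$, since the root $\eta^*(y)$ may fall below $\eta_0$ for $y$ close to $y_0$. Step 2 sidesteps this by retreating to a strictly interior point $\eta_1$, which is legitimate precisely because $T(y_0)$ has no minimal element. An equivalent route, better adapted to the sequential formulation used later, is to feed the boundary relation $q(y_0,\eta^*(y_0))=0$ into the increasing implicit function Lemma~\ref{increasing_implicit} (whose hypotheses are exactly those verified in Lemma~\ref{continuity_increasing}); this produces a continuous $\phi$ on a neighborhood of $y_0$ with $q(y,\phi(y))=0$, hence a continuous local selection $y\mapsto\phi(y)\in T(y)$ satisfying $\phi(y_0)=\eta^*(y_0)$, from which lower hemicontinuity follows at once.
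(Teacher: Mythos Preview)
Your proof is correct. Step~1 matches the paper's argument in substance (interval shape from monotonicity, closedness from continuity, non-emptiness from $\dot V(y)<0$). Step~2, however, takes a genuinely different and more elementary route than the paper.

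The paper proves lower hemicontinuity by invoking the increasing implicit function Lemma~\ref{increasing_implicit}: given $\eta\in T(y)\cap U$ with $]\eta-\epsilon,\eta+\epsilon[\subset U$, it sets $v=q(y,\eta)$, applies Lemma~\ref{increasing_implicit} to $\tilde q=q-v$ at the point $(y,\eta)$, and obtains a continuous $\phi$ with $q(x,\phi(x))=v\le 0$ and $\phi(x)\in]\eta-\epsilon,\eta+\epsilon[$ for $x$ near $y$, so $\phi(x)\in T(x)\cap U$. In other words, the paper tracks the level set of $q$ continuously in $y$. You instead exploit the specific shape $T(y_0)=\,]0,\eta^*(y_0)]$: since $T(y_0)$ has no least element, any $\eta_0\in T(y_0)\cap U$ can be undercut by some $\eta_1\in\,]0,\eta_0[\cap U$ at which $q(y_0,\eta_1)<0$ strictly, and strict negativity of $q(\cdot,\eta_1)$ is open in $y$ by Lemma~\ref{continuity_increasing}. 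This bypasses Lemma~\ref{increasing_implicit} entirely for the hemicontinuity step. What the paper's approach buys is a local continuous selection $\phi$ as a by-product, which is conceptually closer to the selection theorem being set up; what your approach buys is economy---only continuity of $q$ in $y$ at a single $\eta_1$ is used, and no implicit-function machinery is needed. Your closing paragraph correctly identifies the paper's method as the natural alternative for handling the boundary case $q(y_0,\eta_0)=0$ head-on.
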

\begin{proof}
    Let $y \in \Rb^m \setminus \zerodot$. The fact that $\eta \mapsto q(y,\eta)$ increases gives that $T(y)$ is an interval. \\
    $T(y) = \Rb_+^* \cap \{\eta \geq 0, q(y,\eta)\leq 0\}$ is closed in $\Rb_+^*$ because of the continuity of $\eta \mapsto q(y,\eta)$. \\
    Assume by contradiction that $T(y)$ is empty. This means that:
    \begin{equation*}
        \forall \eta>0, \eta \left(\displaystyle{\max_{x \in [0,\eta]}} g(y,x)+1
        %c
        \right) > -2(1-\lambda)\dot{V}(y).
    \end{equation*}
    Let $\eta$ tends towards 0 and we get that $\dot{V}(y)=0$. This is a contradiction because $y \notin \zerodot$. Thereofore $T(y)$ is not empty. \\

    Let $y \notin \zerodot$ and show the lower continuity of $T$ in $y$. Let $U$ be an open set that intersects $T(y)$. Lemma \ref{inclusion_bounded} states that $T(y)$ is a non-empty bounded interval, hence we can write it as:
    \begin{equation*}
        T(y) = (a(y),b(y)),
    \end{equation*}
    where $a(y)$ and $b(y)$ can be included or not. 
    %It is straightforward to see that 
    As $U$ intersects $T(y)$ and $U$ is the union of open intervals, there exists $\epsilon>0$ and $\eta \in T(y)$ such that: $]\eta-\epsilon,\eta+\epsilon[ \subset U$.\\
    
    Define $v=q(y,\eta) \leq 0$ and the function $\tilde{q}(x,y) = q(x,y)-v$. We apply the increasing implicit theorem (lemma \ref{increasing_implicit}) on $\tilde{q}$ and $(y,\eta)$ to get the existence of $r>0$ and a continuous map $\phi: B(y,r) \mapsto \Rb$ such that:
    \begin{equation*}
        \forall x \in B(y,r), \tilde{q}(x,y)=0 \Leftrightarrow y=\phi(x).
    \end{equation*}
    By the continuity of $\phi$ {with respect to} $y$ and the fact that $\phi(y)=\eta>0$ 
    %\newG{RIEN DE FAUX MAIS ATTENTION, NOTATIONS UN PEU MALHEUREUSES NON? Je ne pense pas car $\eta$ est le point au voisinage duquel on inverse et donc aussi l'app inverse local au point considéré comme dans tout th d'inversion} 
    there exists $\alpha>0$ such that:
    \begin{equation*}
        \forall x \in B(y,\alpha), \phi(x)>0 \text{ and } |\phi(x)-\phi(y)| < \epsilon.
    \end{equation*}
    Let $x \in B(y, \alpha)$. We claim 
    %\newG{C'ÉTAIT PAS UNE HYPOTHESE? Je pige l'idée mais il faut un peu taffer ici... Non l'hypothèse c'était U intersectes T(y) et pas T(x). Là je détaille au max le fait qu'ils s'intersectent en trouvant un élément commun. Arf oui, je devais fatiguer... Merci.} 
    that $U$ intersects $T(x)$.
    Indeed we have $\tilde{q}(x,\phi(x))=0$ which means that $q(x,\phi(x)) = v \leq 0$. Therefore $\phi(x) \in T(x)$. But $\phi(x) \in ]\eta-\epsilon, \eta+\epsilon[ \subset U$. So $\phi(x) \in T(x) \cap U$.\\
\end{proof}

Equipped with these results, it is now possible to prove the selection theorem (\ref{selection_theorem}) by seeing it as
%Now the theorem \ref{selection_theorem} is just 
an application of theorem \ref{theorem_used} recalled in appendix \ref{selection_theory}.
\begin{proof}{\textbf{of theorem \ref{selection_theorem} (Selection Theorem)}}
    Using the previous lemma \ref{lem_map_T}, we apply the selection theorem 6.2 p.116 in \cite{selection_book}. This gives a continuous application $s:\Rb^m \setminus \zerodot \mapsto \Rb_+^*$ such that:
    \begin{equation*}
        \forall \theta \in \Rb^m\setminus \mathcal{Z}, q(y,s(y)) \leq 0.
    \end{equation*}
    As $T(y)$ is an interval and 0 is its infimum we have:
       \begin{equation*}
        \forall y \in \Rb^m\setminus \zerodot, \forall \eta \in ]0,s(y)], q(y,\eta) \leq 0.
    \end{equation*}
    The result is a direct consequence of the inclusion $T(y) \subset I(y)$ coming from the lemma \ref{inclusion_bounded}.
\end{proof}

Now we can prove that the set of accumulation points for LCR lies in $\zerodot$: from the previous result, we can replace $\eta^*$ in the proof of proposition \ref{LCEGD_accumulation} by the minimum of $s$ on some compact. {\it To the best of our knowledge, it is the first time selection theory is applied to backtracking optimizers.} The following result can be seen as a discrete LaSalle principle:\\

\begin{theorem}[LCR limit set]
    Assume that $V \in \mathcal{C}^2(\Rb^m)$. Consider the sequence $(y_n)_{n\in\mathbb{N}}$ generated by the algorithm LCR and assume that $(y_n)_{n\in\mathbb{N}}$ is bounded. Then the set of accumulation points of the sequence $(y_n)_{n\in\mathbb{N}}$ (limits of subsequences) lies in $\zerodot$, i.e. $\accum\subset\zerodot$. 
    \label{LCR_accumulation}
\end{theorem}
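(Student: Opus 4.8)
The plan is to argue by contradiction and to follow the structure of the proof of Proposition \ref{LCEGD_accumulation}, replacing the explicit threshold $\eta^*$ — which there stems from the Lipschitz/convexity estimate \eqref{convex_inequality} available only for GD — by a uniform lower bound coming from the continuous selection $s$ of Theorem \ref{selection_theorem}. Let $y^*$ be an accumulation point, let $(y_{\phi(n)})_n$ be a subsequence converging to $y^*$, and assume for contradiction that $y^* \notin \zerodot$.

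First I would use the monotone decrease of $V$. Each step of LCR satisfies \eqref{dissipation_inequality} with $\dot V \leq 0$, so $(V(y_n))_n$ is non-increasing; since $(y_n)_n$ is bounded and $V$ is continuous, $V(y_n)$ is bounded below and therefore converges. Telescoping \eqref{dissipation_inequality} then yields $\sum_n |\lambda \eta_n \dot V(y_n)| \leq V(y_0) - \lim_k V(y_k) < \infty$, whence $\eta_n \dot V(y_n) \to 0$ along the whole sequence, in particular along $(\phi(n))_n$.

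The crucial step is to bound $\eta_{\phi(n)}$ away from $0$. As $\dot V$ is continuous, $\zerodot$ is closed, so $y^* \notin \zerodot$ provides a closed ball $\bar B(y^*,\rho)$ disjoint from $\zerodot$; on this compact set $s$ is defined and continuous (Theorem \ref{selection_theorem}), hence attains a minimum $s_{\min}>0$. For $n$ large, $y_{\phi(n)} \in \bar B(y^*,\rho)$, so $s(y_{\phi(n)}) \geq s_{\min}$. I then inspect the restart linesearch of LCR: each outer iteration starts from $\eta = \eta_{init}$ and divides by $f_1$ until \eqref{dissipation_inequality} holds, and Theorem \ref{selection_theorem} guarantees that every $\eta \leq s(y_{\phi(n)})$ is accepted. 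Thus either the loop stops at $\eta_{init}$, or the last rejected value $f_1\eta_{\phi(n)}$ violates \eqref{dissipation_inequality}, forcing $f_1\eta_{\phi(n)} > s(y_{\phi(n)})$; in both cases $\eta_{\phi(n)} \geq \min(\eta_{init}, s_{\min}/f_1) =: c > 0$.

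Combining the two facts concludes the proof: from $\eta_{\phi(n)}\dot V(y_{\phi(n)}) \to 0$ and $\eta_{\phi(n)} \geq c > 0$ we obtain $\dot V(y_{\phi(n)}) \to 0$, and continuity of $\dot V$ together with $y_{\phi(n)} \to y^*$ gives $\dot V(y^*) = 0$, i.e. $y^* \in \zerodot$, a contradiction; hence $\accum \subset \zerodot$. The main obstacle is exactly this step bound: no explicit threshold is available beyond GD, and it is the continuity of the abstract selection $s$ on $\Rb^m\setminus\zerodot$ — ensuring a positive minimum over any compact avoiding $\zerodot$ — that prevents $\eta_{\phi(n)} \to 0$. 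The restart structure of LCR is what makes this bound immediate (each iteration reverts to $\eta_{init}$), which is why LCR is markedly easier to handle here than LCM.
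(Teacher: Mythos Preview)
Your proposal is correct and follows essentially the same route as the paper's proof: contradiction, convergence of $V(y_n)$ to force $\eta_{\phi(n)}\dot V(y_{\phi(n)})\to 0$, a compact neighbourhood of $y^*$ avoiding $\zerodot$ on which the continuous selection $s$ of Theorem~\ref{selection_theorem} has a positive minimum, and then the restart structure of LCR to bound $\eta_{\phi(n)}\geq\min(\eta_{init},s_{\min}/f_1)$. Your telescoping along the full sequence (using boundedness of $(y_n)$ and continuity of $V$ rather than $V\geq 0$) is a slightly cleaner variant of the same step.
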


\begin{proof}
    Consider a subsequence $(y_{\phi(n)})_{n\in\mathbb{N}}$ that converges to $y^* \in \Rb^m$. Passing to the limit in the relation $V(y_{\phi(n+1)})-V(y_{\phi(n)}) \leq \lambda \eta_{\phi(n)} \dot{V}(y_{\phi(n)}) \leq 0$, we get that $\displaystyle{\lim_{n\to +\infty} \eta_{\phi(n)} \dot{V}(y_{\phi(n)}) = 0}$. Indeed, note that the sequence $(V(y_{\phi(n)}))_{n\in\mathbb{N}}$ converges since it is decreasing and lower bounded by 0. \\ 
    
 Assume by contradiction that $y^* \notin \zerodot$. As $(y_{\phi(n)})_{n\in\mathbb{N}}$ converges to $y^*$ and $\zerodot$ is closed (due to the continuity of $\dot{V}$), there exists a compact set $K$ containing $y^*$ and no points of $\zerodot$, such that $\forall n \geq n_0, y_{\phi(n)} \in K$ for a certain $n_0\geq 0$. We consider the function $s$ of the selection theorem \ref{selection_theorem}. As $s$ is continuous on $K\subset \Rb^N\setminus \zerodot$, we define $\eta^* = \displaystyle{\min_{y \in K}} \text{ } s(y)>0$. By the property on $s$ we deduce:
\begin{equation*}
    \forall y \in K, \forall \eta \in ]0,\eta^*], V(y+\eta F(y)) - V(y) \leq \lambda \eta \dot{V}(y).
\end{equation*}

At the iteration $\phi(n) \geq n_0$ we begin with a time step $\eta_{init}$. The loop finishes with a time step $\eta_{\phi(n)} \geq \min(\eta_{init},\frac{\eta^*}{f_1})$ for the same reason than in the proof of proposition \ref{LCEGD_accumulation}. Then $\inf \eta_{\phi(n)} \geq \min\left(\displaystyle{\min_{0\leq k< n_0}\eta_k},\eta_{init},\frac{\eta^*}{f_1}\right)>0$.\\

As $(y_{\phi(n)})_{n\in\mathbb{N}}$ converges and $\dot{V}$ is continuous, $(\dot{V}(y_{\phi(n)}))_{n\in\mathbb{N}}$ converges to $\dot{V}(y^*)$. Now, since $\displaystyle{\lim_{n\to +\infty}} \eta_{\phi(n)} \dot{V}(y_{\phi(n)}) = 0$ and $\inf \eta_{\phi(n)}>0$ we can conclude that $\dot{V}(y^*)=0$. This is a contradiction so $y^* \in \zerodot$.\\
\end{proof}

Unfortunately, for LCM, the existence of a continuous selection on $\Rb^m\setminus \zerodot$ is not sufficient to locate all the accumulation points of the sequence $(y_n){_{n\in\mathbb{N}}}$ generated by LCM. We next prove a weaker version of {the previous} result. Still, note that the next theorem remains sufficient to deduce the convergence result of section \ref{section_cv}.\\

\begin{theorem}[LCM limit point]
    Assume that $V \in \mathcal{C}^2(\Rb^m)$. Consider the sequence $(y_n)_{n\in\mathbb{N}}$ generated by the algorithm LCM and assume that $(y_n)_{n\in\mathbb{N}}$ is bounded. Then there exists at least one accumulation point of $(y_n)_{n\in\mathbb{N}}$ in $\zerodot$.  
    \label{LCM_accumulation}
\end{theorem}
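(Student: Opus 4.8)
The plan is to argue by contradiction, assuming $\accum \cap \zerodot = \emptyset$, and to reduce to a situation where the \emph{whole} tail of the sequence lives in a fixed compact set avoiding $\zerodot$. On such a set the selection map of Theorem~\ref{selection_theorem} supplies a uniform positive lower bound on the admissible step sizes, playing exactly the role of the constant $\eta^*$ in Proposition~\ref{LCEGD_accumulation}. The whole difficulty, compared with the LCR statement of Theorem~\ref{LCR_accumulation}, is that the LCM step is initialised at $f_2\eta_{n-1}$ rather than at a fixed $\eta_{init}$, so the lower bound on $\eta_n$ is coupled to the history of the iterates through that compact set.

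First I would note that $\accum$ is non-empty (the sequence is bounded, so Bolzano--Weierstrass applies) and that $\zerodot$ is closed by continuity of $\dot V$. Assume for contradiction that no accumulation point lies in $\zerodot$, i.e. $\accum\cap\zerodot=\emptyset$. I claim the iterates eventually stay at a positive distance from $\zerodot$: otherwise some subsequence $y_{n_k}$ satisfies $d(y_{n_k},\zerodot)\to 0$, and extracting a convergent sub-subsequence would produce an accumulation point in the closed set $\zerodot$, a contradiction. Hence there exist $\delta>0$ and $n_0$ with $y_n\in K$ for all $n\geq n_0$, where $K:=\{y\in K_0 : d(y,\zerodot)\geq\delta\}$ and $K_0=\overline{\{y_k:k\in\mathbb{N}\}}$ is compact; thus $K$ is a compact subset of $\Rb^m\setminus\zerodot$.

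On $K$ the selection map $s$ of Theorem~\ref{selection_theorem} is continuous and positive, so $\eta^*:=\min_{y\in K}s(y)>0$, and since $]0,\eta^*]\subseteq\,]0,s(y)]$ for every $y\in K$, every step $\eta\in\,]0,\eta^*]$ satisfies the dissipation inequality \eqref{dissipation_inequality} at every point of $K$. Reasoning as in Proposition~\ref{LCEGD_accumulation}, for $n\geq n_0$ the trial step starts at $f_2\eta_{n-1}$: if $f_2\eta_{n-1}\leq\eta^*$ it is accepted immediately and $\eta_n=f_2\eta_{n-1}$, while if $f_2\eta_{n-1}>\eta^*$ the backtracking stops as soon as the trial drops to or below $\eta^*$, so $\eta_n\geq\eta^*/f_1$. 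In both cases $\eta_n\geq\min(f_2\eta_{n-1},\eta^*/f_1)$, and a short induction using $f_2>1$ gives $\eta_n\geq\min(f_2^{\,n-n_0}\eta_{n_0},\eta^*/f_1)$, whence $\liminf_n \eta_n\geq\eta^*/f_1>0$.

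To close, I would use that $(V(y_n))_{n}$ is non-increasing (dissipation with $\dot V\leq0$) and bounded below by $0$, hence convergent; passing to the limit in $V(y_{n+1})-V(y_n)\leq\lambda\eta_n\dot V(y_n)\leq0$ forces $\eta_n\dot V(y_n)\to0$. Combined with $\liminf_n\eta_n>0$ this yields $\dot V(y_n)\to0$, so by continuity every accumulation point $y^*$ satisfies $\dot V(y^*)=0$, i.e. $y^*\in\zerodot$, contradicting $\accum\cap\zerodot=\emptyset$ (and $\accum\neq\emptyset$). The main obstacle is precisely the memory in the step-size rule: for a single suspected bad accumulation point one controls the iterates only \emph{along a subsequence}, whereas the recursion $\eta_n\geq\min(f_2\eta_{n-1},\eta^*/f_1)$ requires the \emph{entire} tail to remain in $K$. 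This global control is available only under the contradiction hypothesis $\accum\cap\zerodot=\emptyset$, which is exactly why one obtains the existence of one accumulation point in $\zerodot$ rather than the full inclusion $\accum\subset\zerodot$ proved for LCR.
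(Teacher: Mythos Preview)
Your proof is correct and follows essentially the same route as the paper: argue by contradiction that $\accum\cap\zerodot=\emptyset$, trap the tail of the sequence in a compact subset of $\Rb^m\setminus\zerodot$, use the selection map $s$ of Theorem~\ref{selection_theorem} to obtain a uniform $\eta^*>0$, and then reproduce the induction of Proposition~\ref{LCEGD_accumulation} to force $\liminf\eta_n>0$ and hence $\dot V(y_n)\to 0$. The only cosmetic difference is your construction of the compact set $K$ via a distance threshold $\delta$, whereas the paper takes $K=\overline{\{y_n\}}\setminus\zerodot$ directly and argues its closedness from the contradiction hypothesis; both lead to the same conclusion.
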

\begin{proof}
     By contradiction assume that there is no accumulation point in $\zerodot$.
     Let us define the following set:
     \begin{equation*}
        K = \overline{\{ y_n, n \in \mathbb{N}\}}\ \setminus \zerodot \subset \Rb^m\setminus \zerodot.
    \end{equation*}
    We claim that $K$ is compact. First of all, $K$ is bounded since $(y_n)_{n\in\mathbb{N}}$ is bounded. To show that $K$ is closed, let us take a convergent sequence of elements of $K$, that is to say, a subsequence $(y_{\phi(n)})_{n\in\mathbb{N}}$ of $(y_n)_{n\in\mathbb{N}}$ such that: $\forall n\in \mathbb{N}, y_{\phi(n)} \notin \zerodot$. If $\displaystyle{\lim_{n\to \infty}}y_{\phi(n)} \in \zerodot$, we have found an accumulation point lying in $\zerodot$, which is a contradiction. Therefore $\displaystyle{\lim_{n\to \infty}}y_{\phi(n)} \in K$.\\
    To deduce the result, the methodology used %we should proceed as 
    in the proof of proposition \ref{LCEGD_accumulation} can be used
    just by replacing $\eta^*\coloneqq \frac{2}{L_K}(1-\lambda)$ by $\eta^* \coloneqq \displaystyle{\min_{y\in K}} \text{ } s(y)$.
\end{proof}

\begin{remark}
    This result implies that if $(y_n)_{n\in\mathbb{N}}$ converges, its limit lies in $\zerodot$. In order to get a result as general as in theorem \ref{LCR_accumulation} but for LCM, we may have to build a selection $s$ continuous on $\Rb^m$ {instead of $\Rb^m\setminus \zerodot$}.
    \label{remark_LCM_zerodot}
\end{remark}

\begin{remarknot}
    Both theorems \ref{LCR_accumulation} and \ref{LCM_accumulation} assume the existence of bounded trajectories. A sufficient condition to ensure that the sequence $(y_n)_{n\in \mathbb{N}}$ is bounded for any initial condition $y_0 \in \Rb^m$ is to assume that $V$ is radially unbounded, that is to say: $\displaystyle{\lim_{\|y\|\to \infty}} V(y)=+\infty$. Indeed, if the sequence is not bounded, it is possible to build a subsequence $\|y_{\phi(n)}\| \to +\infty$. This implies that $V(y_{\phi(n)}) \to +\infty$. But this is a non sense because the sequence $(V(y_n))_{n\in \mathbb{N}}$ is decreasing due to condition \eqref{dissipation_inequality}.
\end{remarknot}

To end this section, we present some consequences of our results on the classical optimizers already mentioned in section \ref{intro}. \\

\begin{example}[GD]
    Let $\R$ be differentiable with $\nR$ locally Lipshitz and $\R$ radially unbounded. For GD, {$\zerodot=\statio=\critic_\R$}. Then proposition \ref{LCEGD_accumulation} ensures that the accumulation points of GD based on LCM are critical points of the function to minimize, as $\accum\subset \critic_\R$. This is a desired property for an optimizer. If $\R$ is not radially unbounded, it is possible to add, for example, an $L^2$-regularization term for the result to hold.
    \label{ex_GD_point}
\end{example}

\begin{example}[Momentum]
    Let $\R \in \mathcal{C}^2(\Rb^N)$. For Momentum, let us remember the expression of $F$ ($y=(v,\theta)^T$):
    \begin{equation*}
        F(v,\theta) = 
        \begin{pmatrix}
            -v-\nR(\theta) \\
            v
        \end{pmatrix},
    \end{equation*}
    where the constant parameter $\bar{\beta_1}$ is set to one for the sake of simplicity.
    We can easily see that the function $V(v,\theta) = \R(\theta)+\frac{\|v\|^2}{2}$ is positive, radially unbounded, and by computing its derivative $\dot{V}(v,\theta) = -\|v\|^2$, we get that $\dot{V}(v,\theta)\leq 0$. 
    By applying the algorithm LCR with this Lyapunov control $V$, we have that all subsequences satisfy $v_{\phi(n)} \to 0$ using theorem \ref{LCR_accumulation}. But we get no valuable information about {the limit of}  $\nR(\theta_{\phi(n)})$ because $\zerodot \neq \statio$ {as} $\zerodot = \{(0,\theta),\theta \in \Rb^N\}$ and $\statio=\{(0,\theta), \theta \in \critic_\R\}$.
    \label{ex_Momentum_point}
\end{example}

\begin{example}[RMSProp]
    Let $\R \in \mathcal{C}^2(\Rb^N)$ and consider the RMSProp ODE \citep{CV_Adam, Bilel} that has the form $y=(s,\theta)^T\in \Rb_+^N \times \Rb^N$ and:
    \begin{equation*}
        F(s,\theta) =
        \begin{pmatrix}
            -s + \nR(\theta)^2 \\
            - \dfrac{\nR(\theta)}{\sqrt{\epsilon_a+s}}
        \end{pmatrix}
        .
    \end{equation*}
    Consider algorithms LCR and LCM with the following Lyapunov function:
    \begin{equation*}
        V(s,\theta) = 2\left( \R(\theta) + \displaystyle{\sum_{i=1}^N \sqrt{\epsilon_a+s_i}}\right),
    \end{equation*}
    and its derivative:
    \begin{equation*}
        \dot{V}(s,\theta) = -\displaystyle{\sum_{i=1}^N} \dfrac{s_i}{\sqrt{\epsilon_a+s_i}}-\displaystyle{\sum_{i=1}^N} \dfrac{\partial_i\R(\theta)^2}{\sqrt{\epsilon_a+s_i}} \leq 0.
    \end{equation*}
    Note that the equivalence $\dot{V}(s,\theta)=0 \Leftrightarrow \left\{s=0 \text{ and } \nR(\theta)=0\right\}$ holds so that $\statio=\zerodot$. {Furthermore,} $V$ is positive radially unbounded. Hence for LCR the accumulation points $(s^*,\theta^*)\in\accum$ satisfy $s^*=0$ and $\nR(\theta^*)=0$. These equalities are true for the potential limit $(s^*, \theta^*)$ of LCM. Therefore if RMSProp with the control $V$ converges, the output of the algorithms will be a critical point of $\R$ i.e. $\mathcal{A}\subset \{0\} \times \critic_\R$.  
    \label{ex_RMSProp_point}
\end{example}

\begin{example}[pGD]
     Let $\R \in \mathcal{C}^2(\Rb^N)$ and consider the pGD flow \cite{cv_qflow} (with $p>1$) that has the form $y=\theta\in \Rb^{N}$ and:
    \begin{equation*}
        F(\theta) =
        \left\{
        \begin{array}{cll}
             \dfrac{\nR(\theta)}{\|\nR(\theta)\|^{\frac{p-2}{p-1}}} &\text{ if } &\nR(\theta)\neq 0, \\\\
             0 &\text{  if } &\nR(\theta)=0. 
        \end{array}
        \right.
    \end{equation*}
    As $p>1$, $\dfrac{p-2}{p-1}<1$ so $F$ is continuous. We can take the Lyapunov function of GD: $V(\theta)=\R(\theta)$. Few computations give:
    \begin{equation*}
        \dot{V}(\theta) = -\|\nR(\theta)\|^{\frac{p}{p-1}} \leq 0.
    \end{equation*}
    For this optimizer, by definition $\critic_\R = \statio$. Besides, given the expression of $\dot{V}$, we have $\zerodot=\statio$. As a result, if the algorithms LCR and LCM converge, the result is a critical point of the function to minimize, i.e. we have $\accum\subset \zerodot=\critic_\R$. In the particular case $1<p\leq 2$, the hypothesis $\R \in \mathcal{C}^2$ can be weakened by just requiring that $\nR$ is locally Lipshitz. Indeed, following the same argument than in proposition \ref{LCEGD_accumulation}, we can apply the convex inequality \eqref{convex_inequality} to $V$:
    \begin{equation*}
        V(\theta_{n+1})-V(\theta_n) \leq \eta_n\left(\dot{V}(\theta_n)+\frac{L_K\eta_n}{2}\|F(\theta_n)\|^2\right).
    \end{equation*}
    But $\|F(\theta)\|^2 = (-\dot{V}(\theta))^{\frac{2}{p}} \leq -\dot{V}(\theta)$ for points $\theta$ in a neighborhood of $\theta^* \in \mathcal{E}$ because $p\leq 2$. The above inequality leads to:
    \begin{equation*}
        V(\theta_{n+1})-V(\theta_n) \leq \eta_n\left(1-\frac{L_K\eta_n}{2}\right)\dot{V}(\theta_n).
    \end{equation*}
    We conclude as in the proof of lemma \ref{LCEGD_accumulation}. However, we will see in example \ref{pgd_cv} that the case $p\leq 2$ is of little interest.
    \label{ex_pGD_point}
\end{example}

\section{The local asymptotic stability}
\label{section_stability}

In the previous section, we have located the accumulation points of the sequences generated by algorithms LCR, LCM, and illustrated on several examples that the induced optimizers lead to critical points of the function that we would like to minimize. One may argue that this requirement is trivially satisfied by constant step size optimizers and that the suggested Lyapunov backtrackings only bring technical difficulties. We will {\it prove that this is not the case by digging out some crucial qualitative properties of the {backtracking algorithms LCR and LCM which, to our knowledge, do not hold in the same conditions for their constant step size counterparts}}. {More precisely, we prove stability in this section and global convergence in the next one}.

\subsection{Local stability of an isolated stationary point}
\label{local_stability}

Recently in the analysis of optimizers for ML, many papers got interested in the stability properties of an ODE $y'(t)=F(y(t))$ which is asymptotically solved (i.e. as $\eta\rightarrow 0$) by the iterates of the studied optimizer. For instance, GD asymptotically solves the ODE $\theta'(t) = -\nR(\theta(t))$ for which it is well-known \citep{Bilel} that the isolated minimums of $\R$ are stable. In \cite{Bilel}, the importance of preserving this property after discretization is illustrated and discussed on numerical examples. 
In \cite{Absil}, the authors prove this property only for descent algorithms like GD, under Armijo's conditions for the time step selection, and for analytic functions. Here we generalize this result for a general ODE/optimizer and suppress the analytical assumption. 
This means that the backtracking of algorithm LCR (the case of LCM is tackled later) is sufficient for the discrete process to preserve local stability behavior of the ODE. 
{
%Qu'en est-t-il d'algo 2? J'ai l'impression qu'un lecteur attendrait un comentaire dessus. je mets ça dans la remarque après le théorème car à ce stade on ne peut expliquer la difficulté. Ok, je fais une tentative
}\\

Let us claim the main result of this section that allows checking the stability of algorithm LCR. This theorem is a sort of discrete Lyapunov theorem \cite{Lyapunov}.\\
\begin{theorem}[Stability Theorem]
    Consider an equilibrium $y^*$ of the ODE $y'(t)=F(y(t))$ i.e. $y^*\in \statio$. Assume that the Lyapunov function $V \in \mathcal{D}(\Rb^m)$ (differentiable functions on $\Rb^m$) in the algorithm LCR is definite positive and that its derivative $\dot{V}$ is negative on some neighborhood $B_r(y^*)$ of $y^*$ ($r>0$) and that $V(y^*)=0$. Then $y^*$ is a stable equilibrium of the algorithm LCR:
    \begin{equation*}
        \forall \epsilon>0, \exists \gamma>0, \|y_0-y^*\|<\gamma \implies \forall n\geq 0, \|y_n-y^*\|<\epsilon.
    \end{equation*}
    \label{stability_theorem}
\end{theorem}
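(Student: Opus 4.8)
The plan is to mimic the classical continuous-time Lyapunov stability proof, in which a well-chosen sublevel set of $V$ is shown to be forward invariant, while overcoming the specific obstruction created by the discrete dynamics. Since it suffices to treat small radii (for $\epsilon \ge r$ one simply reuses the $\gamma$ associated to, say, $r/2$), I would first assume $\epsilon < r$, so that $\overline{B_\epsilon(y^*)}$ lies in the region where $\dot V \le 0$. The heart of the continuous argument is that, given $\ell := \min_{\|y-y^*\|=\epsilon} V(y) > 0$ (positive by compactness of the sphere and positive-definiteness of $V$), a trajectory starting in $\{V < \ell\}$ stays there because $V$ decreases, and by continuity it can never reach the sphere where $V \ge \ell$; hence it is trapped in $B_\epsilon(y^*)$. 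The dissipation inequality \eqref{dissipation_inequality} guarantees the discrete analogue $V(y_{n+1}) \le V(y_n)$ whenever $\dot V(y_n) \le 0$, so the monotonic decrease of $V$ survives discretization. \emph{The main obstacle} is that, unlike a continuous trajectory, a single Euler step $y_{n+1} = y_n + \eta_n F(y_n)$ could jump directly \emph{over} the sphere $\{\|y-y^*\|=\epsilon\}$ into a far region where $V$ happens to be small again, so the classical intermediate-value reasoning is unavailable.

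To defeat the jumping problem I would exploit two facts: that LCR never enlarges the trial step beyond its initial value, so $\eta_n \le \eta_{init}$, and that $F(y^*)=0$, so $F$ is small near $y^*$. Concretely, write $M_\delta := \max_{y \in \overline{B_\delta(y^*)}} \|F(y)\|$; by continuity $M_\delta \to \|F(y^*)\| = 0$ as $\delta \to 0$, hence one can fix a radius $\delta$ with $0 < \delta < \min(\epsilon,r)$ and
\begin{equation*}
\delta + \eta_{init}\, M_\delta < \epsilon .
\end{equation*}
With this $\delta$, any single step launched from $B_\delta(y^*)$ has length $\eta_n\|F(y_n)\| \le \eta_{init} M_\delta$ and therefore lands inside $B_\epsilon(y^*)$: it cannot leap out. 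Next I would replace the threshold $\ell$ by $m_\delta := \min_{\delta \le \|y-y^*\| \le \epsilon} V(y)$, again strictly positive by compactness of the annulus and positive-definiteness; its defining \emph{sublevel-trap} property is that $y \in \overline{B_\epsilon(y^*)}$ with $V(y) < m_\delta$ forces $\|y-y^*\| < \delta$. Finally, using $V(y^*)=0$ and continuity of $V$, I would choose $\gamma \in (0,\delta)$ with $\|y_0-y^*\| < \gamma \Rightarrow V(y_0) < m_\delta$.

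The proof then closes by induction on the statement ``$y_n \in B_\delta(y^*)$ and $V(y_n) < m_\delta$''. The base case is the choice of $\gamma$. For the inductive step, $y_n \in B_\delta(y^*) \subset B_r(y^*)$ gives $\dot V(y_n)\le 0$, so \eqref{dissipation_inequality} yields $V(y_{n+1}) \le V(y_n) < m_\delta$; the step-length bound of the previous paragraph gives $\|y_{n+1}-y^*\| \le \delta + \eta_{init} M_\delta < \epsilon$, i.e. $y_{n+1}\in B_\epsilon(y^*)$; and the sublevel trap then forces $y_{n+1} \in B_\delta(y^*)$, reproducing the hypothesis. Consequently $y_n \in B_\delta(y^*) \subset B_\epsilon(y^*)$ for every $n$, which is exactly the claimed stability. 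The only points requiring care beyond this are the well-posedness of each LCR step (the backtracking loop terminates because $\dot V(y_n) < 0$ away from $y^*$, while $y^*\in\statio$ is a fixed point) and the reduction to $\epsilon < r$; both are routine.
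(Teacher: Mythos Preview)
Your argument is correct, and it takes a genuinely different route from the paper's proof. The paper proceeds by contradiction: assuming instability, it builds a sequence of initial points $\tilde y_n\to y^*$ whose trajectories escape $B_\epsilon(y^*)$ at times $k_n$, then looks at the last-before-escape points $u_n=h^{k_n-1}(\tilde y_n)$, extracts a convergent subsequence with limit $u\in\overline{B_\epsilon(y^*)}$, and obtains the absurdity $V(y^*)<V(u)\le\lim V(\tilde y_{\phi(n)})=V(y^*)$. To rule out $u=y^*$ it proves (exactly as you do, via $\tilde\eta\le\eta_{init}$ and $F(y^*)=0$) that $h(B_\alpha(y^*))\subset B_{\epsilon/2}(y^*)$ for some $\alpha>0$, forcing $\|u_n-y^*\|\ge\alpha$. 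So both arguments rest on the same two ingredients---the uniform step bound $\eta_n\le\eta_{init}$ specific to LCR, and continuity of $F$ at the equilibrium---but package them differently: you run a direct induction with an explicit two-ball construction ($B_\delta$ as a launch pad that cannot overshoot $B_\epsilon$, then a sublevel-set trap over the annulus to pull the iterate back into $B_\delta$), whereas the paper hides this inside a sequential-compactness contradiction. Your version is closer to the textbook continuous-time proof, is constructive in $\gamma$, and makes transparent exactly where the $\eta_{init}$ bound is used; the paper's version avoids the annulus minimum $m_\delta$ and the explicit choice of $\delta$, at the cost of being less direct. Your remark that termination of the backtracking is a separate (routine) issue is also a point the paper leaves implicit.
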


\begin{proof}
    Before beginning the proof,let us rewrite the LCR algorithm as a recurrent relation $y_{n+1}=h(y_n)$ where:
    \begin{equation*}
        h(y) = y+\tilde{\eta}(y)F(y).
        \label{h}
    \end{equation*}
    with:
    \begin{equation*}
        \left\{
        \begin{array}{ll}
        \tilde{\eta}(y) = \dfrac{\eta_0}{f_1^p}, \\\\
         p = \min \left\{k\in \mathbb{N}, f\left(y,\dfrac{\eta_0}{f_1^k}\right)\leq 0\right\}.
        \end{array}
        \right.
    \end{equation*}
    The main difficulty is the \textbf{non-continuity} of $h$. In fact, classical analysis of discrete stability assume the continuity of the application that generates the sequence \cite{LaSalle_discrete, Lyapunov_discrete, Lyapunov_inria, Lyapunov_non_monotonic}. To overcome this difficulty, we will not focus directly on the trajectory starting from a close initial condition but on a sequence of initial conditions. Assume by contradiction that $y^*$ is not a stable equilibrium:
    \begin{equation*}
        \exists \epsilon>0, \forall \gamma>0, \exists \tilde{y}_0 \in \Rb^N, \exists m \in \mathbb{N}, \|\tilde{y}_0-y^*\|<\gamma \text{ and } \|h^m(\tilde{y}_0)-y^*\|\geq \epsilon.
    \end{equation*}
    Here $h^m$ denotes the $m$-composition: $h=h\circ h \cdots \circ h$. We reduce $\epsilon>0$ in order that $\epsilon<r$. As a result, we can build a sequence of initial points $(\tilde{y}_n)_{n\in \mathbb{N}}$ that converges to $y^*$ and an integer sequence $(k_n)_{n\in\mathbb{N}}$ such that:
    \begin{equation*}
        \left\{
        \begin{array}{ll}
             \|h^m(\tilde{y}_n)-y^*\|<\epsilon \text{ for } 0\leq m< k_n,  \\
             \|h^{k_n}(\tilde{y}_n)-y^*\|\geq \epsilon. 
        \end{array}
        \right.
    \end{equation*}
    In other words, $k_n$ is the first time the trajectory starting from $\tilde{y}_n$ escapes the ball $B_{\epsilon}(y^*)$. \\
    
    We claim that there exists $\alpha>0$ such that $h\left(B_{\alpha}(y^*)\right) \subset B_{\epsilon/2}(y^*)$. By contradiction we have: 
    \begin{equation*}
        \forall \alpha>0, \exists x\in B_{\alpha}(y^*), h(x) \notin B_{\epsilon/2}(y^*).
    \end{equation*}
    Then we build a sequence $(x_n)_{n\in \mathbb{N}}$ that converges to $y^*$ such that: 
    \begin{equation*}
        \forall n\geq 0, \|h(x_n)-y^*\|\geq \frac{\epsilon}{2}. 
    \end{equation*}
    Given that the map $\tilde{\eta}$ is bounded by $\eta_0$:
    \begin{equation*}
        \|h(x_n)-y^*\| = \|x_n+\tilde{\eta}(x_n)F(x_n)-y^*\| \leq \|x_n-y^*\| + \eta_0\|F(x_n)\|.
    \end{equation*}
    As $F$ is continuous, $F(x_n) \to F(y^*)=0$ so $\|h(x_n)-y^*\| \to 0$. This is a contradiction with $\|h(x_n)-y^*\| \geq \frac{\epsilon}{2}$. \\

    As $\tilde{y}_n \to y^*$, we have:
    \begin{equation*}
        \exists n_0\in \mathbb{N}, \forall n\geq n_0, \|\tilde{y}_n-y^*\|<\alpha. 
    \end{equation*}
    Then for all $n\geq n_0$ we have: 
    \begin{equation*}
        \|h(\tilde{y}_n)-y^*\|<\frac{\epsilon}{2}. 
    \end{equation*}
    In particular, this means that $k_n>1$ for $n\geq n_0$. \\
    Now let us define the following sequence:
    \begin{equation*}
        u_n=h^{k_n-1}(\tilde{y}_n). 
    \end{equation*}
    By definition of $k_n$, $u_n \in B_{\epsilon}(y^*)$ for $n\geq n_0$ because at the time $k_{n-1}$, the trajectory starting from $\tilde{y}_n$ has not escaped the ball $B_{\epsilon}(y^*)$ yet. Consequently, the sequence $(u_n)_{n\in \mathbb{N}}$ is bounded and we can extract a convergent subsequence $(u_{\phi(n)})_{n\in \mathbb{N}}$. \\ 
    Denote by $u \in \bar{B}_{\epsilon}(y^*) \subset B_r(y^*)$ its limit. As $V$ is positive definite on $B_r(y^*)$ and $V$ is continuous we get:
    \begin{equation*}
        0=V(y^*) < V(u) = V\left(\displaystyle{\lim_{n\to +\infty}}u_{\phi(n)}\right) = \displaystyle{\lim_{n\to +\infty}} V(u_{\phi(n)}) = \displaystyle{\lim_{n\to +\infty}} V\left(h^{k_{\phi(n)}-1}(\tilde{y}_{\phi(n)})\right)
    \end{equation*}
    As $\dot{V}$ is negative the LCR algorithm makes the function $V$ decreases in the ball $B_r(y^*)$ so: $\displaystyle{\lim_{n\to +\infty}} V\left(h^{k_{\phi(n)}-1}(\tilde{y}_{\phi(n)})\right) \leq \displaystyle{\lim_{n\to +\infty}} V(\tilde{y}_{\phi(n)}) = V(y^*)$. This leads to the contradiction $V(y^*)<V(y^*)$. 
\end{proof}

Let us present some applications to classical algorithms(GD, RMSProp, pGD) with LCR backtracking.

\begin{example}
    \label{local_gd}[GD]
    Let $\R$ be differentiable. Consider the algorithm LCR with $F=-\nR$ and $V=\R$. Let $\theta^*$ be an isolated minimum of $\R$. Define the following translation of $V$: $\tilde{V}=\R(\theta)-\R(\theta^*)$ which gives $\dot{\tilde{V}}(\theta) = \dot{V}(\theta) = -\|\nR(\theta)\|^2$. So $V$ is definite positive and $\dot{V}$ is negative at the vicinity of $\theta^*$. Notice that in \cite{Absil} the stability of GD with descent conditions is only proved for analytic functions. \textbf{Here this assumption is not mandatory}. Besides we get the local attractivity of the optimizer thanks to theorem \ref{stability_theorem}.
\end{example}

\begin{example}
    \label{local_rms_prop}[RMSProp]
    Let us focus on RMSProp in the same configuration as in example \ref{ex_RMSProp_point}. According to theorem \ref{stability_theorem}, the Lyapunov control (defined in example \ref{ex_RMSProp_point}) makes it possible to stabilize the RMSProp with LCR backtracking.
    For this, consider the translation of $V$:
    \begin{equation*}
        \tilde{V}(s,\theta) = 2\left( \R(\theta)-\R(\theta^*) + \displaystyle{\sum_{i=1}^N \sqrt{\epsilon_a+s_i}}-N\sqrt{\epsilon_a}\right),
    \end{equation*}
    for a isolated minimum $\theta^*$ of $\R$. As in {example} \ref{ex_RMSProp_point}, $\tilde{V}$ is definite positive and $\dot{\tilde{V}}$ is negative at the neighborhood of $\theta^*$.
\end{example}

\begin{example}[pGD]
    Pursuing example \ref{ex_pGD_point}, the isolated local minima of $\R$ are stable by using the pGD with LCR backtracking. It is a direct consequence of theorem \ref{stability_theorem} applied to the translation $\tilde{V}(\theta) = V(\theta)-V(\theta^*)$ for $\theta^*$ a isolated minimum of $\R$. 
\end{example}

%But can we state a similar result for LCM? 
At this point, we would like to have a similar result for LMC. However,
in this case, the actual time step before the backtracking, does not only depend on a fix hyperparameter $\eta_{init}$ but on the previous learning rate. Then the application $h$ has to depend on the actual variable $y$ and the previous time step. Let us first define the map $\tilde{\eta}$ for LCM:
\begin{equation*}
        \left\{
        \begin{array}{ll}
        \tilde{\eta}(y,\eta) = \dfrac{f_2\eta}{f_1^p}, \\\\
         p = \min \left\{k\in \mathbb{N}, f\left(y,\dfrac{f_2\eta}{f_1^k}\right)\leq 0\right\}.
        \end{array}
        \right.
\end{equation*}
As we want to compose the map $h$, $m$ times, to get the $m$-th iterate of the algorithm it is necessary that $h$ has the same input and output dimension. Then we have to define $h$ as follows:
\begin{equation*}
    h(y,\eta) = 
    \begin{pmatrix}
        y+\tilde{\eta}(y,\eta) F(y) \\
        \tilde{\eta}(y,\eta)
    \end{pmatrix}
    .
\end{equation*}
As the map has now two arguments (defined on $\Rb^m \times \Rb_+$), one can wonder what the notion of classic Lyapunov stability means in that case. Stability is measured respect to a fixed point $(y^*, \eta^*)$ of $h$ but we are not interested in remaining close to a learning rate $\eta^*$. This is why the notion of stability is too constraining and we have to rely on partial stability \cite{vorotnikov_partial} because we are only interested on the trajectory $(y_n)_{n\in \mathbb{N}}$. In fact, defining the sequences $(y_n)_{n\in \mathbb{N}}$ and $(\eta_n)_{n\in \mathbb{N}}$ by $(y_{n+1},\eta_{n+1})=h(y_n,\eta_n)$, we say that $y^*$ is $y$-stable on the whole with respect to $\eta$ if:
\begin{equation*}
    \forall \epsilon>0, \exists \delta>0, \|y_0-y^*\|<\delta \implies \forall n\geq 0, \|y_n-y^*\|< \epsilon.
\end{equation*}
This definition appears as a sort of projection of the classic stability on the $y$-axis and we can think that we can reproduce the previous proof for LCR with this new definition. However, a crucial fact in the previous proof was the boundedness of the map $\tilde{\eta}$ but in the case of LCM, we have not found any reasons for this map to be bounded. This is why, we only deal with the stability of LCR with respect to a isolated local minimum in this paper. The stability of LCM remains an open question. Nevertheless, we will see in the next subsection, that both LCR and LCM are stable with respect to global minima, which is of greater interest in practice.  

\subsection{Stability of the set of global minima}

The stability of local minima proves that LCR preserves an important dynamic property of the ODE. But in practice, the stability of interest concerns the set of \textbf{global minima} $\globalset$:
\begin{equation*}
    \globalset \coloneqq \{\theta^* \in \Rb^N; \forall \theta \in \Rb^N, \R(\theta) \geq \R(\theta^*)\}. 
\end{equation*}
Indeed, we want to avoid the situation where the initial point is near a global minimum but converges to a local one or a saddle point. Indeed, such an undesirable behavior has been observed in practice for other optimizers (see \citet{Bilel} for a illustration). % of this behavior. 
In this subsection, we will establish conditions under which the set $\globalset$ is stable and attractive. We will focus on a class of maps called KL functions. We refer to appendix B for their definitions. KL functions are involved in many optimization problems \cite{Bolte_KL, Bolte_semi_analytic} because they include semi-algebraic, semi-analytic functions and especially analytic ones. In neural networks optimization, many error functions satisfy this hypothesis because the activation functions are often analytic, such as sigmoid, tanh, Gelu \cite{gelu} or Silu for instance. %\cite{activations}
Note that the latter two are regularizations of relu which is not differentiable.
We need this kind of functions to avoid a situation where there exists a local minimum or a saddle point arbitrary close to $\globalset$. This hypothesis will "force $\globalset$ to be isolated in a way" that we will clarify in the proof.\\

\begin{theorem}
    Denote by $\globalset_V$ the set of global minima of $V$. $V$ is supposed to be differentiable, positive and its derivative $\dot{V}$ negative on $\Rb^m$. If $V$ is radially unbounded, then $\globalset_V$ is stable for LCR and LCM:
    \begin{equation*}
        \forall \epsilon>0, \exists r>0, d(y_0,\globalset_V)<r \implies \forall n\geq 0, d(y_n,\globalset_V)<\epsilon.
    \end{equation*}
    Moreover, if we also assume that $V$ is a KL function and ($\dot{V}=0\implies \nabla V=0$), then $\globalset_V$ is attractive for LCR and LCM:
    \begin{equation*}
             \exists \gamma>0, \forall y_0 \in \Rb^m, d(y_0,\globalset_V)<\gamma \implies \displaystyle{\lim_{n\to +\infty}}d(y_n,\globalset_V)=0.      
    \end{equation*}
\end{theorem}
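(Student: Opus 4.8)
The plan is to reduce everything to the geometry of the sublevel sets of $V$, which are forward invariant: the dissipation inequality \eqref{dissipation_inequality} together with $\dot V \le 0$ on $\Rb^m$ makes $\bigl(V(y_n)\bigr)_{n}$ non-increasing for \emph{both} LCR and LCM, so the whole argument can treat the two algorithms simultaneously through this single monotonicity property. First I would record the consequences of radial unboundedness: being continuous, positive and coercive, $V$ attains its global minimum $V^* := \min_{\Rb^m} V$, so $\globalset_V = V^{-1}(V^*)$ is nonempty, and it is closed (continuity) and bounded (coercivity), hence \emph{compact}; likewise every sublevel set $S_c := \{V \le c\}$ is compact.

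The key geometric fact is that for every $\epsilon>0$,
\[
c_\epsilon := \inf\{V(y) : d(y,\globalset_V) \ge \epsilon\} > V^*.
\]
Indeed, fixing any $c_0 > V^*$, the set $\{d(\cdot,\globalset_V)\ge\epsilon\}\cap S_{c_0}$ is compact and disjoint from $\globalset_V$, so $V$ attains on it a minimum $\mu>V^*$, while $V>c_0$ off $S_{c_0}$; thus $c_\epsilon\ge\min(\mu,c_0)>V^*$. Consequently, for $V^*\le c<c_\epsilon$ one has $S_c\subset\{d(\cdot,\globalset_V)<\epsilon\}$. Stability follows at once: given $\epsilon$, choose such a $c$; the open set $\{V<c\}$ contains the compact set $\globalset_V$, hence contains a tube $\{d(\cdot,\globalset_V)<r\}$ for some $r>0$; if $d(y_0,\globalset_V)<r$ then $y_0\in S_c$, and forward invariance gives $y_n\in S_c\subset\{d(\cdot,\globalset_V)<\epsilon\}$ for all $n$. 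The same displayed fact also supplies the converse used at the very end: $V(y_n)\to V^*$ forces $d(y_n,\globalset_V)\to 0$, since $V(y_n)<c_\epsilon$ eventually.

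For attractivity I would use stability to trap the trajectory. Pick $\epsilon$ small enough that the closed tube $\{d(\cdot,\globalset_V)\le\epsilon\}$ lies inside a neighbourhood of $\globalset_V$ on which a \emph{uniformized} KL inequality holds (desingularizing function $\varphi$, threshold $\delta$, critical value $V^*$); let $\gamma$ be the associated stability radius, further shrunk so that $d(y_0,\globalset_V)<\gamma\Rightarrow V(y_0)<V^*+\delta$. Then $(y_n)$ stays in the tube, hence is bounded, and $\bigl(V(y_n)\bigr)$ decreases to some $\ell\in[V^*,V(y_0))\subset[V^*,V^*+\delta)$. By Theorem \ref{LCR_accumulation} (resp. Theorem \ref{LCM_accumulation}) the bounded sequence has an accumulation point $y^\sharp\in\zerodot$; it lies in the closed tube and satisfies $V(y^\sharp)=\ell$ by continuity, and the hypothesis $\dot V=0\Rightarrow\nabla V=0$ gives $\nabla V(y^\sharp)=0$. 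If $\ell>V^*$, then $V^*<V(y^\sharp)=\ell<V^*+\delta$ with $y^\sharp$ in the tube, so the KL inequality $\varphi'(V(y^\sharp)-V^*)\,\|\nabla V(y^\sharp)\|\ge 1$ forces $\nabla V(y^\sharp)\neq 0$, a contradiction. Hence $\ell=V^*$, i.e. $V(y_n)\to V^*$, and the geometric fact above yields $d(y_n,\globalset_V)\to 0$.

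I expect the two delicate points to be the following. First, the \textbf{uniformization of KL over the compact set} $\globalset_V$: the property is naturally local around a single minimizer, and promoting it to a single $\varphi$ and $\delta$ valid on an entire tube requires a finite subcover of $\globalset_V$ by KL neighbourhoods and a uniform desingularizing function — this is the step that makes precise the paper's remark that the KL hypothesis ``forces $\globalset_V$ to be isolated''. Second, the \textbf{LCM case}, where Theorem \ref{LCM_accumulation} guarantees only \emph{one} accumulation point in $\zerodot$; this is nevertheless enough, because $\bigl(V(y_n)\bigr)$ converges, so that single point already pins the limit value to $\ell$, and the KL argument forces $\ell=V^*$ without any control over the remaining accumulation points.
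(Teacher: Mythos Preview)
Your argument is correct. For stability it is essentially a repackaging of the paper's proof: where the paper builds comparison functions $\alpha_1,\alpha_2$ with $\alpha_1(d(y,\globalset_V))\le V(y)\le\alpha_2(d(y,\globalset_V))$ and takes $r=\alpha_2^{-1}\circ\alpha_1(\epsilon)$, you work directly with sublevel sets and the quantity $c_\epsilon$, which is just $\psi(\epsilon)$ (the paper's $\alpha_1$ before regularisation) in disguise. For attractivity the organisation is genuinely different. The paper first proves a separation lemma --- there exists $\gamma>0$ with $(\globalset_\gamma\setminus\globalset_V)\cap\zerodot=\emptyset$ --- by a contradiction sequence converging to some $y^*\in\globalset_V$ and applying the \emph{local} KL inequality at that single limit point; it then shows $d(y_n,\zerodot)\to 0$ and combines the two facts. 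You instead fix the limit value $\ell=\lim V(y_n)$: one accumulation point $y^\sharp\in\zerodot$ already has $V(y^\sharp)=\ell$ and $\nabla V(y^\sharp)=0$, and KL rules out $\ell>V^*$, whence $V(y_n)\to V^*$ and your geometric fact finishes. Your route is more economical and, as you rightly stress, handles LCM cleanly since Theorem~\ref{LCM_accumulation} supplies exactly the single accumulation point you need; by contrast the paper's step ``$d(y_n,\zerodot)\to 0$'' is not fully justified for LCM by Theorem~\ref{LCM_accumulation} alone. The cost of your approach is the uniformised KL inequality over the compact set $\globalset_V$ (one desingularising $\varphi$ and one $\delta$ valid on a whole tube), which you correctly flag as the delicate point; the paper's separation lemma sidesteps uniformisation because the contradiction sequence converges to a specific point of $\globalset_V$, so only the local KL neighbourhood of that point is needed.
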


\begin{proof}
    Without loss of generality we can assume that the global minimum value of $V$ is zero. We can note that $\globalset_V$ is compact. \\ 
    To show that it is closed, let us take a convergent sequence $(y_n)_{n\in \mathbb{N}} \in \globalset_V^{\mathbb{N}}$: $y_n \to y^*$. For all $n\geq 0$, $y_n \in \globalset_V$ so $V(y_n)=0$ and by continuity of $V$: $V(y^*)=0$. Then $y^* \in \globalset_V$. If $\globalset_V$ is not bounded, there exists a sequence $(y_n)_{n\in \mathbb{N}} \in \globalset_V^{\mathbb{N}}$ such that $y_n \to +\infty$. As $V$ is radially unbounded, $V(y_n) \to \infty$. This is a contradiction since for all $n\geq 0$, $V(y_n)=0$. \\
    Now we will deal with the stability. To do so, let us prove the existence of two maps $\alpha_1$ and $\alpha_2$ which are continuous and strictly increasing satisfying: 
    \begin{equation*}
        \left\{
        \begin{array}{ll}
             \alpha_1(0)=\alpha_2(0)=0,  \\\\
             \forall y\in \Rb^m, \alpha_1(d(y,\globalset_V)) \leq V(y) \leq \alpha_2(d(y,\globalset_V)).
        \end{array}
        \right.
    \end{equation*}
    Define the increasing map:
    \begin{equation*}
        \psi(s)=\inf \{V(y), d(y,\globalset_V) \geq s\},
    \end{equation*}
    that exists because $V$ is lower bounded by zero. As $V$ is continuous, $\psi$ is continuous. Since $\globalset_V$ is closed, $d(y,\globalset_V)=0 \Leftrightarrow y\in \globalset_V$. Then, for $s>0$ $\psi(s)>0$. Then we can build a map $\alpha_1$ strictly increasing with $\alpha_1(0)=0$ such that:
    \begin{equation*}
        \forall s\geq 0, \frac{\psi(s)}{2} \geq \alpha_1(s). 
    \end{equation*}
    By definition of $\psi$, we get: 
    \begin{equation*}
        V(y) \geq \psi(d(y,\globalset_V)) \geq \alpha_1(d(y,\globalset_V)).
    \end{equation*}
    For the second map, let us note that the set $\{y\in \Rb^m, d(y,\globalset_V)\leq s\}$ is compact since $\globalset_V$ is compact, so the map $\tilde{\psi}(s)=\sup \{V(y), d(y,\globalset_V)\leq s\}$ is well-defined. We build $\alpha_2$ as below by imposing that $2\tilde{\psi}(s) \leq \alpha_2(s)$ for all $s\geq 0$. \\\\
    To prove the stability, let $\epsilon>0$. Define $r=\alpha_2^{-1}\circ \alpha_1(\epsilon)$. As $\dot{V}$ is negative, the sequence $(V(y_n))_{n\in \mathbb{N}}$ is decreasing and we can write:
    \begin{equation*}
        \alpha_1(d(y_n,\globalset_V)) \leq V(y_n) \leq V(y_0) \leq \alpha_2(d(y_0,\globalset_V)).
    \end{equation*}
    As a result, we get:
    \begin{equation*}
        d(y_n,\globalset_V) \leq (\alpha_1^{-1}\circ \alpha_2)(d(y_0,\globalset_V)) \leq (\alpha_1\circ \alpha_2)(\epsilon) = \epsilon.
    \end{equation*}
    Now let us tackle the attractivity.
    Denote by $\globalset_{\epsilon}$ the $\epsilon$-neighborhood of $\globalset_V$: 
    \begin{equation*}
        \globalset_{\epsilon}=\{y \in \Rb^m, d(\globalset_V,y)<\epsilon\}. 
    \end{equation*}
    Let us first prove the following: 
    \begin{equation*}
        \exists \gamma>0, \forall y\in \globalset_{\gamma}\setminus \globalset_V, \dot{V}(y)\neq 0. 
    \end{equation*}
    By contradiction, we can build a sequence $(y_n)_{n\in \mathbb{N}}$ such that:
    \begin{equation*}
        \left\{
        \begin{array}{ll}
             \forall n\geq 0, y_n \notin \globalset_V,  \\
              \forall n\geq 0, \dot{V}(y_n)=0, \\
              d(y_n,\globalset_V) \to 0.
        \end{array}
        \right.
    \end{equation*}
    As $\globalset_V$ is bounded, the sequence $(y_n)_{n\in \mathbb{N}}$ is bounded and we can extract a convergent subsequence: $y_{\psi(n)} \to y^* \in \globalset_V$. So there exists $n_0\geq 0$ such that $y_{\psi(n_0)}$ lies in the neighborhood $U$ of $y^*$ where we can apply the KL inequality ($y^*$ is a critical point):
    \begin{equation*}
        \|\nabla V(y_{\psi(n_0)})\| \geq \dfrac{1}{\phi'\left(V(y_{\psi(n_0)}). \right)}.
    \end{equation*}
    As $\phi'>0$, $\nabla V(y_{\psi(n_0)}) \neq 0$ so $\dot{V}(y_{\psi(n_0)}) \neq 0$, as ($\dot{V}=0\implies \nabla V=0$). This is a contradiction since $\dot{V}(y_{\psi(n_0)})=0$.\\
    Now we consider $(y_n)_{n\in \mathbb{N}}$ the sequence generated by LCR or LCM. First, let us show that $d(y_n,\zerodot) \to 0$. \\ 
    By contradiction assume that $\limsup d(y_n,\zerodot)>0$. Then:
    \begin{equation*}
        \exists \epsilon>0, \exists y_{\psi(n)}, \forall n\geq 0, d(y_{\psi(n)},\zerodot) \geq \epsilon. 
    \end{equation*}
    By Theorems \ref{LCR_accumulation} and \ref{LCM_accumulation}, there exists a convergent subsequence $(y_{\psi\circ\phi(n)})_{n\in \mathbb{N}}$ such that $y_{\psi\circ\phi(n)} \to y^* \in \zerodot$. This contradicts the inequality $d(y_{\psi(n)},\mathcal{\zerodot}) \geq \epsilon$.\\
    To conclude apply the stability definition with $\epsilon=\frac{\gamma}{2}$ to obtain a real $r>0$ such that: $d(y_0,\globalset_V)<r \implies \forall n\geq 0, d(y_n,\globalset_V)<\frac{\gamma}{2}$. The trajectory stays in $\globalset_{\gamma/2}$ and we have proved that $\left(\globalset_{\gamma/2}\setminus \globalset_V \right)\cap \zerodot=\emptyset$ which is sufficient to get the attractivity. 
\end{proof}

\begin{example}
    In the same manner than the previous subsection, this theorem can be applied with the optimizers of example \ref{local_gd} and \ref{local_rms_prop}. For RMSProp, we have: $\forall \theta\in \Rb^N, \forall s\in \Rb_+^N$, $V(s,\theta) \geq \R(\theta)$ so if we reach the global minima of $V$, we have attained the global minima of $\R$. In example \ref{ex_Momentum_point}, $V$ is positive, radially unbounded and $\dot{V}$ is negative so the set $\globalset_V$ is stable. However, the attractive part can not be applied since $\dot{V}=0$ does not imply $\nabla V=0$. 
\end{example}

\section{Global convergence of LCR and LCM}
\label{section_cv}

In the last sections, attractivity and stability properties of the algorithms LCR and LCM have been discussed. These qualitative behaviors are essential to get a good optimizer. In particular, the examples of the previous section allow stating the stability of the set of global minima for several optimizers combined with LCR and LCM backtracking: if they are initialized in a neighborhood of this set, the sequence generated by the algorithm converges to $\globalset_V$. But what happens if the initialization is far away from this neighborhood? This is the problem of the "global" convergence of the process towards a critical point. \\

First in subsection \ref{case_gd}, a convergence result stated in \cite{Absil} and \cite{Rondepierre} for descent algorithms (GD) is investigated in the sense that we obtain convergence rates. Finally, in subsection \ref{abstract_rez}, an abstract convergence framework concerning 
%{any optimizer combined with} Gael tu vends du rêve 
LCR or LCM is introduced with an interesting application. From now on, LCR and LCM will be treated as one.

\subsection{The case of GD}
\label{case_gd}

The authors in \cite{Rondepierre} extends the result of \cite{Absil} from Lojasiewicz to KL functions in the case where $F=\R$ and $V=\R$. We will add to this theorem an estimation of the convergence rate. 
In practice, this is done by following the first same steps as in \cite{Absil}. To compute this estimation, we first need to generalize a discrete Gronwall lemma stated below:    

\begin{lemma}[Non-Linear Gronwall Lemma]
    Let $(v_n){_n\in\mathbb{N}}$ be a positive sequence and $(u_n){_n\in\mathbb{N}}$ such that for all $n \in \mathbb{N}, u_{n+1}-u_n \leq -v_n \psi(u_n)$ where $\psi$ is a continuous strictly positive and increasing function. Then:
    \begin{equation*}
        u_n \leq \Psi^{-1}\left( \Psi(u_0) - \displaystyle{\sum_{k=0}^{n-1}}v_k\right),
    \end{equation*}
    where $\Psi$ is a primitive of $\frac{1}{\psi}$.
    \label{bihari_lemma}
\end{lemma}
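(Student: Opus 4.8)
The plan is to reproduce, at the discrete level, the classical continuous Bihari argument: if $u'(t) \le -v(t)\psi(u(t))$ then dividing by $\psi(u)$ and integrating yields $\Psi(u(t)) \le \Psi(u_0) - \int_0^t v$. Here the primitive $\Psi$ of $1/\psi$ is the natural object, because $\psi$ is strictly positive, so $\Psi' = 1/\psi > 0$ makes $\Psi$ strictly increasing, hence a bijection onto its image with increasing inverse $\Psi^{-1}$. The whole statement will therefore follow once I establish the single inequality $\Psi(u_n) \le \Psi(u_0) - \sum_{k=0}^{n-1} v_k$ and apply the monotone map $\Psi^{-1}$ to both sides.

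First I would record a preliminary monotonicity fact: since $v_n \ge 0$ and $\psi > 0$, the hypothesis $u_{n+1} - u_n \le -v_n\psi(u_n) \le 0$ shows that $(u_n)_{n\in\mathbb{N}}$ is non-increasing, so $u_{n+1} \le u_n$ for every $n$. This is exactly what lets me compare the discrete increment of $\Psi$ with one step of the recursion.

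The heart of the proof is the per-step estimate $\Psi(u_{n+1}) - \Psi(u_n) \le -v_n$. I would obtain it by writing the increment as an integral, $\Psi(u_{n+1}) - \Psi(u_n) = \int_{u_n}^{u_{n+1}} \frac{dt}{\psi(t)} = -\int_{u_{n+1}}^{u_n} \frac{dt}{\psi(t)}$, using $u_{n+1} \le u_n$. On the interval $[u_{n+1}, u_n]$ the monotonicity of $\psi$ gives $\psi(t) \le \psi(u_n)$, hence $1/\psi(t) \ge 1/\psi(u_n)$, so that $\int_{u_{n+1}}^{u_n} \frac{dt}{\psi(t)} \ge (u_n - u_{n+1})/\psi(u_n)$. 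Combining this with the hypothesis rewritten as $u_n - u_{n+1} \ge v_n\psi(u_n)$ yields $(u_n - u_{n+1})/\psi(u_n) \ge v_n$, and therefore $\Psi(u_{n+1}) - \Psi(u_n) \le -v_n$.

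Finally I would telescope: summing the per-step estimate over $k = 0, \dots, n-1$ gives $\Psi(u_n) - \Psi(u_0) \le -\sum_{k=0}^{n-1} v_k$, and applying the increasing bijection $\Psi^{-1}$ delivers the claimed bound. The main obstacle is really this per-step estimate — more precisely, turning the discrete difference $\Psi(u_{n+1}) - \Psi(u_n)$ into a quantity controlled by $v_n$ \emph{without} any smallness assumption on the step $u_n - u_{n+1}$; the monotonicity of $\psi$ is precisely what makes the one-sided comparison with the constant value $\psi(u_n)$ valid over the entire interval $[u_{n+1},u_n]$. A secondary point to keep in mind is the domain of $\Psi^{-1}$: the final bound is meaningful as long as $\Psi(u_0) - \sum_{k=0}^{n-1} v_k$ remains in the range of $\Psi$, which is automatic here since the left-hand side $\Psi(u_n)$ always lies in that range.
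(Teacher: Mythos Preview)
Your proof is correct and follows essentially the same approach as the paper: establish the per-step inequality $\Psi(u_{n+1})-\Psi(u_n)\le -v_n$ using the monotonicity of $\psi$ on $[u_{n+1},u_n]$, then telescope and apply the increasing inverse $\Psi^{-1}$. The only cosmetic difference is that the paper phrases the comparison via the mean value theorem (writing the integral as $(u_{n+1}-u_n)/\psi(c)$ for some $c\in[u_{n+1},u_n]$) whereas you bound the integrand directly; both arguments exploit exactly the same monotonicity, and your version is arguably the cleaner of the two.
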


\begin{proof}
    As $\psi$ is increasing and by invoking the mean value theorem, there exists $c\in \Rb$ such that $u_{n+1} \leq c \leq u_n$ satisfying:
    \begin{equation*}
        \Psi(u_{n+1}) - \Psi(u_n) = \int_{u_n}^{u_{n+1}} \frac{1}{\psi(x)}dx = \dfrac{u_{n+1}-u_n}{\psi(c)} \leq \dfrac{-v_n \psi(u_n)}{\psi(u_{n+1})} \leq -v_n.
    \end{equation*}
    We sum the term for $k=0$ to $k=n-1$ to obtain:
    \begin{equation*}
        \Psi(u_n) \leq \Psi(u_0)-\displaystyle{\sum_{k=0}^{n-1}}v_k.
    \end{equation*}
    Furthermore, $\Psi$ is a continuous strictly increasing function hence it admits an increasing inverse $\Psi^{-1}$ on its codomain. Applying $\Psi^{-1}$ on the previous inequality yields the result.
\end{proof}

Equipped with this inequality, we ca now give a %more accurate 
detailed result about the convergence of GD with backtracking.  

\begin{theorem}[GD Estimation]
    \label{estimate_theta}
    Let $\R\in \mathcal{C}^1(\Rb^N)$ and $\nR$ locally Lipshitz. Assume that $\R$ satisfies the KL condition at the neighborhood of {its} critical points.
    Then, either $\displaystyle{\lim_{n\to + \infty}} \|\theta_n\|=+\infty$, or there exists $\theta^* \in \mathcal{E}$ such that:
    \begin{equation*}
        \displaystyle{\lim_{n\to + \infty}} \|\theta_n-\theta^*\|=0.
    \end{equation*}
    Furthermore there exists $n_0$ such that for all $n \geq n_0$:
    \begin{equation}
        \|\theta_n-\theta^*\| \leq \frac{1}{\lambda} \phi\left(\Psi^{-1}\left(\Psi(\R(\theta_0)-\R(\theta^*)) - \displaystyle{\sum_{k=0}^{n-1}} \eta_k\right)\right),
        \label{convergence_theta LC_EGD2}
    \end{equation}
     where $\Psi$ is a primitive of $\phi'^2$ on $]0,\gamma[$ and $\gamma$ is given %involved 
     in the definition \eqref{KL_inequality} of KL functions.
\end{theorem}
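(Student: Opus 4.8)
The plan is to run the standard Kurdyka--Lojasiewicz (KL) descent argument, adapted to the backtracked step sizes $\eta_n$, and then to feed the resulting scalar recursion into the non-linear Gronwall Lemma~\ref{bihari_lemma} to extract the rate. Throughout, $V=\R$, $F=-\nR$, so $\theta_{n+1}=\theta_n-\eta_n\nR(\theta_n)$ and $\dot V=-\|\nR\|^2$, and the dissipation inequality \eqref{dissipation_inequality} reads $\R(\theta_{n+1})-\R(\theta_n)\leq -\lambda\eta_n\|\nR(\theta_n)\|^2$.

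First I would settle the dichotomy and the convergence of the iterates. The sequence $(\R(\theta_n))_n$ is non-increasing. If $\|\theta_n\|\nrightarrow +\infty$, some subsequence $(\theta_{\sigma(n)})_n$ stays bounded and has an accumulation point $\theta^*$; a non-increasing sequence with a convergent subsequence converges, so $\R(\theta_n)\to \R(\theta^*)=:\ell$ and $\R(\theta_n)\geq \ell$ for all $n$. Telescoping the dissipation inequality then yields $\sum_n \eta_n\|\nR(\theta_n)\|^2<+\infty$, hence $\eta_n\|\nR(\theta_n)\|^2\to 0$. In a compact neighborhood of $\theta^*$ on which $\nR$ is $L_K$-Lipschitz, the step-size control of Proposition~\ref{LCEGD_accumulation} (and its LCR analogue) furnishes a uniform lower bound on $\eta_{\sigma(n)}$ once the subsequence enters that neighborhood, so $\nR(\theta^*)=0$, i.e. $\theta^*\in\statio=\critic_\R$. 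After shifting so that $\R(\theta^*)=0$ I may assume $\R(\theta_n)>0$ for every $n$, since otherwise the iterates are eventually stationary and the claim is immediate.

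The heart of the matter is the path-length estimate. Using the KL inequality $\phi'(\R(\theta))\|\nR(\theta)\|\geq 1$ on a neighborhood $U$ of $\theta^*$, the concavity of $\phi$, and the dissipation inequality,
\begin{equation*}
\begin{aligned}
\phi(\R(\theta_n))-\phi(\R(\theta_{n+1})) &\geq \phi'(\R(\theta_n))\bigl(\R(\theta_n)-\R(\theta_{n+1})\bigr)\\
&\geq \lambda\,\phi'(\R(\theta_n))\,\eta_n\|\nR(\theta_n)\|^2\geq \lambda\,\eta_n\|\nR(\theta_n)\|=\lambda\,\|\theta_{n+1}-\theta_n\|,
\end{aligned}
\end{equation*}
so that $\|\theta_{n+1}-\theta_n\|\leq \tfrac1\lambda[\phi(\R(\theta_n))-\phi(\R(\theta_{n+1}))]$. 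Summing telescopes and bounds the total path length, making $(\theta_n)$ a Cauchy sequence that converges to $\theta^*$. The step I expect to be the main obstacle is the \emph{trapping} (uniformization) argument guaranteeing that the whole tail stays inside $U$, so that the KL inequality may legitimately be invoked at every index: I would fix $n_0$ with $\theta_{n_0}$ close to $\theta^*$ and $\|\theta_{n_0}-\theta^*\|+\tfrac1\lambda\phi(\R(\theta_{n_0}))$ less than the radius of a ball contained in $U$, then propagate ball-membership by induction using the telescoped estimate. The non-constant, merely backtracked step sizes make this bookkeeping heavier than in the constant-step case, and the lower bound on $\eta_n$ must be produced locally, via the Lipschitz-based control in the proof of Proposition~\ref{LCEGD_accumulation}.

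For the rate I would pass to the tail of the path-length bound, $\|\theta_n-\theta^*\|\leq \sum_{k\geq n}\|\theta_{k+1}-\theta_k\|\leq \tfrac1\lambda\phi(\R(\theta_n)-\R(\theta^*))$, using that $\phi$ is increasing with $\phi(0)=0$. Writing $u_n=\R(\theta_n)-\R(\theta^*)$, the dissipation inequality together with the KL bound $\|\nR(\theta_n)\|^2\geq 1/\phi'(u_n)^2$ gives the scalar recursion $u_{n+1}-u_n\leq -\lambda\eta_n/\phi'(u_n)^2$. Since $\phi$ is concave, $\phi'$ is non-increasing, so $\psi:=1/\phi'^2$ is continuous, strictly positive and increasing; Lemma~\ref{bihari_lemma} then applies with $v_n$ a constant multiple of $\eta_n$ and $\Psi$ a primitive of $\phi'^2$, yielding $u_n\leq \Psi^{-1}\bigl(\Psi(u_0)-\sum_{k=0}^{n-1}\eta_k\bigr)$. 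Composing with the increasing map $\phi$ and the factor $1/\lambda$ delivers exactly the estimate \eqref{convergence_theta LC_EGD2}.
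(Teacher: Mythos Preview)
Your proposal is correct and follows essentially the same route as the paper's proof: identify an accumulation point $\theta^*\in\statio$ via the step-size lower bound of Proposition~\ref{LCEGD_accumulation}, combine the dissipation inequality with the KL inequality and the concavity of $\phi$ to obtain the telescoping path-length bound $\|\theta_{n+1}-\theta_n\|\leq\tfrac1\lambda[\phi(\R(\theta_n))-\phi(\R(\theta_{n+1}))]$, run the trapping argument to keep the tail inside $\mathcal U$, and then feed $u_n=\R(\theta_n)-\R(\theta^*)$ into Lemma~\ref{bihari_lemma} with $\psi=1/\phi'^2$ before composing with $\phi$. The only cosmetic difference is that you invoke concavity before the KL bound in the path-length estimate, whereas the paper applies KL first and then integrates $\phi'$; the ingredients and the resulting inequalities are identical.
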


\begin{proof}
    Let us detail the case when $(\theta_n)_{n\in\mathbb{N}}$ does not diverge to infinity. Then the sequence is bounded and admits an accumulation point $\theta^* \in \mathcal{E}$ according to proposition \ref{LCEGD_accumulation}. The goal is to prove that $\theta_n \to \theta^*$.\\
    The sequence $(\R(\theta_n))_{n\in\mathbb{N}}$ is decreasing and {lower} bounded by 0. So it converges to some real $l$. Without loss of generality let us assume that $l=\R(\theta^*)=0$. If the sequence $(\theta_n)_{n\in\mathbb{N}}$ is eventually constant then the result is straightforward. Otherwise let us remove all the indices such that $\theta_{n+1}=\theta_n$. \\
    Now note that:
    \begin{equation*}
        \R(\theta_{n+1})=\R(\theta_n) \Rightarrow \theta_{n+1}=\theta_n.
    \end{equation*}
    Indeed, we have $\R(\theta_{n+1})-\R(\theta_n)\leq -\lambda \eta_n \|\nR(\theta_n)\|^2 \leq 0$. As $\R(\theta_{n+1})=\R(\theta_n)$ it follows $\eta_n\nR(\theta_n)=0$. As $\eta_n$ is strictly positive it comes: $\nR(\theta_n)=0$. Then $\theta_{n+1}=\theta_n$.\\
    As a result $\R(\theta_n)$ is strictly decreasing and $\R(\theta_n)>0$. \\
    
    Provided that $\theta_n \in \mathcal{U}$ (where $\mathcal{U}$ is the neighborhood of $\theta^*$ where we can apply the KL inequality), we have:
    \begin{equation*}
        \R(\theta_n)-\R(\theta_{n+1}) \geq \lambda \|\nR(\theta_n)\|\|\theta_{n+1}-\theta_n\| \geq \lambda \|\theta_{n+1}-\theta_n\| \dfrac{1}{\phi'(\R(\theta_n))}.
    \end{equation*}
    Given that $\phi'(\R(\theta_n))>0$:
    \begin{equation}
        \label{eq:in1}
        \|\theta_{n+1}-\theta_n\| \leq \phi'(\R(\theta_n)) \dfrac{\R(\theta_n)-\R(\theta_{n+1})}{\lambda}.
    \end{equation}
    Since $\forall x \in \left[ \R(\theta_{n+1}),\R(\theta_n) \right]$, $\phi'(\R(\theta_n)) \leq \phi'(x)$ (as $\phi$ is concave), we deduce:
    \begin{multline*}
    \left(\R(\theta_n)-\R(\theta_{n+1})\right) \phi'(\R(\theta_n)) = 
    \displaystyle{\int_{\R(\theta_{n+1})}^{\R(\theta_n)}} \phi'(\R(\theta_n)) dx \leq \displaystyle{\int_{\R(\theta_{n+1})}^{\R(\theta_n)}} \phi'(x)dx = \phi(\R(\theta_n))-\phi(\R(\theta_{n+1})).
    \end{multline*}
    Then, provided $\theta_n \in \mathcal{U}$, using this last inequality in conjunction with \eqref{eq:in1}, we get:
    \begin{equation*}
        \|\theta_{n+1}-\theta_n\| \leq \dfrac{\phi(\R(\theta_n))-\phi(\R(\theta_{n+1}))}{\lambda}.
    \end{equation*}
Given that $p>q$ such that $\theta_p, \cdots, \theta_{q-1} \in \mathcal{U}$ it follows:
\begin{equation*}
    \displaystyle{\sum_{n=p}^{q-1}}\|\theta_{n+1}-\theta_n\| \leq \dfrac{\phi(\R(\theta_p))-\phi(\R(\theta_{q}))}{\lambda}.
\end{equation*}
Now, let $r>0$ be such that $B_r(\theta^*) \subset \mathcal{U}$. Given that $\theta^*$ is an accumulation point of $(\theta_n)_{n\in\mathbb{N}}$ and $(\phi(\R(\theta_n)))_{n\in\mathbb{N}}$ converges to $\phi(0)=0$, there exists $n_0$ such that:
\begin{equation*}
    \begin{array}{lll}
    \|\theta_{n_0}-\theta^*\|<\frac{r}{2}, \\\\
    \forall q \geq n_0: \frac{1}{\lambda}\left[\phi(\R(\theta_{n_0}))-\phi(\R(\theta_{q}))\right] < \frac{r}{2}.
    \end{array}
\end{equation*}

It remains to show that $\theta_n \in B_r(\theta^*)$ for all $n>n_0$. By contradiction assume that it is not the case: $\exists n>n_0, \theta_n \notin B_r(\theta^*)$. The set $\left\{ n>n_0, \theta_n \notin B_r(\theta^*)\right\}$ is a non empty {set} bounded by below, subset of $\mathbb{N}$. So we can consider the minimum of this set that we denote by $p$. As a result, $\forall n_0 \leq n < p$, $\theta_n \in \mathcal{U}$ so:
\begin{equation*}
    \displaystyle{\sum_{n=n_0}^{p-1}}\|\theta_{n+1}-\theta_n\| \leq \frac{1}{\lambda} \left[\phi(\R(\theta_{n_0}))-\phi(\R(\theta_{p}))\right] < \frac{r}{2}. 
\end{equation*}
This implies:
\begin{equation*}
    \| \theta_p-\theta^* \| \leq \displaystyle{\sum_{n=n_0}^{p-1}}\| \theta_{n+1}-\theta_n \| + \| \theta_{n_0}-\theta^* \| < r.
    \label{decomposition_theta}
\end{equation*}
This is a contradiction because $\| \theta_p-\theta^* \| \geq r$. As $r$ is arbitrarily small this shows the convergence.\\

Now, in order to obtain the bound on the convergence speed, let us rewrite $\|\theta_n-\theta^*\|$ for $n\geq n_0$:
\begin{equation}
    \|\theta_n-\theta^*\| = \left|\left|\displaystyle{\sum_{k=n}^{+\infty}} (\theta_k-\theta_{k+1})\right|\right| \leq \displaystyle{\sum_{k=n}^{+\infty}} \|\theta_{k+1}-\theta_k\| \leq \frac{1}{\lambda} \displaystyle{\lim_{q \to +\infty}} \left[ \phi(\R(\theta_n))-\phi(\R(\theta_q))\right] = \dfrac{\phi(\R(\theta_n))}{\lambda}.
    \label{theta_R}
\end{equation}

The KL inequality and the dissipation inequality \eqref{dissipation_inequality} are verified for $n\geq n_0$ so:
    \begin{equation*}
        \R(\theta_{n+1}) - \R(\theta_n) \leq -\lambda \eta_n \|\nR(\theta_n)\|^2 \leq -\lambda \eta_n \dfrac{1}{\phi'\left(\R(\theta_n)\right)^2}.
    \end{equation*}
    By applying lemma \ref{bihari_lemma} to the previous inequality with $v_n=\lambda \eta_n$, $u_n=\R(\theta_n)$ and $\psi(x) =\phi'(x)^{-2}$, since $\phi$ is concave strictly increasing, we get:
    \begin{equation}
        \R(\theta_n) \leq \left(\Psi^{-1}\left(\Psi(\R(\theta_0)-\R(\theta^*)) - \displaystyle{\sum_{k=0}^{n-1}} \eta_k\right)\right).
        \label{R_convergence}
    \end{equation}
    The convergence rates come directly by combining inequality \eqref{theta_R} with \eqref{R_convergence} because $\phi$ is increasing. 
\end{proof}

\subsection{A general abstract result}
\label{abstract_rez}

From the previous proof for GD, we can extract an abstract structure that allows to deduce convergence results not restricted to GD. This is the object of the next theorem. It is stated in the Lojasiewicz setting for simplification and the proof, very similar to the one of GD, is presented in appendix \ref{proof_cv}. It can be considered the discrete counterpart of the convergence theorem in \cite{Haraux_autonomous}.

\begin{theorem}[Convergence Framework]
    Let $V \in \mathcal{C}^2(\Rb^m,\Rb^+)$ such that $\dot{V}\leq 0$ and $\left(\nabla V(y)=0 \Rightarrow F(y)=0\right)$. Assume that for all points $y^* \in \mathcal{Z}$, there exists a neighborhood $\mathcal{U}$ of $y^*$, $\gamma \geq 0$, $\alpha_1 \in ]0,1[$, $c,c_1>0$ such that:
    \begin{enumerate}[label=(\roman*)]
        \item $\forall y \in \mathcal{U}$, $\dot{V}(y) \leq -c \|\nabla V(y)\|^{\gamma} \|F(y)\|$,
        \item $\forall y \in \mathcal{U}$, $\|\nabla V(y)\| \geq c_1 (V(y)-V(y^*))^{1-\alpha_1}$,
        \item $\gamma (1-\alpha_1)<1$.
    \end{enumerate}
    Then if we consider algorithms LCR and LCM with the backtracking $V$, all bounded sequences $(y_n){_{n\in\mathbb{N}}}$ converges to $y^* \in \mathcal{Z}$. Assume in addition that there exists $c_2>0$ and $\alpha_2 \leq 1$ satisfying:
    \begin{equation}
        \forall y \in \mathcal{U} \text{ , }\|F(y)\| \geq c_2 (V(y)-V(y^*))^{1-\alpha_2}
        \label{condition_vitesse}
    \end{equation}
    Then there exists $n_0 \in \mathbb{N}$ such that for all $n \geq n_0$ we have:
    \begin{itemize}
        \item If $\dfrac{\alpha_2}{1-\alpha_1} < \gamma$ (\textbf{subexponential}):
        \begin{equation*}
        \|y_n-y^*\| \leq \dfrac{C_1}{\left[\left(V(y_{n_0})-V(y^*)\right)^{\alpha_2-\gamma(1-\alpha_1)}+C_2\displaystyle{\sum_{k=n_0}^{n-1}}\eta_k \right]^{\frac{1-\gamma(1-\alpha_1)}{\gamma(1-\alpha_1)-\alpha_2}}}.
        \label{estimation_puissance}
    \end{equation*}
    \item If $\dfrac{\alpha_2}{1-\alpha_1} = \gamma$ (\textbf{exponential}):
     \begin{equation*}
        \|y_n-y^*\| \leq C_1 \left(V(y_{n_0})-V(y^*)\right)^{1-\gamma(1-\alpha_1)} \exp{ \left( -C_3\displaystyle{\sum_{k=n_0}^{n-1}}\eta_k\right)}.
        \label{estimation_exp}
    \end{equation*}
    \item Finally if $\dfrac{\alpha_2}{1-\alpha_1} > \gamma$ and the sum of $(\eta_k)_{k \in \mathbb{N}}$ diverges, the sequence $(y_n)_{n\in\mathbb{N}}$ converges in \textbf{finite time}, that is to say, $y_n=y^*$ if $n$ satisfies the following inequality:
    \begin{equation*}
        \displaystyle{\sum_{k=n_0}^{n-1}}\eta_k \geq \dfrac{\left(V(y_{n_0})-V(y^*)\right)^{\alpha_2-\gamma(1-\alpha_1)}}{-C_2},
    \end{equation*}
    \end{itemize}
    where:
    \begin{equation*}
        \left\{
        \begin{array}{cc}
            C_1 = \dfrac{1}{\lambda c c_1^{\gamma} \left(1-\gamma(1-\alpha_1)\right)}, \\\\
            C_2 = \lambda cc_1^{\gamma}c_2\left[\gamma(1-\alpha_1)-\alpha_2\right], \\\\
            C_3 = \frac{c_2}{C_1}.
        \end{array}
        \right.
    \end{equation*}
    \label{abstract_cv}
\end{theorem}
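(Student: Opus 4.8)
The plan is to transpose, almost verbatim, the argument of Theorem~\ref{estimate_theta}, replacing the GD identity $\dot V=-\|\nR\|^2$ by the abstract couple of inequalities (i)--(ii), and to reduce the rate estimates to a single application of the non-linear Gronwall Lemma~\ref{bihari_lemma}. Throughout I normalise $V(y^*)=0$, write $W_n \coloneqq V(y_n)-V(y^*)$ and abbreviate $\beta \coloneqq \gamma(1-\alpha_1)$, so that hypothesis~(iii) reads simply $\beta<1$. Observe first that (i) together with the standing assumption $\bigl(\nabla V=0\Rightarrow F=0\bigr)$ forces $\dot V(y)=0\Rightarrow F(y)=0$, hence $\zerodot=\statio$, so any accumulation point produced below is a genuine equilibrium of the flow.

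For the convergence claim I would invoke Theorems~\ref{LCR_accumulation} and~\ref{LCM_accumulation}: a bounded sequence has an accumulation point $y^*\in\zerodot$, and since $(V(y_n))_{n\in\mathbb{N}}$ is non-increasing by~\eqref{dissipation_inequality} and bounded below it converges, with limit $V(y^*)$ by continuity along the extracted subsequence; thus $W_n\to 0$. After discarding the (harmless) indices where $y_{n+1}=y_n$, so that $V$ is strictly decreasing, I would chain the three hypotheses: from $y_{n+1}-y_n=\eta_nF(y_n)$, \eqref{dissipation_inequality} and~(i),
\begin{equation*}
W_n-W_{n+1}\ \ge\ -\lambda\eta_n\dot V(y_n)\ \ge\ \lambda c\,\|\nabla V(y_n)\|^{\gamma}\,\|y_{n+1}-y_n\|,
\end{equation*}
and then~(ii) upgrades this, for $y_n\in\mathcal U$, to $W_n-W_{n+1}\ge \lambda c c_1^{\gamma}W_n^{\beta}\|y_{n+1}-y_n\|$. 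Introducing the increasing map $\varphi(s)\coloneqq s^{1-\beta}$, which vanishes at $0$ precisely because $\beta<1$, the bound $s^{-\beta}\ge W_n^{-\beta}$ on $[W_{n+1},W_n]$ gives $\varphi(W_n)-\varphi(W_{n+1})\ge(1-\beta)W_n^{-\beta}(W_n-W_{n+1})$, whence the telescoping estimate
\begin{equation*}
\|y_{n+1}-y_n\|\ \le\ C_1\bigl(\varphi(W_n)-\varphi(W_{n+1})\bigr),\qquad C_1=\frac{1}{\lambda c c_1^{\gamma}(1-\beta)} .
\end{equation*}
From here convergence follows by exactly the trapping argument of Theorem~\ref{estimate_theta}: choose $r$ with $B_r(y^*)\subset\mathcal U$, then $n_0$ with $\|y_{n_0}-y^*\|<r/2$ and $C_1\varphi(W_{n_0})<r/2$ (possible since $\varphi(W_n)\to 0$), and argue by minimality that the iterates never leave $B_r(y^*)$; the increments are then summable, $(y_n)$ is Cauchy and converges to $y^*\in\zerodot$.

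For the rates I would feed~\eqref{condition_vitesse} into the same chain. Combining \eqref{dissipation_inequality}, (i), (ii) and~\eqref{condition_vitesse} yields, for $y_n\in\mathcal U$,
\begin{equation*}
W_{n+1}-W_n\ \le\ -\lambda c c_1^{\gamma}c_2\,\eta_n\,W_n^{\,\beta+1-\alpha_2},
\end{equation*}
so Lemma~\ref{bihari_lemma} applies with $u_n=W_n$, $v_n=\lambda c c_1^{\gamma}c_2\,\eta_n$ and $\psi(s)=s^{\beta+1-\alpha_2}$, delivering $W_n\le\Psi^{-1}\bigl(\Psi(W_{n_0})-\sum_{k=n_0}^{n-1}v_k\bigr)$ with $\Psi$ a primitive of $1/\psi$. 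The three regimes are governed by the sign of the exponent $\alpha_2-\beta$ of $\Psi$: if $\beta>\alpha_2$ (i.e. $\gamma>\tfrac{\alpha_2}{1-\alpha_1}$) then $\Psi(s)\propto s^{\alpha_2-\beta}\to+\infty$ at $0$ and $W_n$ decays like a power of $\sum\eta_k$ (subexponential); if $\beta=\alpha_2$ then $\Psi=\ln$ and $W_n$ decays exponentially; if $\beta<\alpha_2$ the primitive stays finite at $0$, so once $\sum\eta_k$ exceeds the explicit threshold $W_n$ reaches $0$, i.e. $y_n=y^*$ in finite time (this is where $\sum\eta_k=+\infty$ is used). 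Each displayed estimate then follows by inserting the bound on $W_n$ into $\|y_n-y^*\|\le C_1\varphi(W_n)=C_1 W_n^{\,1-\beta}$ — obtained by letting $q\to\infty$ in the telescoped sum $\sum_{k=n}^{q-1}\|y_{k+1}-y_k\|$ — and the constants $C_1,C_2,C_3$ are read off from $\Psi$ and $\varphi$.

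The delicate point, exactly as for GD, is not the algebra but the passage from one accumulation point to convergence of the \emph{whole} sequence: one must certify that the iterates remain inside the KL-neighbourhood $\mathcal U$ at every step, for otherwise the telescoping estimate — and with it summability of the increments — breaks down. This is precisely what the trapping argument secures, and it relies essentially on $\varphi(0)=0$, hence on hypothesis~(iii) $\gamma(1-\alpha_1)<1$; by contrast, once~\eqref{condition_vitesse} is available the rate computation is a mechanical case split on $\alpha_2-\beta$ through Lemma~\ref{bihari_lemma}.
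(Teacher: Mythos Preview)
Your proposal is correct and follows essentially the same route as the paper: accumulation point via Theorems~\ref{LCR_accumulation}/\ref{LCM_accumulation}, the chain \eqref{dissipation_inequality}+(i)+(ii) to get a telescoping bound on $\|y_{n+1}-y_n\|$ in terms of $V(y_n)^{1-\gamma(1-\alpha_1)}$, the trapping argument to stay in $\mathcal{U}$, and then the power-Gronwall inequality for the rates. The only cosmetic difference is that you invoke Lemma~\ref{bihari_lemma} directly on $\psi(s)=s^{\beta+1-\alpha_2}$, whereas the paper first records its power-function corollary (Lemma~\ref{gronwall_lemma}) and applies that; the content is identical.
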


\begin{remarknot}
    As in the case of KL inequality \eqref{convergence_theta LC_EGD2} the different inequalities can hold only for $y \in \mathcal{U}\setminus \{y^*\}$.
\end{remarknot}
As an application of this theorem, an in-depth study of the parameter $p>1$ for pGD is proposed.  
\begin{example}
    \label{pgd_cv}
    Assume $\R \in \mathcal{C}^2(\Rb^N)$ and Lojasiewicz at any critical point. Let us take the same Lyapunov function as in the example \ref{ex_pGD_point} for the pGD update. We have already shown that $\dot{V}(\theta)\leq 0$ for all $\theta \in \Rb^N$. The implication $\left(\nabla V=0 \implies F=0\right)$ holds. The condition i) becomes an equality:
    \begin{equation*}
        \dot{V}(\theta) = - \|\nR(\theta)\|^{\frac{p}{p-1}} = - \|\nR(\theta)\|\|\nR(\theta)\|^{\frac{1}{p-1}} = -\|\nabla V(\theta)\| \|F(\theta)\|.
    \end{equation*}
    So $c=1$ and $\gamma=1$. The second condition ii) is just the Lojasiewicz inequality where $\alpha_1$ denotes the Lojasiewicz coefficient, $c$ the Lojasiewicz constant and $\mathcal{U}$ the neighborhood of a critical point of $\R$ where the Lojasiewicz inequality is valid. As $\alpha_1<1$ the third condition iii) is clearly true. Hence the \textbf{convergence of the sequence $(\theta_n)_{n\geq 0}$ to a point of $\mathcal{Z}=\mathcal{E}$ is insured}. Now it remains to handle the inequality \eqref{condition_vitesse} for $\theta \in \mathcal{U}$:
    \begin{equation*}
        \|F(\theta)\| = \|\nR(\theta)\|^{\frac{1}{p-1}} \geq c_1^{\frac{1}{p-1}} \left[\R(\theta)-R(\theta^*)\right]^{\frac{1-\alpha_1}{p-1}} = c_1^{\frac{1}{p-1}} \left[V(\theta)-V(\theta^*)\right]^{\frac{1-\alpha_1}{p-1}}.
    \end{equation*}
    by using again Lojasiewicz inequality. Then $c_2 = c_1^{\frac{1}{p-1}}>0$ and $\alpha_2 = \dfrac{\alpha_1+p-2}{p-1} \leq 1$ because $\alpha_1<1$. Therefore by comparing $\frac{\alpha_2}{1-\alpha_1}$ to 1 we obtain:
    \begin{itemize}
        \item If $\alpha_1<\frac{1}{p}$, the convergence is subexponential.
        \item If $\alpha_1 = \frac{1}{p}$, the convergence is exponential.
        \item If $\alpha_1 > \frac{1}{p}$ and the sum of time steps diverges, the sequence $(\theta_n)_{n\geq 0}$ converges in finite time. 
    \end{itemize}
\end{example}

\section{Conclusion}\label{conclusion}

In this paper, we investigate two non constant step size policies applied to any ML optimizer that can be considered as the discretisation of an ODE (GD, pGD, Momentum, RMSProp,...). These policies can be seen as the generalization of the backtracking Armijo rule in the optimization community or as Lyapunov stabilization in the control theory. \\
In this framework, the most challenging part concerns the localization of the accumulation points of the sequence generated by the optimizer. This fact seems obvious when the time step is constant and is well documented \cite{LaSalle_discrete, Lyapunov_discrete, Lyapunov_inria, Lyapunov_non_monotonic}, when the sequence is generated by a continuous map (the function $h$ in \eqref{h} is continuous). But our time step policies are far from being continuous and we have to use recent results of selection theory to overcome this problem. \\
Despite this supplementary technical difficulty, these strategies have great qualitative properties: local stability of isolated and global minimums 
and strong global convergence (convergence of the sequence of iterates) to the set where the Lyapunov derivative is zero (for some ODEs, this set is exactly the set of stationary points). This holds for \textbf{any choice of hyperparameters}. This is precisely the main benefit of these methods since contrary to constant step size algorithms, the user does not have to tune hyperparameters (this may be very hard, see \cite{Lipshitz_constant}) to ensure these properties.\\ 
Finally, asymptotic convergences rates are derived depending on the Lojasiewicz coefficient and lead to an exhaustive study of the asymptotic behavior of the pGD optimizer. \\
Some questions remain still open for these backtracking policies:
\begin{itemize}
    \item Could we expect that all the accumulation points of LCM lie in $\zerodot$? This problem is closely related to the construction of a continuous selection on the whole domain $\Rb^m$ (see remark \ref{remark_LCM_zerodot}).
    \item Could we prove a partial stability result for LCM (see the discussion at the end of the subsection \ref{local_stability})?
    \item For some ODEs, the convergence to $\zerodot$ implies the convergence of the variable of interest $(\theta_n)_{n\in \mathbb{N}}$ to a first order stationary point: it can be a local minimum/maximum or a saddle point. An interesting research perspective will be to investigate the convergence to local minimums that has been done in \cite{sd_gd_escape} for constant step size gradient descent. 
\end{itemize}

\backmatter

\section*{Funding}
This work was supported by CEA/CESTA and LRC Anabase. 

\section*{Conflict of interest}
The authors declare that they have no conflict of interest.

\begin{appendices}

\section{Selection theory: notions and some theorems}
\label{selection_theory}

The next definitions and theorems about selection theory are mainly taken from \cite{infinite_dimensional_analysis,selection_book}. Selection theory concerns set value map between topological spaces. Fixing two topological spaces $X$ and $Y$ consider a set value map $\phi: X \mapsto \mathcal{P}(Y)$. A selection of $\phi$ is a map $s:X \mapsto Y$ such that:
\begin{equation*}
    \forall x \in X, s(x) \in \phi(x).
\end{equation*}
If $\phi$ does not have the empty set as a value, this application $s$ exists due to the axiom of choice. The main goal of selection theory is to give conditions on $\phi$ in order to have the existence of a selection having some interesting properties such as measurability or continuity.\\
To do this, the notion of continuity has to be generalized to set value mappings. One of this most useful generalization is the so called lower hemicontinuity (or semicontinuity) stated below:

\begin{definition}
    \label{hemicontinuity}
    Let $x\in X$. The map $\phi$ is lower hemicontinuous at $x$ if for every open set $U$ that meets $\phi(x)$ ($\phi(x) \cap U \neq \emptyset)$, there is a neighborhood $\voisi$ of x such that:
    \begin{equation*}
        z \in \voisi \implies \phi(z) \cap U \neq \emptyset.
    \end{equation*}
    The map $\phi$ is lower hemicontinuous on $X$ if it is lower hemicontinuous at each point of $X$.
\end{definition}

The first and most famous selection theorem is due to Michael in 1956 and lower hemicontinuity is at the center of this theorem (see theorem 17.66 p589 in \cite{infinite_dimensional_analysis}):
\begin{theorem}[Michael's selection theorem]
    A lower hemicontinuous set value map $\phi$ from a paracompact space into a Fréchet space with non empty closed convex values admits a continuous selection. 
    \label{Michael_selection}
\end{theorem}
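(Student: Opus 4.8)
The plan is to realize the selection as the uniform limit of a sequence of continuous \emph{approximate} selections, with paracompactness entering through partitions of unity and completeness of the range used to pass to the limit. I describe the Banach case for clarity; for a general Fréchet space one replaces norm balls by a neighbourhood basis of open convex balanced sets at the origin, which exists by local convexity, and fixes a compatible complete metric $d$ for the limiting argument. Write $B(y,\rho)$ for the open ball of radius $\rho$.

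First I would establish the key building block: for every $\epsilon>0$ the map $\phi$ admits a continuous $\epsilon$\emph{-approximate selection}, i.e.\ a continuous $g:X\to Y$ with $d\bigl(g(x),\phi(x)\bigr)<\epsilon$ for all $x$. For each $x$ choose $y_x\in\phi(x)$ (nonempty values); applying lower hemicontinuity (Definition \ref{hemicontinuity}) to the open set $B(y_x,\epsilon)$, which meets $\phi(x)$, yields an open neighbourhood $U_x$ of $x$ with $\phi(z)\cap B(y_x,\epsilon)\neq\emptyset$ for every $z\in U_x$. The $U_x$ cover $X$, so paracompactness furnishes a locally finite partition of unity $\{p_i\}_{i\in I}$ with $\mathrm{supp}\,p_i\subset U_{x_i}$ for suitable $x_i$. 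Then $g(x)=\sum_i p_i(x)\,y_{x_i}$ is continuous (a locally finite convex combination), and for fixed $x$ each index with $p_i(x)>0$ obeys $x\in U_{x_i}$, hence $y_{x_i}\in\phi(x)+\epsilon B(0,1)$; since $\phi(x)+\epsilon B(0,1)$ is convex, the convex combination $g(x)$ lies in it, giving $d(g(x),\phi(x))<\epsilon$.

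Next I would iterate to force convergence. Start from a continuous $2^{-1}$-approximate selection $f_1$, and suppose inductively that a continuous $2^{-n}$-approximate selection $f_n$ has been produced. Form the refined multimap $\psi_n(x)=\phi(x)\cap B\bigl(f_n(x),2^{-n}\bigr)$. Its values are convex (an intersection of convex sets) and nonempty (exactly because $f_n$ is $2^{-n}$-approximate), and, crucially, $\psi_n$ is again lower hemicontinuous: this is the general fact that the intersection of a lower hemicontinuous map with a multimap of open graph is lower hemicontinuous wherever nonempty, applied to $x\mapsto B\bigl(f_n(x),2^{-n}\bigr)$, whose graph is open by continuity of $f_n$. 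Applying the building block to $\psi_n$ with accuracy $2^{-(n+1)}$ yields a continuous $2^{-(n+1)}$-approximate selection $f_{n+1}$ of $\psi_n$, hence of $\phi$; and because $f_{n+1}(x)$ is within $2^{-(n+1)}$ of $\psi_n(x)\subset B\bigl(f_n(x),2^{-n}\bigr)$ one obtains the uniform estimate $\|f_{n+1}(x)-f_n(x)\|<2^{-n}+2^{-(n+1)}$.

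Finally I would take the limit. These increments are summable, so $(f_n)$ is uniformly Cauchy and, by completeness of $Y$, converges uniformly to a continuous $f:X\to Y$. From $d\bigl(f_n(x),\phi(x)\bigr)<2^{-n}\to 0$ together with uniform convergence one gets $d\bigl(f(x),\phi(x)\bigr)=0$, and closedness of $\phi(x)$ then forces $f(x)\in\phi(x)$ for every $x$, so $f$ is the sought continuous selection. I expect the principal difficulty to lie in the inductive step, namely verifying that the intersection multimap $\psi_n$ is both nonempty-valued and lower hemicontinuous so that the building block may be re-applied; this l.h.c.\ stability under intersection with an open-graph map is what couples successive approximate selections and makes the sequence Cauchy, whereas the partition-of-unity construction and the completeness argument are comparatively routine.
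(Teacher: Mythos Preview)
The paper does not give its own proof of this theorem: Theorem~\ref{Michael_selection} is merely stated in Appendix~\ref{selection_theory} with a citation to \cite{infinite_dimensional_analysis}, and the paper then invokes the more general variant Theorem~\ref{theorem_used} from \cite{selection_book} (also without proof) for the actual application to the multimap $T$. So there is nothing in the paper to compare your argument against.

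That said, your sketch is correct and is essentially Michael's original proof: build $\epsilon$-approximate selections by convex combinations over a locally finite partition of unity, refine iteratively by intersecting $\phi$ with the open-graph multimap $x\mapsto B\bigl(f_n(x),2^{-n}\bigr)$, and pass to the limit using completeness and closedness of the values. The point you flag as delicate---lower hemicontinuity of $\psi_n=\phi\cap B(f_n,2^{-n})$---is indeed the crux, and your justification via the ``l.h.c.\ $\cap$ open-graph is l.h.c.\ where nonempty'' lemma is the standard and correct one. The only place I would tighten is the Fr\'echet extension: rather than a single metric $d$, one typically fixes a countable separating family of seminorms $(p_k)$ and runs the approximation so that at stage $n$ the error is controlled in $p_1,\dots,p_n$ simultaneously; your phrasing is a little informal there but not wrong.
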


In our problem $X=\Rb^N\setminus \mathcal{Z}$ (paracompact because it is metrizable), $\phi=T$ and we can consider $Y=\Rb_+^*$ or $Y=\Rb$. In the first case $Y$ is not a Fréchet space and in the second there is no evidence that $T(y)$ is closed for the euclidean topology on $\Rb$. That is why a more recent and general theorem has to be used (see theorem 6.2 p.116 in \cite{selection_book} with its proof).
\begin{theorem}
    Let $\mathcal{O}$ be a nonempty open subset of a Banach space $B$. Then every \textbf{lower hemicontinuous} map $\phi:X \mapsto \mathcal{O}$ from a paracompact space $X$ with \textbf{convex, closed (in $\mathcal{O}$)} values $\phi(x)$, $x\in X$, admits a continuous selection.  
    \label{theorem_used}
\end{theorem}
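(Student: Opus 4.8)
The plan is to prove this as a variant of Michael's selection theorem (Theorem \ref{Michael_selection}), adapting its two-stage machinery to the fact that the values $\phi(x)$ are only closed \emph{in} the open set $\mathcal{O}$ rather than in the whole Banach space $B$. The backbone is unchanged: first construct continuous \emph{approximate} selections, then refine them into a uniformly Cauchy sequence whose limit is a genuine selection. The one new ingredient is a device that keeps the limit inside $\mathcal{O}$: since $\mathcal{O}$ is open in the complete space $B$, I would fix a complete metric $\rho$ compatible with its topology, for instance $\rho(u,v)=\|u-v\|+\left|\frac{1}{d(u,B\setminus\mathcal{O})}-\frac{1}{d(v,B\setminus\mathcal{O})}\right|$, which diverges near $\partial\mathcal{O}$ and so forbids sequences from accumulating on the boundary.

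The core step is the construction of a single continuous approximate selection. For a given $\varepsilon>0$, for each $x$ I pick $y_x\in\phi(x)$; by lower hemicontinuity (Definition \ref{hemicontinuity}) the set $\{z:\phi(z)\cap B(y_x,\varepsilon)\neq\emptyset\}$ is an open neighborhood of $x$, and these sets cover $X$. Paracompactness of $X$ then yields a locally finite continuous partition of unity $\{p_i\}$ subordinate to this cover, and I define $f(x)=\sum_i p_i(x)\,y_i$. Since $\phi(x)$ is convex and every active $y_i$ lies within $\varepsilon$ of $\phi(x)$, the convex combination $f(x)$ is itself within $\varepsilon$ of $\phi(x)$; by shrinking the balls so that each averaging happens inside a norm-ball contained in $\mathcal{O}$, one also guarantees $f(x)\in\mathcal{O}$.

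Iterating, I would produce continuous maps $f_n:X\to\mathcal{O}$ that are increasingly good approximate selections, obtaining $f_{n+1}$ as an approximate selection of the map $x\mapsto\phi(x)\cap B(f_n(x),r_n(x))$, whose values are convex (a norm-ball meets the convex set $\phi(x)$) and which remains lower hemicontinuous. Choosing the norm-radii $r_n(x)$ to shrink geometrically while keeping the balls inside $\mathcal{O}$ (possible since $f_n(x)\in\mathcal{O}$ and $\mathcal{O}$ is open) makes the sequence Cauchy for $\rho$; completeness of $\rho$ then gives a continuous limit $s:X\to\mathcal{O}$ with values genuinely in $\mathcal{O}$. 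For each $x$ one has $s(x)\in\mathcal{O}$ and $d(s(x),\phi(x))=0$; because $\phi(x)$ is closed in $\mathcal{O}$ this forces $s(x)\in\phi(x)$, so $s$ is the sought continuous selection.

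The main obstacle, and the only genuine departure from the classical statement, is precisely this last point: the convex averaging lives in the linear structure of $B$ and a convex combination of points of the possibly non-convex set $\mathcal{O}$ need not return to $\mathcal{O}$, while the closedness hypothesis only delivers membership once the limit is known to lie in $\mathcal{O}$. I resolve this by performing every convex combination inside small norm-balls contained in $\mathcal{O}$ and by driving the whole induction in the complete metric $\rho$, whose blow-up at $\partial\mathcal{O}$ prevents the iterates from converging to a boundary point.
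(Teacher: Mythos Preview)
The paper does not actually prove Theorem \ref{theorem_used}; it quotes it verbatim from \cite{selection_book} (Theorem 6.2, p.~116) and explicitly refers the reader there ``with its proof''. So there is no in-paper argument to compare against.

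Your sketch is a faithful outline of how such a result is obtained, and it is essentially the route taken in the cited reference: one reruns Michael's two-stage construction (approximate selections via partitions of unity, then an iterative refinement producing a uniformly Cauchy sequence), with the single modification that the iteration is carried out in a complete metric on $\mathcal{O}$ that is equivalent to the norm topology but blows up at $\partial\mathcal{O}$. Your choice of $\rho$ does exactly this, and the final step --- $s(x)\in\mathcal{O}$ together with $d(s(x),\phi(x))=0$ and $\phi(x)$ closed in $\mathcal{O}$ forces $s(x)\in\phi(x)$ --- is the right way to exploit the relative-closedness hypothesis.

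The one place where your sketch is thinner than it needs to be is the claim that the convex combination $\sum_i p_i(x)\,y_i$ can be arranged to fall in $\mathcal{O}$. As you yourself flag, $\mathcal{O}$ need not be convex, so an arbitrary convex combination of points of $\mathcal{O}$ can leave it. Saying ``shrink the balls so that each averaging happens inside a norm-ball contained in $\mathcal{O}$'' is the right idea, but it requires more than shrinking $\varepsilon$: you must also refine the cover (e.g.\ pass to a star-refinement, available by paracompactness) so that at each $x$ the finitely many active $y_i$ are \emph{mutually} close, not merely individually close to $\phi(x)$. With that refinement the convex hull of the active $y_i$ sits in a small ball around some point of $\phi(x)\subset\mathcal{O}$ and hence in $\mathcal{O}$; without it the argument has a genuine gap. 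Once this is patched, the inductive step with $x$-dependent continuous radii $r_n(x)$ (taken, say, at most half the norm-distance from $f_n(x)$ to $B\setminus\mathcal{O}$) goes through and yields the $\rho$-Cauchy sequence you want.
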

In our case $B=\Rb$ is a Banach space and $\mathcal{O}=\Rb_+^*$ is an open space of $\Rb$. This theorem is adapted to the problem of section \ref{limit_points}. 

\section{KL functions}

In this section, let us recall definitions and main results about Kurdyka-Lojasiewicz (KL) functions that are widely used in non-convex optimization. 
KL inequality, stated below, {gives insights on} the behavior of a function around its critical points.

\begin{definition}[KL]
    We say that a differentiable function $g: \mathbb{R}^m \mapsto \mathbb{R}$ is KL at a critical point $y^*\in\critic_g$ if there exists $\gamma>0$, $\voisi$ a neighborhood of $y^*$ and a continuous concave function $\phi: [0,\gamma] \mapsto [0,+\infty[$ such that:
    \begin{enumerate}
        \item $\phi(0)=0$, $\phi \in \mathcal{C}^1(]0,\gamma[)$ and $\phi'>0$ on $]0,\gamma[$.
        \item For all $y \in \mathcal{U}_{y^*} \coloneqq \voisi \cap \{ y\in \Rb^m, g(y^*)<g(y)<g(y^*)+\gamma \}$:
        \begin{equation*}
            \phi'\left(g(y)-g(y^*)\right) \|\nabla g(y)\| \geq 1.
        \end{equation*}
    \end{enumerate}
    \label{KL_inequality}
\end{definition}

\begin{remarknot}
    We will omit to mention the point $y^*$ for $\mathcal{U}_{y^*}$ when it is clear denoting it simply by $\mathcal{U}$. 
\end{remarknot}

A particular case of this inequality which is the most useful in practice is called simply Lojasiewicz inequality, see \cite{Loj1} and \cite{Loj2}:

\begin{definition}[Lojasiewicz]
     We say that a differentiable function $g: \Rb^m \mapsto \Rb$ satisfies Lojasiewicz inequality at a critical point $y^*\in\critic_g$ if there are $c>0$, $\sigma>0$ and $0 < \alpha \leq 1$ such that:
    \begin{equation*}
        \|y-y^*\|<\sigma \Rightarrow \|\nabla g(y)\| \geq c \|g(y)-g(y^*)\|^{1-\alpha}.
        \label{loj_condition}
    \end{equation*}
\end{definition}

\begin{remarknot}
    The Lojasiewicz inequality is a particular case where $\phi(x)=c\frac{x^{\alpha}}{\alpha}$.
\end{remarknot}

The fundamental theorem due to Lojasiewicz \cite{Lojasiewicz_gradient}, that justifies the crucial role of this class of functions, says that analytic functions satisfy Lojasiewicz inequality at the neighborhood of all their critical points:

\begin{theorem}[Lojasiewicz]
    Let $g: \Rb^m \mapsto \Rb$ be an analytic function. Then for all critical point $y^*\in\critic_g$ of $g$, there are $c>0$, $\sigma>0$ and $0 < \alpha \leq \frac{1}{2}$ such that:
    \begin{equation*}
        \|y-y^*\|<\sigma \Rightarrow \|\nabla g(y)\| \geq c \|g(y)-g(y^*)\|^{1-\alpha}.
    \end{equation*}
    \label{loj_theorem}
\end{theorem}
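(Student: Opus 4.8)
The statement is the classical Łojasiewicz gradient inequality, and since $g$ is analytic its proof must draw on the theory of semianalytic/subanalytic sets; the plan is to reduce everything to a single quantitative fact about the behaviour of $\|\nabla g\|$ on level sets and then extract that fact from the tameness of subanalytic geometry. First I would normalize: translating, assume $y^*=0$ and $g(y^*)=0$, and observe that it suffices to treat the two regions $\{g>0\}$ and $\{g<0\}$ separately (the latter by applying the result to $-g$), while on $\{g=0\}$ the inequality is vacuous. I would also note that it is enough to produce \emph{some} exponent $\theta\in(0,1)$ with $\|\nabla g(y)\|\ge c\,|g(y)|^{\theta}$ near $0$: because $|g|\to 0$ near $0$, once the inequality holds for one $\theta$ it holds for every larger exponent in $[\theta,1)$ with the same constant (as $|g|^{\theta'}\le|g|^{\theta}$ when $|g|<1$ and $\theta'\ge\theta$), so the refinement $1-\alpha=\theta\ge\tfrac12$, i.e. $\alpha\le\tfrac12$, comes for free. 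Thus the whole problem is to show the exponent can be taken strictly below $1$ in, say, the region $\{g>0\}$.

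To that end I would introduce the profile function $\nu(s)=\inf\{\|\nabla g(y)\|:\ \|y\|<\sigma,\ g(y)=s\}$ for small $s>0$. After shrinking $\sigma$, Sard's theorem (in its subanalytic form) guarantees no critical values in some interval $(0,s_0)$, so $\nu(s)>0$ there. The crucial structural fact is that $\nu$ is a subanalytic function of the single variable $s$: it is an infimum over the subanalytic family of fibres $\{g=s\}\cap B_\sigma$ of the subanalytic function $y\mapsto\|\nabla g(y)\|$, and such infima are again subanalytic. By the structure theorem for one-variable subanalytic functions, $\nu$ admits a Puiseux-type asymptotic $\nu(s)=c\,s^{\theta}(1+o(1))$ as $s\to0^+$, for some rational $\theta\ge 0$ and $c>0$. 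If $\theta<1$, then $\nu(s)\ge\tfrac{c}{2}s^{\theta}$ for small $s$, which is exactly $\|\nabla g(y)\|\ge\tfrac{c}{2}|g(y)|^{\theta}$ on every fibre, and the proof is finished.

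The heart of the argument is therefore to rule out $\theta\ge 1$, and here I would use the \emph{talweg} (valley) curve together with the finite-length property of subanalytic arcs. By a subanalytic selection argument I would pick a bounded subanalytic arc $s\mapsto x(s)\in B_\sigma$, $s\in(0,s_0)$, with $g(x(s))=s$ and $\|\nabla g(x(s))\|\le 2\nu(s)$. Differentiating $g(x(s))=s$ gives $\langle\nabla g(x(s)),\dot x(s)\rangle=1$, whence $\|\dot x(s)\|\ge 1/\|\nabla g(x(s))\|\ge 1/(2\nu(s))$, so the length of the arc is at least $\int_0^{s_0}\frac{ds}{2\nu(s)}$. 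If $\theta\ge 1$ this integral diverges, forcing the arc to have infinite length; but a bounded subanalytic set of dimension one has finite length, a contradiction. Hence $\theta<1$, and combining the two sign-regions completes the proof.

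The main obstacle is not the analysis above, which is short, but the three nontrivial inputs from subanalytic geometry on which it rests: (i) that $\nu$ is subanalytic and hence has a definite power-law rate $c\,s^{\theta}$ (the one-variable structure/Puiseux theorem); (ii) the existence of the subanalytic talweg arc; and (iii) the local finiteness of length of bounded subanalytic curves. These trace back to Hironaka's and Gabrielov's theorems on subanalytic sets (or, in Łojasiewicz's original route, to resolution of singularities and the curve selection lemma). In a self-contained write-up I would either invoke \cite{Lojasiewicz_gradient} for these facts or, if an elementary proof is wanted, replace (i)--(iii) by a direct curve-selection argument: choose an analytic arc $\gamma$ with $\gamma(0)=0$ on which $\|\nabla g\|<\sqrt{c}\,|g|^{\theta}$, set $h=g\circ\gamma\sim a t^{p}$, and compare $h'(t)=\langle\nabla g(\gamma(t)),\gamma'(t)\rangle$ with $|g|^{\theta}$ to force $\theta\le 1-1/p<1$; the delicate point there is the a-priori dependence of the Puiseux order $p$ on the chosen exponent $\theta$, which is precisely what the profile-function formulation sidesteps.
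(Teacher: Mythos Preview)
The paper does not prove this theorem. It is stated in Appendix~B purely as background, attributed to \L ojasiewicz via the citation \cite{Lojasiewicz_gradient}, and no argument is given; the paper only uses it to motivate the class of KL functions. So there is no ``paper's own proof'' to compare your proposal against.

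That said, your sketch is a recognisable and essentially correct route to the inequality --- closer to Kurdyka's reformulation than to \L ojasiewicz's original proof via resolution of singularities. The reduction to showing some exponent $\theta<1$ works, the profile function $\nu$, its subanalyticity and one-variable Puiseux asymptotic, and the talweg/finite-length contradiction are all standard ingredients, and your observation that $\alpha\le\tfrac12$ then comes for free (by enlarging $\theta$ to $\max(\theta,\tfrac12)$, since $|g|^{\theta'}\le|g|^{\theta}$ when $|g|<1$ and $\theta'\ge\theta$) is correct. As you yourself note, the real weight lies in the three subanalytic-geometry inputs (i)--(iii), each a substantial theorem; citing \cite{Lojasiewicz_gradient} for them is exactly what the paper does for the whole statement. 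One small technical caution: defining $\nu(s)$ as an infimum over the \emph{open} ball $\{\|y\|<\sigma\}$ can let the infimum drop to zero along fibres escaping to the boundary and complicates the selection of the talweg arc; taking the infimum over a closed ball (and shrinking $\sigma$) avoids this and keeps the selection argument clean.
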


\section{Proof of the convergence theorem}
\label{proof_cv}

Here we will prove the abstract convergence result called theorem \ref{abstract_cv}. 
Let us begin by stating a corollary of the lemma \ref{bihari_lemma} for power functions, useful when dealing with the Lojasiewicz inequality.

\begin{lemma}[Gronwall inequality for powers]
    Let $(u_n){_n\in\mathbb{N}}, (v_n){_n\in\mathbb{N}}$ be positive sequences, $0 \leq \alpha$ such that for all $n\geq 0$:
    \begin{equation*}
        u_{n+1}-u_n \leq -v_n u_n^{\alpha}.
    \end{equation*}
    Then:
    \begin{itemize}
        \item If $\alpha>1$:
        \begin{equation*}
            u_n \leq \dfrac{1}{\left[ u_0^{1-\alpha} +(\alpha-1) \displaystyle{\sum_{k=0}^{n-1}} v_k \right]^{\frac{1}{\alpha-1}}}.
        \end{equation*}
        \item If $\alpha=1$:
        \begin{equation*}
        u_n \leq u_0 \exp{\left( -\displaystyle{\sum_{k=0}^{n-1}} v_k \right)}.
        \end{equation*}
        \item Finally, if $\alpha<1$ and the sum of $(v_k)_{k\in \mathbb{N}}$ diverges, $u_n=0$ for $n$ satisfying:
        \begin{equation*}
            \displaystyle{\sum_{k=0}^{n-1}} v_k \geq \frac{u_0^{1-\alpha}}{1-\alpha}.
        \end{equation*}
    \end{itemize}
    \label{gronwall_lemma}
\end{lemma}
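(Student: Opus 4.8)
The plan is to read this lemma as a direct specialization of the Non-Linear Gronwall Lemma \ref{bihari_lemma} with the choice $\psi(x)=x^{\alpha}$, and then to compute explicitly the primitive $\Psi$ of $1/\psi$ together with its inverse $\Psi^{-1}$ in each of the three regimes of $\alpha$. The only genuine subtlety is that $\psi(x)=x^{\alpha}$ vanishes at $x=0$, so $\psi$ is strictly positive and increasing only on $]0,+\infty[$; I therefore first dispose of the degenerate situation in which the sequence hits zero. Concretely, if $u_N=0$ for some $N$, then $u_{N+1}-u_N\leq -v_N u_N^{\alpha}=0$ combined with the positivity of the sequence forces $u_{N+1}=0$, and by induction $u_n=0$ for all $n\geq N$; one checks the claimed bounds hold trivially in this case (this also covers $u_0=0$). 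Hence I may assume $u_n>0$ for all $n$ and apply Lemma \ref{bihari_lemma} on the domain $]0,+\infty[$, where $\psi$ meets all the hypotheses and every iterate $u_n$ lives.

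For $\alpha>1$ I would take $\Psi(x)=\frac{x^{1-\alpha}}{1-\alpha}=-\frac{1}{(\alpha-1)x^{\alpha-1}}$, which increases from $]0,+\infty[$ onto $]-\infty,0[$. Writing $S_n\coloneqq\sum_{k=0}^{n-1}v_k\geq 0$, the argument $\Psi(u_0)-S_n$ stays strictly negative, so $\Psi^{-1}$ is well defined there; inverting $\Psi$ and using the identity $(\alpha-1)\bigl(S_n-\Psi(u_0)\bigr)=u_0^{1-\alpha}+(\alpha-1)S_n$ reproduces exactly the stated power bound. For $\alpha=1$ the computation is immediate with $\Psi=\ln$ and $\Psi^{-1}=\exp$, yielding $u_n\leq u_0\,e^{-S_n}$.

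The case $\alpha<1$ is where the vanishing of $\psi$ at the origin becomes essential, and it is the main obstacle. Here $\Psi(x)=\frac{x^{1-\alpha}}{1-\alpha}$ maps $]0,+\infty[$ increasingly onto $]0,+\infty[$, so the telescoping argument of Lemma \ref{bihari_lemma} gives $\Psi(u_n)\leq\Psi(u_0)-S_n$ as long as all the $u_k$ remain positive. Since $\Psi(u_n)=\frac{u_n^{1-\alpha}}{1-\alpha}>0$, this inequality becomes impossible as soon as $S_n\geq\Psi(u_0)=\frac{u_0^{1-\alpha}}{1-\alpha}$. I would therefore fix such an $n$: if some earlier iterate already vanished, then $u_n=0$ by the propagation argument of the first paragraph; otherwise $u_k>0$ for all $k\leq n$, the telescoped inequality applies and contradicts $\Psi(u_n)>0$. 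In either case $u_n=0$, which is precisely the finite-time statement, and the divergence of $\sum_k v_k$ guarantees that the threshold $S_n\geq\frac{u_0^{1-\alpha}}{1-\alpha}$ is eventually attained.
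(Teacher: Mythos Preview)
Your proposal is correct and follows exactly the route the paper intends: the paper does not spell out a proof for this lemma but presents it as ``a corollary of the lemma~\ref{bihari_lemma} for power functions'', i.e., the specialization $\psi(x)=x^{\alpha}$ together with the explicit computation of $\Psi$ and $\Psi^{-1}$ in each regime. Your treatment is in fact more careful than the paper's sketch, since you handle the degeneracy $\psi(0)=0$ (which prevents a naive application of Lemma~\ref{bihari_lemma}) by first propagating any zero iterate and then working on $]0,+\infty[$; this is the right fix, and the contradiction argument in the case $\alpha<1$ is the natural way to extract the finite-time conclusion from the telescoped inequality.
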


The proof of the convergence Theorem \ref{abstract_cv} will follow the same steps than the GD particular case (Theorem \ref{estimate_theta}) but the existence of an accumulation point comes from Theorem \ref{LCR_accumulation} and \ref{LCM_accumulation} rather than proposition \ref{LCEGD_accumulation}.

\begin{proof}[Convergence Theorem]
    The sequence $(y_n)_{n\in \mathbb{N}}$ is bounded and admits an accumulation point $y^* \in \mathcal{Z}$ according to theorems \ref{LCR_accumulation} and \ref{LCM_accumulation}. The goal is to prove that $y_n \to y^*$.\\
    The sequence $(V(y_n))_{n\in\mathbb{N}}$ is decreasing and bounded by below by 0. So it converges to some real $l$. Without loss of generality assume that $l=V(y^*)=0$. If the sequence $(y_n)_{n\in\mathbb{N}}$ is eventually constant then the result is straightforward. Otherwise remove all the indices such that $y_{n+1}=y_n$. \\
    Now note that:
    \begin{equation*}
        V(y_{n+1})=V(y_n) \Rightarrow y_{n+1}=y_n.
    \end{equation*}
    Indeed, we have $V(y_{n+1})-V(y_n)\leq \lambda \eta_n \dot{V}(y_n) \leq 0$. As $V(y_{n+1})=V(y_n)$ it follows that $\eta_n\dot{V}(y_n)=0$. As $\eta_n$ is strictly positive it comes: $\dot{V}(y_n)=0$.  Using condition i) we can deduce that either $\nabla V(y_n)=0$ or $F(y_n)=0$. But $\nabla V=0 \Rightarrow F=0$, then $y_{n+1}=y_n$.
    As a result $V(y_n)$ is strictly decreasing and $V(y_n)>0$.\\
    
    Provided $y_n \in \mathcal{U}$ assumptions i) and ii) together with the dissipation condition gives (as $\gamma\geq 0$, $x\mapsto x^{\gamma}$ is increasing):
    \begin{equation*}
        V(y_n)-V(y_{n+1}) \geq \lambda c \|\nabla V(y_n)\| \|y_{n+1}-y_n\| \geq \lambda c c_1^{\gamma}\|y_{n+1}-y_n\| V(y_n)^{\gamma(1-\alpha_1)}.
    \end{equation*}
    Given that $V(y_n)>0$:
    \begin{equation*}
        \|y_{n+1}-y_n\| \leq \dfrac{V(y_n)-V(y_{n+1})}{\lambda c c_1^{\gamma}V(y_n)^{\gamma(1-\alpha_1)}}.
    \end{equation*}
    Since $\forall x \in \left[ V(y_{n+1}),V(y_n) \right]$, $V(y_n)^{-\gamma (1-\alpha)} \leq x^{-\gamma (1-\alpha_1)} \leq V(y_{n+1})^{-\gamma (1-\alpha_1)}$ (because $\alpha_1<1$ and $\gamma \geq 0$), we deduce:
    \begin{multline*}
    \dfrac{V(y_n)-V(y_{n+1})}{V(y_n)^{\gamma (1-\alpha_1)}} = 
    \displaystyle{\int_{V(y_{n+1})}^{V(y_n)}} \dfrac{dx}{V(y_n)^{\gamma (1-\alpha_1)}} \leq\displaystyle{\int_{V(y_{n+1})}^{V(y_n)}} \dfrac{dx}{x^{\gamma (1-\alpha_1)}} \\
    = \dfrac{1}{1-\gamma(1-\alpha_1)} \left[V(y_n)^{1-\gamma (1-\alpha_1)}- V(y_{n+1})^{1-\gamma (1-\alpha_1)}\right].
    \end{multline*}
    Since $\gamma(1-\alpha_1)\neq 1$. Provided $y_n \in \mathcal{U}$, we have:
    \begin{equation*}
        \|y_{n+1}-y_n\| \leq C \left[V(y_n)^{1-\gamma (1-\alpha_1)}- V(y_{n+1})^{1-\gamma (1-\alpha_1)}\right].
    \end{equation*}
    where $C$ is the constant defined in the theorem.
Given that $p>q$ such that $y_p, \cdots, y_{q-1} \in \mathcal{U}$, it follows:
\begin{equation*}
    \displaystyle{\sum_{n=p}^{q-1}}\|y_{n+1}-y_n\| \leq C \left[V(y_p)^{1-\gamma (1-\alpha_1)}- V(y_q)^{1-\gamma (1-\alpha_1)}\right].
\end{equation*}
Let $r>0$ be such that $B_r(y^*) \subset \mathcal{U}$. Given that $y^*$ is a accumulation point of $y_n$ and $V(y_n)^{1-\gamma (1-\alpha_1)}$ converges to $0$ since $\gamma(1-\alpha_1)<1$, it exists $n_0$ such that:
\begin{equation*}
    \begin{array}{lll}
    \|y_{n_0}-y^*\|<\frac{r}{2}, \\\\
    \forall q \geq n_0: C \left[V(y_{n_0})^{1-\gamma (1-\alpha_1)}- V(y_q)^{1-\gamma (1-\alpha_1)}\right] < \frac{r}{2}.
    \end{array}
\end{equation*}

Let us show that $y_n \in B_r(y^*)$ for all $n>n_0$. By contradiction assume that it is not the case: $\exists n>n_0, y_n \notin B_r(y^*)$. The set $\left\{ n>n_0, y_n \notin B_r(y^*)\right\}$ is a non empty bounded by below part of $\mathbb{N}$. So we can consider the minimum of this set that we denote by $p$. As a result, $\forall n_0 \leq n < p$, $y_n \in \mathcal{U}$ so:
\begin{equation*}
    \displaystyle{\sum_{n=n_0}^{p-1}}\|y_{n+1}-y_n\| \leq C \left[V(y_{n_0})^{1-\gamma (1-\alpha_1)}- V(y_q)^{1-\gamma (1-\alpha_1)}\right] < \frac{r}{2}. 
\end{equation*}
This implies:
\begin{equation*}
    \|y_p-y^*\| \leq \displaystyle{\sum_{n=n_0}^{p-1}}\|y_{n+1}-y_n\| + \|y_{n_0}-y^*\| < r.
    \label{decomposition_y}
\end{equation*}
This is a contradiction because $\|y_p-y^*\| \geq r$. As $r$ is arbitrary small this shows the convergence.\\

Now, for $n\geq n_0$:
\begin{multline}
    \label{V_Gronwall}
    \|y_n-y^*\| = \|\displaystyle{\sum_{k=n}^{+\infty}} (y_k-y_{k+1})\| \leq \displaystyle{\sum_{k=n}^{+\infty}} \|y_{k+1}-y_k\| \\ 
    \leq C \displaystyle{\lim_{q \to +\infty}} \left[V(y_{n})^{1-\gamma (1-\alpha_1)}- V(y_q)^{1-\gamma (1-\alpha_1)}\right]
    = C V(y_n)^{1-\gamma (1-\alpha_1)}.
\end{multline}
Combining the inequalities (i) and \eqref{condition_vitesse} with the dissipation inequality \eqref{dissipation_inequality} leads, for $n\geq n_0$, to:
    \begin{multline*}
      V(y_{n+1}) - V(y_n) \leq \lambda \eta_n \dot{V}(y_n) \leq -\lambda c c_1^{\gamma} \eta_n \|F(y_n)\| V(y_n)^{\gamma(1-\alpha_1)} \\ 
      \leq -\lambda c c_1^{\gamma}c_2 \eta_n V(y_n)^{\gamma(1-\alpha_1)+1-\alpha_2}.
    \end{multline*}
    Applying the lemma \ref{gronwall_lemma} to the last inequality gives an estimation of $V(y_n)$. Combining this estimation with the inequality \eqref{V_Gronwall} ($x\mapsto x^{1-\gamma(1-\alpha_1)}$ is increasing) gives the expected convergence rates on $\|\theta_n-\theta^*\|$.
\end{proof}

\end{appendices}

\clearpage

%%===========================================================================================%%
%% If you are submitting to one of the Nature Portfolio journals, using the eJP submission   %%
%% system, please include the references within the manuscript file itself. You may do this  %%
%% by copying the reference list from your .bbl file, paste it into the main manuscript .tex %%
%% file, and delete the associated \verb+\bibliography+ commands.                            %%
%%===========================================================================================%%

\bibliography{sn-bibliography}% common bib file
%% if required, the content of .bbl file can be included here once bbl is generated
%%\input sn-article.bbl

\end{document}